\documentclass[11pt,twoside]{amsart}
\usepackage{amsxtra}
\usepackage{color}
\usepackage{amsopn}
\usepackage{amsmath,amsthm,amssymb,arydshln}
\usepackage{mathrsfs,mathtools}
\usepackage{stmaryrd}
\usepackage{hyperref}
\usepackage{enumerate}
\usepackage{xcolor}
\usepackage{pifont}
\usepackage{adjustbox}

\newtheorem{theorem}{Theorem}[section]
\newtheorem{corollary}[theorem]{Corollary}
\newtheorem{proposition}[theorem]{Proposition}
\newtheorem{lemma}[theorem]{Lemma}
\newtheorem*{theorem*}{Theorem}

\theoremstyle{definition}
\newtheorem{definition}[theorem]{Definition}

\newtheorem{remark}[theorem]{Remark}

\makeatletter
\@namedef{subjclassname@2020}{%
  \textup{2020} Mathematics Subject Classification}
\makeatother

\newcommand{\R}{{\mathbb R}}

\newcommand{\C}{{\mathbb C}}

\newcommand{\psip}{\psi^{\sst+}}
\newcommand{\psim}{\psi^{\sst-}}
\newcommand{\Wd}{w_2^{\sst-}}
\newcommand{\Wo}{w_1}
\newcommand{\f}{\varphi}
\newcommand{\fz}{\varphi_{\sst0}}
\newcommand{\gz}{g_{\sst0}}
\newcommand{\Fz}{F_{\sst0}}
\newcommand{\Jz}{J_{\sst0}}


\newcommand{\frg}{\mathfrak{g}}

\newcommand{\fre}{\mathfrak{e}}

\newcommand{\frsu}{\mathfrak{su}}

\newcommand{\U}{{\mathrm U}}
\newcommand{\SU}{{\mathrm{SU}}}

\newcommand{\SO}{{\mathrm {SO}}}

\newcommand{\GL}{{\mathrm {GL}}}

\newcommand{\G}{{\mathrm G}}
\newcommand{\K}{{\mathrm K}}

\newcommand{\SL}{{\mathrm {SL}}}

\newcommand{\ddt}{\frac{\partial}{\partial t}}

\newcommand{\W}{\wedge}

\newcommand{\Sym}{\mathrm{Sym}}
\DeclareMathOperator\tr{tr}

\DeclareMathOperator\End{End}

\DeclareMathOperator\vol{vol}


\newcommand{\Ric}{{\rm Ric}}

\newcommand{\Scal}{{\rm Scal}}




\newcommand{\st}{\ |\ }

\newcommand{\diag}{{\rm diag}}

\newcommand{\sst}{\scriptscriptstyle}
\newcommand{\td}{~}
\newcommand{\te}{\tilde{e}}

\textheight=8in
\textwidth=6in
\oddsidemargin=0.25in
\evensidemargin=0.25in

\numberwithin{equation}{section}

\title{Special solutions to the type IIA flow}

\author{Alberto Raffero}
\address{Dipartimento di Matematica ``G. Peano'' \\ Universit\`a degli Studi di Torino\\ Via Carlo Alberto 10\\10123 Torino\\ Italy}
\email{alberto.raffero@unito.it}

\subjclass[2020]{53C15, 53E50, 53D05, 53C25}
\keywords{Type IIA geometry; symplectic half-flat SU(3)-structure; Hermitian Ricci tensor}

\begin{document}
\begin{abstract}
We consider the source-free Type IIA flow introduced by Fei-Phong-Picard-Zhang \cite{FPPZ}, and we study it 
in the case where the relevant geometric datum is a symplectic half-flat SU(3)-structure. 
We show the existence of ancient, immortal and eternal solutions to the flow, provided that the initial symplectic half-flat structure satisfies suitable properties. 
In particular, we prove that the solution starting at a symplectic half-flat structure with Hermitian Ricci tensor is ancient and evolves self-similarly by scaling the initial datum. 
These results apply to all known (locally) homogeneous spaces admitting invariant symplectic half-flat SU(3)-structures.
\end{abstract}
\maketitle


\section{Introduction}
In recent years, various geometric flows have been introduced as potential tools to solve the non-linear partial differential equations 
related to supersymmetric compactifications of string theories \cite{FPPZ1,FPPZ,FPPZ2,Pho,PPZ,PPZ2}. 
In this paper, we focus on the {\em Type IIA flow} introduced by Fei-Phong-Picard-Zhang in \cite{FPPZ} to study the system of equations for the Type IIA string 
considered in \cite{TsYa} and motivated by \cite{GMPT,Tom}.

Let $(M,\omega)$ be a compact symplectic 6-manifold and let $\f$ be a 3-form on $M$ that is primitive with respect to $\omega$ and non-degenerate according to the definition of \cite{Hit}. 
Then, $\f$ gives rise to an almost complex structure $J_\f$ on $M.$ 
Moreover, the symplectic form $\omega$ is of type $(1,1)$ with respect to $J_\f$, and thus the tensor $g_\f\coloneqq \omega(\cdot,J_\f\cdot)$ is symmetric. 
If $g_\f$ is positive definite, $\f$ is said to be {\em positive}. 
According to \cite{FPPZ}, the pair $(\omega,\f)$ is called a {\em Type IIA geometry} if the  3-form $\f$ is closed, primitive and positive. 

A Type IIA geometry $(\omega,\f)$ gives rise to an almost K\"ahler structure $(\omega,J_\f,g_\f)$ and to a nowhere vanishing complex volume form $\Phi \coloneqq \f + i \, J_\f\f$ on $M.$ 
In particular, $M$ is a symplectic 6-manifold with trivial canonical bundle, and thus a {\em symplectic Calabi-Yau} manifold according to \cite{FiPa1,FiPa2}. 

A solution to the {\em Type IIA equation} is a Type IIA geometry $(\omega,\f)$ solving the non-linear partial differential equation
\begin{equation}\label{TypeIIAEquation}
dJ_\f \, d^* \left(|\f|^2\,\f \right) = \rho_A,  
\end{equation}
where $|\f|$ denotes the norm of $\f$ with respect to $g_\f$, $d^*$ is the formal adjoint of $d$, and the {\em source term} 
$\rho_A$ is the Poincar\'e dual of a given finite linear combination of special Lagrangians calibrated by $\f$. 
Notice that \eqref{TypeIIAEquation} is slightly different from the original equation considered in \cite{FPPZ,TsYa}, but they are equivalent as soon as 
the pair $(\omega,\f)$ defines a Type IIA geometry, see \cite[Sect.~3.2]{FPPZ}. 

In order to study the equation \eqref{TypeIIAEquation}, in \cite{FPPZ} the authors introduced a geometric flow evolving the 3-form $\fz$ of a given Type IIA geometry $(\omega,\fz)$ as follows
\begin{equation}\label{TypeIIAFlowIntro}
\begin{cases}
\frac{\partial}{\partial t} \f(t) =  dJ_{\f(t)}d^{*_t}\left(|\f(t)|_{g(t)}^2\f(t) \right) - \rho_A(t),  \\
\f(0) = \f_{\sst0}.
\end{cases}
\end{equation}
This flow is called the {\em Type IIA flow} and its stationary points are solutions to the Type IIA equation \eqref{TypeIIAEquation}.  

Assuming the source term $\rho_A(t)$ in \eqref{TypeIIAFlowIntro} to be identically zero, one obtains the so-called {\em source-free Type IIA flow}. 
By \cite{FPPZ}, this flow is well-posed on compact manifolds, and the solution $\f(t)$ remains closed, primitive and positive as long as it exists. 
Moreover, its stationary points are given by Type IIA geometries $(\omega,\f)$ such that the corresponding almost Hermitian structure $(\omega,J_\f,g_\f)$ is K\"ahler Ricci-flat. 
We refer the reader to \cite{FPPZ,FPPZ2} for further results on Type IIA geometry and on the Type IIA flow. 

A Type IIA geometry $(\omega,\f)$ gives rise to a symplectic SU(3)-structure which is defined by the almost K\"ahler structure $(\omega,J_\f,g_\f)$ together with 
the complex volume form of constant norm $\Psi = \frac{2}{|\f|}\,\Phi$. 
In particular, the whole structure is determined by $\omega$ and by the real 3-form $\psip\coloneqq \mathrm{Re}(\Psi)$, as the latter is primitive, positive and induces the same almost complex structure as $\f$. 
Using known results on SU(3)-structures \cite{BeVe,ChSa}, it is then possible to characterize certain properties of the Type IIA geometry $(\omega,\f)$, see Proposition \ref{TypeIIATypes}. 

When $\f$ has constant norm, the 3-form $\psip$ is closed, and the SU(3)-structure $(\omega,\psip)$ is {\em symplectic half-flat} (SHF for short). 
Known examples of compact 6-manifolds admitting SHF structures include the 6-torus  \cite{DeBTom,ToVe} and certain nilmanifolds and solvmanifolds \cite{CoTo,DeBTom0,FMOU, ToVe}, 
namely compact quotients of simply connected nilpotent or solvable Lie groups by cocompact discrete subgroups.  
In this last case, the examples are locally homogeneous. 
Further non-compact homogeneous and cohomogeneity one examples are given in \cite{PoRa1,PoRa2}. 
More details can be found in Section \ref{SpecialSHFSection}. 

In \cite{FiRa}, we observed that on all nilmanifolds and solvmanifolds admitting SHF structures there exists a SHF structure $(\omega,\psip)$ satisfying some distinguished properties. 
In detail, its {\em torsion form} $\Wd \coloneqq d^*\psip$ satisfies the conditions
\begin{equation}\label{SpecialSHFIntro}
\Delta_g\Wd = c\,\Wd,\qquad d\Wd\W\Wd = 0,\qquad |d\Wd|^2 = c\,|\Wd|^2,
\end{equation}
where $\Delta_g = dd^*+d^*d$ is the Hodge Laplacian of $g$, and $c$ is a real constant. 
Here, we shall call a SHF structure satisfying \eqref{SpecialSHFIntro} {\em special}. 
In Section \ref{SpecialSHFSection}, we discuss the conditions \eqref{SpecialSHFIntro} in detail. 
In particular, we show that the third condition follows from the first one under suitable assumptions on $(M,\omega,\psip)$, see Proposition \ref{specialrelations}. 
Moreover, in Proposition \ref{boundc} we prove that $c\geq \frac14 |\Wd|^2$ and that it attains the minimum value if and only if the SHF structure has Hermitian Ricci tensor. 
A few examples of SHF structures satisfying this remarkable curvature constraint are known, see \cite{PoRa1}. 

In Proposition \ref{autexact}, we show that a compact 6-manifold with a SHF structure $(\omega,\psip)$ has finite automorphism group whenever the 3-form $\psip$ is exact. 
This gives new information on the automorphism group of a compact SHF manifold in addition to the results obtained in \cite{PoRa2}. 
Moreover, it holds for SHF structures with Hermitian Ricci tensor, and suggests that symmetry techniques might not be helpful in the search of new compact examples. 

Some consequences of the second and third condition in \eqref{SpecialSHFIntro} are discussed in Section \ref{LemmaSect}. 

In \cite{FiRa}, we were interested in studying a coupled flow for SU(3)-structures related to Bryant's G$_2$-Laplacian flow \cite{Bry}. 
The conditions \eqref{SpecialSHFIntro} turned out to be useful to describe the solution to the flow starting at a SHF structure satisfying them. 
Motivated by this result, in Section \ref{FlowSect} we investigate the source-free Type IIA flow starting at a Type IIA geometry $(\omega,\fz)$ whose corresponding 
SU(3)-structure $(\omega,\psip_{\sst0})$ is SHF and special.  

We first consider the case where the Ricci tensor of the metric $g_{\sst0}$ induced by $(\omega,\fz)$ is Hermitian with respect to the almost complex structure $J_{\f_0}$, 
and we show that the solution to the flow starting at $\fz$  is ancient and it evolves self-similarly by scaling the initial datum $\fz$ (see Theorem \ref{JHermFlow}).  

We then focus on the general case where $(\omega,\psip_{\sst0})$ is SHF and special. 
Assuming $(M,\omega,\fz)$ to be locally homogeneous, so that the norm of the torsion form $\Wd = d^{*_0}\psip_{\sst0}$ is constant, and that a suitable property holds, 
in Theorem \ref{SolThm} we show that a solution to the flow starting at $\fz$ is given by the 3-form 
\[
\f(t) = \fz + \frac{a(t)}{|\fz|_{\sst0}}\, \Delta_{g_0}\fz, 
\]
where $|\fz|_{\sst0}$ is the (constant) $g_{\sst0}$-norm   of $\fz$, and $a(t)$ is a real valued function solving the initial value problem
\[
\left\{
\begin{split}
\frac{d}{dt} a(t) &= |\fz|_{\sst0}^3\left(1+\frac{c}{|\fz|_{\sst0}} \, a(t) \right)^{\mbox{ $\frac{|\Wd|_{\sst0}^2}{c}-1$ }}, \\
a(0) &= 0.
\end{split} \right.
\]
Notice that no compactness assumption is needed to show this result, but it is required if one wants to conclude that the solution described above is the unique one starting at $\fz$.
 
Depending on the relation between $c$ and $|\Wd|_{\sst0}^2$, we get different types of solutions that exist on a maximal time interval of the form   
$(-\infty,+\infty)$, $(-\infty,T)$ or $(T,+\infty)$, for a certain $T<\infty$, and that are thus {\em eternal}, {\em ancient} or {\em immortal}, respectively 
(see corollaries \ref{CorEternal}, \ref{CorAncient}, \ref{CorImmortal}). 

These results apply to all known homogeneous and compact locally homogeneous manifolds admitting invariant SHF structures.  
Moreover, they allow us to obtain further examples of solutions to the Type IIA flow in addition to those determined in \cite[Sect. 9.3.2]{FPPZ} 
on the nilmanifold example given in \cite[Ex.~5.2]{DeBTom} and on the solvmanifold example given in \cite[Thm.~3.5]{ToVe}.


\section{Definite 3-forms and SU(3)-structures in six dimensions}\label{preliminaries}

\subsection{Definite 3-forms}
We briefly recall the definition of a definite 3-form on a six-dimensional vector space and how it is possible to construct a complex structure out of it. 
For more details, we refer the reader to \cite{Hit,Hit1,SaKi}. 

A 3-form $\f$ on a six-dimensional real vector space $V$ is said to be {\em definite}, or {\em non-degenerate}, if the contraction $\iota_v\f\in\Lambda^2V^*$ has rank $4$, for every non-zero vector $v\in V$. 
The set of all definite 3-forms on $V$ coincides with one of the two open orbits for the natural action of $\GL(V)$ on $\Lambda^3V^*.$ 
In detail, if $A:\Lambda^5V^*\rightarrow V\otimes \Lambda^6V^*$ denotes the isomorphism induced by the wedge product $\W : \Lambda^5V^*\otimes V^* \rightarrow \Lambda^6V^*$, and 
$\Omega \in \Lambda^6V^*$ is a non-zero element, then a 3-form $\f\in\Lambda^3V^*$ gives rise to an endomorphism $S_\f: V \rightarrow V$ defined as follows
\[
A(\iota_v\f\W\f) = S_\f(v)\,\Omega,
\]
for every $v\in V.$ 
The endomorphism $S_\f$ satisfies the identity $S_\f^2 = P(\f)\mathrm{Id}_V$, where $P(\f)$ is an irreducible $\SL(V)$-invariant  polynomial of degree $4$. 
The $\GL(V)$-orbit of definite 3-forms is then the open subset of $\Lambda^3V^*$
\[
\Lambda^3_{\sst-} V^* \coloneqq \{\f\in\Lambda^3V^* \st P(\f)<0\}. 
\]
Every definite form $\f \in \Lambda^3_{\sst-}V^*$ has $\GL(V)$-stabilizer conjugate to $\SL(3,\C)$, and thus it defines a complex structure  
\[
J_\f :V\rightarrow V,\quad J_\f = \frac{1}{\sqrt{-P(\f)}} \, S_\f. 
\]
The 3-form $J_\f\f = \f(J_\f\cdot,J_\f\cdot,J_\f\cdot)$ is also definite and it induces the same complex structure $J_\f$ as $\f$. Moreover, the complex 3-form  $\f + iJ_\f\f$ is of type $(3,0)$ with respect to $J_\f$.  
More generally, the 3-form $\rho\, \mathrm{Re}\left(e^{-i\theta} \left(\f + iJ_\f\f \right) \right)$ is definite and induces the complex structure $J_\f$,   
for every $\rho\,e^{-i\theta}\in\C\smallsetminus\{0\}$. 

\subsection{SU(3)-structures} 
An SU(3)-structure on a six-dimensional vector space $V$ is the data of an Hermitian structure $(g,J)$ with fundamental 2-form $\omega=g(J\cdot,\cdot)$ and a complex $(3,0)$-form $\Psi$ of constant norm.  

According to \cite{Hit1}, SU(3)-structures can be characterized in terms of non-degenerate forms as follows. 
Let $\omega\in\Lambda^2V^*$ be a non-degenerate 2-form on $V.$ 
Consider a definite 3-form $\psip\in\Lambda^3_{\sst-}V^*$, denote by $J$ the complex structure determined by $\psip$ and the orientation $\omega^3$, and denote by $\psim \coloneqq J\psip$ the imaginary part 
of the complex $(3,0)$-form $\Psi = \psip + i J\psip$.  
Then, the pair $(\omega,\psip)$ defines an SU(3)-structure on $V$ if and only if the following conditions are satisfied
\begin{enumerate}[-]
\item $\psip$ is {\em primitive} with respect to $\omega$, i.e., $\psip\W\omega=0$. This is equivalent to $\omega$ being of type $(1,1)$ with respect to $J$, namely $\omega(J\cdot,J\cdot)=\omega$;
\item the symmetric bilinear form $g\coloneqq \omega(\cdot,J\cdot)$ is positive definite;
\item the following {\em normalization condition} holds 
\begin{equation}\label{normalization}
\psip \W \psim = \frac23 \, \omega^3 = 4 \vol_g,  
\end{equation}
where $\vol_g$ denotes the volume form of the metric $g$. 
\end{enumerate}

Given an SU(3)-structure $(\omega,\psip)$ on $V$ with corresponding complex structure $J$ and metric $g$, 
there exists a $g$-orthonormal basis $\mathcal{B}=(e_1,\ldots,e_6)$ of $V$ with dual basis $\mathcal{B}^* = (e^1,\ldots,e^6)$ such that 
\begin{equation}\label{adaptedfr}
\omega = e^{12}+e^{34}+e^{56},\quad
\psip	= e^{135}-e^{146}-e^{236}-e^{245},\quad
\psim = e^{136}+e^{145}+e^{235}-e^{246}, 
\end{equation}
and $J(e_{2k-1}) = e_{2k}$, for $k=1,2,3$, where the symbol $e^{ij\cdots}$ is a shortening for the wedge product of covectors $e^i\W e^j \W \cdots$. 
The bases $\mathcal{B}$ and $\mathcal{B}^*$ are said to be {\em adapted} to the SU(3)-structure $(\omega,\psip)$. 

Let $*_g$ denote the Hodge operator induced by $g$ and the orientation on $V.$ 
Working with respect to an adapted basis, it is immediate to check that $\psim = *_g\psip$ and $*_g\omega=\tfrac12 \omega^2$.   

\smallskip

The group SU(3) acts irreducibly on the spaces $V^*$ and $\Lambda^5V^*$, 
while the spaces $\Lambda^2V^*$ and $\Lambda^3V^*$ decompose into $g$-orthogonal irreducible SU(3)-modules as follows (see e.g.~\cite{BeVe,ChSa})
\[
\begin{split}
\Lambda^2V^*	&= \Lambda^2_1V^*\oplus \Lambda^2_6V^*\oplus \Lambda^2_8V^*,\\
\Lambda^3V^*	&= \Lambda^3_{\mathrm{Re}}V^* \oplus \Lambda^3_{\mathrm{Im}}V^* \oplus \Lambda^3_6V^*\oplus \Lambda^3_{12}V^*, 
\end{split}
\]
where the submodules $\Lambda^3_{\mathrm{Re}}V^*$ and $\Lambda^3_{\mathrm{Im}}V^*$ are one-dimensional, and $\Lambda^k_nV^*$ denotes an $n$-dimensional irreducible submodule of $\Lambda^kV^*$.  
In detail
\begin{equation}\label{lambda2}
\begin{split}
\Lambda^2_1V^*	&= \R\,\omega,\\
\Lambda^2_6V^*	&= \left\{*_g(\alpha\W\psip) \st \alpha\in V^* \right\}, \\ 
\Lambda^2_8V^*	&= \left\{\sigma\in\Lambda^2V^* \st J\sigma=\sigma \mbox{ and } \sigma\W\omega^2=0\right\} \cong \mathfrak{su}(3),  
\end{split}
\end{equation}
and 
\begin{equation}\label{lambda3}
\begin{split}
\Lambda^3_{\mathrm{Re}}V^*	&= \R\,\psip, \quad  \Lambda^3_{\mathrm{Im}}V^*	= \R\,\psim, \\
\Lambda^3_6V^*	&= \left\{\alpha \W \omega \st \alpha\in V^*\right\}, \\
\Lambda^3_{12}V^*	&= \left\{\rho\in\Lambda^3V^* \st \rho\W\omega=0 \mbox{ and } \rho\W\psi^{\sst\pm}=0\right\}. 
\end{split}
\end{equation}
Moreover, the decomposition of the space $\Lambda^4V^*$ follows from that of $\Lambda^2V^*$ by applying the Hodge operator $*_g$. 

Notice that every $\sigma\in\Lambda^2_8V^*$ satisfies the identity $*_g(\sigma\W\omega)=-\sigma$, and that the Hodge dual of every $\rho\in\Lambda^3_{12}V^*$ is given by $*_g\rho = J\rho$ 
(see \cite[Remark 2.7]{BeVe}).


\section{From Type IIA geometry to symplectic SU(3)-structures}\label{SectTypeIIASympl}
We now review the definition of a Type IIA geometry introduced in \cite{FPPZ}, and we discuss how this type of geometric structure is related to symplectic SU(3)-structures. \smallskip

Let $(M,\omega)$ be a six-dimensional symplectic manifold and let $\f\in\Omega^3(M)$ be a {\em definite} 3-form, namely $\f|_x\in\Lambda^3_{\sst-}(T^*_xM)$ is definite at each point $x$ of $M.$ 
We denote by $J_\f\in\End(TM)$ the almost complex structure induced by $\f$ and the orientation $\omega^3$. 
If $\f$ is primitive with respect to $\omega$, then $\omega$ is of type $(1,1)$ with respect to $J_\f$ and, consequently, 
the 2-covariant tensor $g_\f \coloneqq \omega(\cdot,J_\f\cdot)$ is symmetric. 
In such a case, $\f$ is said to be {\em positive} if $g_\f$ defines a Riemannian metric.   
According to \cite{FPPZ}, the pair $(\omega,\f)$ is called a {\em Type IIA geometry} if $\f$ is closed, primitive and positive. \smallskip

Let $(\omega,\f)$ be a Type IIA geometry. Since $\f$ is definite, its norm $|\f| = g_\f(\f,\f)^{1/2}$ is a nowhere vanishing function, and we can consider the complex $(3,0)$-form  
\[
\Psi \coloneqq \frac{2}{|\f|}\,\Phi = \frac{2}{|\f|} \, \f + i \, \frac{2}{|\f|} \, J_\f\f. 
\]  
By \cite{FPPZ}, $J_\f\f=*_{g_\f}\f$, where $*_{g_\f}$ denotes the Hodge operator determined by $g_\f$ and the given orientation on $M.$ 
Therefore, the real and imaginary parts of $\Psi$ 
\begin{equation}\label{psippsimIIA}
\psip = \frac{2}{|\f|} \, \f,\quad \psim =  \frac{2}{|\f|} \, J_\f\f, 
\end{equation}
satisfy the normalization condition \eqref{normalization} everywhere on $M.$ Consequently, the pair $(\omega,\psip)$ defines an $\SU(3)$-structure on $M.$  

Since $\psip$ is proportional to $\f$, it is primitive and positive, and it induces the same almost complex structure as $\f$, for any given orientation on $M.$  
Therefore, the Riemannian metric $g_{\psip}$ associated with the pair $(\omega,\psip)$ as explained before coincides with the metric $g_\f$ corresponding to the Type IIA geometry $(\omega,\f)$. 
From now on, we shall refer to the pair $(\omega,\psip)$ as the {\em symplectic $\SU(3)$-structure} corresponding to the Type IIA geometry $(\omega,\f)$, 
and we shall denote the almost complex structure and Riemannian metric associated with these structures by $J$ and $g$, respectively. 

\smallskip

By \cite{ChSa}, the intrinsic torsion of an SU(3)-structure $(\omega,\psip)$ is determined by $d\omega$, $d\psip$ and $d\psim$. 
In detail, one can decompose these forms according to the $g$-orthogonal decomposition of the bundle $\Lambda^kT^*M$ that is induced by the irreducible decomposition 
of the $\SU(3)$-module $\Lambda^kV^*$, for $k=3,4$ (see Section \ref{preliminaries}). 
Each summand corresponds then to a component of the intrinsic torsion in the $\SU(3)$-irreducible splitting of $V^*\otimes\frsu(3)^\perp$. 
Using the notation of \cite{BeVe}, the decompositions of the spaces of $3$- and $4$-forms on $M$ can be described as follows
\begin{equation}\label{FormsSplit3}
\begin{split}
\Omega^3(M) &= \Omega^3_{\mathrm{Re}}(M) \oplus   \Omega^3_{\mathrm{Im}}(M)  \oplus \Omega^3_6(M)  \oplus \Omega^3_{12}(M), \\
\Omega^4(M) &= \Omega^4_1(M) \oplus   \Omega^4_6(M)  \oplus \Omega^4_8(M),
\end{split}
\end{equation}
where the definition of each summand follows from \eqref{lambda2}, \eqref{lambda3} and the correspondence $\Lambda^4_nV^* = *_g\Lambda^2_nV^*$. 

\smallskip

As for the symplectic $\SU(3)$-structure $(\omega,\psip)$ corresponding to a Type IIA geometry $(\omega,\f)$, we have $d\omega=0$, 
and a direct computation using the expression \eqref{psippsimIIA} of $\psip$ shows that
\[
d\psip = \Wo\W\psip \in \Omega^4_6(M),  
\]
where $\Wo \coloneqq d \log \frac{1}{|\f|}$. 
Now, from the results of \cite{BeVe}, we deduce that $d\psim \in  \Omega^4_6(M)  \oplus \Omega^4_8(M)$, and that its $\Omega^4_6(M)$-component  is $J\Wo\W\psip$. 
Since $J\f = *_g\f$, we then obtain
\[
d\psim = d\log\frac{1}{|\f|} \W \psim + \frac{2}{|\f|} \, d*_g\f = J\Wo \W\psip + \Wd \W \omega,
\]
where we used the identity $\alpha\W\psim = J\alpha\W\psip$, which holds for every 1-form $\alpha$ on $M,$ 
and we let 
\[
\Wd \coloneqq \frac{2}{|\f|}\,d^*\f = - \frac{2}{|\f|}*_g d \,*_g \f  \in \Omega^2_8(M).
\] 
Therefore, the forms $\Wo$ and $\Wd$ are the {\em torsion forms} of the symplectic SU(3)-structure $(\omega,\psip)$ corresponding to a Type IIA geometry (cfr.~\cite[Def.~2.10]{BeVe}). 

\smallskip

The next proposition summarizes how certain properties of $(\omega,\psip)$ - and, thus, of $(\omega,\f)$ - are related to $\Wo$ and $\Wd$. 
The proof immediately follows from the results of \cite{ChSa} and the above expressions of the torsion forms.

\begin{proposition}\label{TypeIIATypes}
Let $(\omega,\f)$ be a Type IIA geometry with corresponding symplectic $\SU(3)$-structure $(\omega,\psip)$. Then 
\begin{enumerate}[$i)$]
\item\label{iTypes} $J$ is integrable if and only if $\Wd = 0$, and thus if and only if $d^*\f =0$;
\item $d\psip = 0$ if and only if $\Wo = 0$, and thus if and only if $|\f|$ is constant;  
\item\label{iiiTypes} the holonomy group $\mathrm{Hol}(g)$ of the Riemannian metric $g$ is a subgroup of ${\SU}(3)$ if and only if $\Wo=0$ and $\Wd=0$, 
and thus if and only if $|\f|$ is constant and $J$ is integrable. 
\end{enumerate}
\end{proposition}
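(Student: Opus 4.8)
The plan is to derive all three equivalences directly from the explicit torsion forms computed just above the statement, together with the Chiossi--Salamon description \cite{ChSa} of the intrinsic torsion. Throughout I take $M$ to be connected. Since $(\omega,\psip)$ is symplectic we have $d\omega=0$, so by \cite{ChSa} the whole intrinsic torsion is carried by $d\psip$ and $d\psim$, equivalently by the two torsion forms $\Wo\in\Omega^1(M)$ and $\Wd\in\Omega^2_8(M)$, via the identities $d\psip=\Wo\W\psip$ and $d\psim=J\Wo\W\psip+\Wd\W\omega$ established above.

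I would treat $ii)$ first, as it is purely formal. From $d\psip=\Wo\W\psip$ and the description \eqref{lambda2}--\eqref{lambda3} of the SU(3)-modules, the pointwise map $\alpha\mapsto\alpha\W\psip$ sends $\Omega^1(M)$ isomorphically onto $\Omega^4_6(M)=*_g\Omega^2_6(M)$ (a dimension count against the parametrization $\Lambda^2_6V^*=\{*_g(\alpha\W\psip)\}$). Hence $d\psip=0$ if and only if $\Wo=0$, and since $\Wo=d\log\frac{1}{|\f|}$, on the connected manifold $M$ this holds precisely when $|\f|$ is constant.

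For $i)$ the substantive point is to single out, among the torsion data, the component that obstructs integrability of $J$. Recall that $J$ is integrable exactly when its Nijenhuis tensor vanishes, and that in the Gray--Hervella/Chiossi--Salamon scheme the Nijenhuis tensor is governed by the two $J$-symmetric torsion classes $W_1$ and $W_2$, which live respectively in the $\Omega^4_1(M)$- and $\Omega^4_8(M)$-parts of $d\psi^{\sst\pm}$ (and in the $\Omega^3_{\mathrm{Re}}(M)\oplus\Omega^3_{\mathrm{Im}}(M)$-part of $d\omega$). Now $d\omega=0$ kills the $d\omega$-contribution, while neither $d\psip$ nor $d\psim$ has an $\Omega^4_1(M)=\R\,\omega^2$-component, so $W_1=0$; the only surviving piece is the $\Omega^4_8(M)$-part of $d\psim$, namely $\Wd\W\omega$. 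Since $*_g(\sigma\W\omega)=-\sigma$ for every $\sigma\in\Omega^2_8(M)$, the map $\sigma\mapsto\sigma\W\omega$ is injective there, so $\Wd\W\omega=0$ if and only if $\Wd=0$; and as $\Wd=\frac{2}{|\f|}d^*\f$ with $|\f|$ nowhere zero, this is equivalent to $d^*\f=0$. This yields $i)$.

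Finally $iii)$ follows by combining the first two parts: $\mathrm{Hol}(g)\subseteq\SU(3)$ if and only if the SU(3)-structure is torsion-free, that is $d\omega=d\psip=d\psim=0$; as $d\omega=0$ holds automatically, this reduces to $\Wo=0$ and $\Wd=0$, equivalently $|\f|$ constant and $J$ integrable. The one genuinely non-formal step is the matching in $i)$ of $\Wd\W\omega$ with the torsion class $W_2$ responsible for the Nijenhuis tensor; this is where \cite{ChSa} is indispensable, whereas the remaining equivalences reduce to the injectivity of the two wedge maps above.
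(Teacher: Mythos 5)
Your proposal is correct and follows the same route the paper intends: the paper's proof is just the one-line remark that the statement ``immediately follows from the results of \cite{ChSa} and the above expressions of the torsion forms,'' and you have filled in exactly those details --- identifying $\Wd$ as the sole torsion component governing $N_J$ once $d\omega=0$ and the $\Omega^4_6(M)$-type of $d\psip$ are taken into account, and using the injectivity of $\alpha\mapsto\alpha\W\psip$ and $\sigma\mapsto\sigma\W\omega$ on the relevant modules. No gaps.
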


Notice that the vanishing of the torsion forms $\Wo$ and $\Wd$ is equivalent to the vanishing of the intrinsic torsion of the SU(3)-structure $(\omega,\psip)$. 
In such a case, $(\omega,\psip)$ is called {\em torsion-free} and the corresponding Riemannian metric $g$ is Ricci-flat, i.e., $\Ric_g=0$. 
In particular, the almost Hermitian structure $(\omega,J,g)$ is K\"ahler Ricci-flat. 

\begin{remark}
Points \ref{iTypes}) and \ref{iiiTypes}) of Proposition \ref{TypeIIATypes} can also be shown without introducing the symplectic SU(3)-structure associated with $(\omega,\f)$, see \cite{FPPZ}. 
\end{remark} 
 
\smallskip

 
\section{Special symplectic half-flat $\SU(3)$-structures}\label{SpecialSHFSection}
We now focus on the case where the Type IIA geometry $(\omega,\f)$ is defined by a 3-form $\f$ of constant norm. This happens, for instance, when $(M,\omega,\f)$ is a (locally) homogeneous space. 
Under this assumption, we have $\Wo=0$, and thus the SU(3)-structure $(\omega,\psip)$ corresponding to $(\omega,\f)$ satisfies the conditions 
\[
d\omega = 0,\quad d\psip	= 0, \quad d\psim = \Wd \W \omega, 
\]
where $\Wd = d^*\left(\frac{2}{|\f|}\f\right) = d^*\psip \in \Omega^2_8(M)$. 
In the literature, these SU(3)-structures are known as  {\em symplectic half-flat}  ({\em SHF} for short) \cite{CoTo,PoRa1,PoRa2, ToVe} or {\em special generalized Calabi-Yau} \cite{DeBTom0,DeBTom}. 
According to the classification by Chiossi-Salamon \cite{ChSa}, the symplectic SU(3)-structures we are considering constitute the class $\mathcal{W}_2^-\oplus\mathcal{W}_5$, 
while SHF structures determine the subclass $\mathcal{W}_2^-$.

\smallskip

From \cite{ChSa}, we know that the almost complex structure $J$ induced by a SHF structure $(\omega,\psip)$ is integrable if and only if $\Wd$ vanishes. 
Consequently, the torsion form $\Wd$ determines the Nijenhuis tensor of $J$ 
\[
N_J(X,Y) = \frac14 \left([JX,JY]  - J [JX,Y] -J[X,JY] - [X,Y] \right).
\]
In particular, their norms are related as follows
\begin{equation}\label{NJSHF}
|N_J|^2 = \frac12\, |\Wd|^2.
\end{equation}
Moreover, by \cite[Thm.~3.4]{BeVe}, the scalar curvature of the metric $g$ corresponding to $(\omega,\psip)$ is given by 
\[
\Scal_g = -\frac12\, |\Wd|^2,
\]
and thus it vanishes identically if and only if $(\omega,\psip)$ is torsion-free. 

We summarize some further properties of the torsion form $\Wd$ in the next lemma. For a proof, we refer the reader to \cite[Lemma 5.1]{FiRa}. 
\begin{lemma}\label{dwdlemma}
Let $(\omega,\psip)$ be a symplectic half-flat $\SU(3)$-structure. Then, the torsion form $\Wd = d^*\psip$ is coclosed, 
and its exterior differential decomposes as follows with respect to the decomposition \eqref{FormsSplit3} of $\Omega^3(M)$
\begin{equation}\label{dWdexpr}
d\Wd = \frac{\left|\Wd\right|^2}{4}\,\psip +\gamma,
\end{equation}
for a unique $\gamma\in\Omega^3_{12}(M)$. 

\noindent As a consequence, the following identities hold
\begin{equation}\label{iddw2}
\begin{array}{rcl} 
d\Wd \wedge \psip  &=& *_g d\Wd \wedge \psim  = 0, \vspace{0.1cm} \\ 
d \Wd \wedge \psim  &=&  \psip \W*_g d\Wd =  \left| \Wd \right|^2 \vol_g.
\end{array}
\end{equation}
Moreover, if $\gamma$ vanishes identically, then $|\Wd|$ is constant.  
\end{lemma}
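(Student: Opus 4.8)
The plan is to reduce the whole statement to pointwise algebra on $\Lambda^\bullet V^*$ combined with the three SHF structure equations $d\omega = 0$, $d\psip = 0$ and $d\psim = \Wd\wedge\omega$. Coclosedness of $\Wd$ is immediate and needs no structure equation at all: since $\Wd = d^*\psip$, we get $d^*\Wd = (d^*)^2\psip = 0$.

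The cornerstone of the argument will be the observation that $\Wd$ is a primitive $(1,1)$-form. Indeed $\Wd\in\Omega^2_8(M)$, and by \eqref{lambda2} this module consists of the $\sigma$ with $J\sigma = \sigma$, that is, of type $(1,1)$; wedging such a $\sigma$ with the $(3,0)$-form $\Psi = \psip + i\,\psim$ gives a form of type $(4,1)$, which vanishes in complex dimension three. Taking real and imaginary parts then yields the pointwise identities $\Wd\wedge\psip = 0$ and $\Wd\wedge\psim = 0$, which will carry the argument. Alongside these I would record the elementary identity $\sigma\wedge\sigma\wedge\omega = -|\sigma|^2\vol_g$ for $\sigma\in\Lambda^2_8 V^*$ (from $*_g\sigma = -\sigma\wedge\omega$ and $\sigma\wedge *_g\sigma = |\sigma|^2\vol_g$), together with $\psip\wedge\psim = 4\,\vol_g$ and $|\psip|^2 = 4$ coming from \eqref{normalization}.

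With these facts I would then pin down the four components of $d\Wd$ in the orthogonal splitting of $\Omega^3(M)$ recorded in \eqref{FormsSplit3} and \eqref{lambda3}. Differentiating $d\psim = \Wd\wedge\omega$ and using $d^2 = 0$, $d\omega = 0$ gives $d\Wd\wedge\omega = 0$; as the primitive $3$-forms are exactly $\Omega^3_{\mathrm{Re}}\oplus\Omega^3_{\mathrm{Im}}\oplus\Omega^3_{12}$, this removes the $\Omega^3_6$-component. Differentiating $\Wd\wedge\psip = 0$ and using $d\psip = 0$ gives $d\Wd\wedge\psip = 0$, and since $*_g\psim = -\psip$ the $\Omega^3_{\mathrm{Im}}$-projection is detected by $\langle d\Wd,\psim\rangle\,\vol_g = -\,d\Wd\wedge\psip$, hence vanishes. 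Differentiating $\Wd\wedge\psim = 0$ and substituting $d\psim = \Wd\wedge\omega$ together with the identity above gives $d\Wd\wedge\psim = -\,\Wd\wedge\Wd\wedge\omega = |\Wd|^2\,\vol_g$, so that, using $\langle d\Wd,\psip\rangle\,\vol_g = d\Wd\wedge\psim$ and $|\psip|^2 = 4$, the $\Omega^3_{\mathrm{Re}}$-projection equals $\tfrac{|\Wd|^2}{4}\,\psip$. The residue is the asserted unique $\gamma\in\Omega^3_{12}(M)$, giving \eqref{dWdexpr}; the identities \eqref{iddw2} then drop out by inserting \eqref{dWdexpr} and using $\gamma\wedge\psi^{\sst\pm} = 0$, $*_g\gamma = J\gamma\in\Omega^3_{12}$ and $\psi^{\sst\pm}\wedge\psi^{\sst\pm} = 0$.

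For the last assertion, if $\gamma\equiv 0$ then $d\Wd = \tfrac{|\Wd|^2}{4}\,\psip$; applying $d$ and using $d^2 = 0$, $d\psip = 0$ forces $d(|\Wd|^2)\wedge\psip = 0$, and since $\alpha\mapsto\alpha\wedge\psip$ is injective on $1$-forms (by \eqref{lambda2} the map $\alpha\mapsto *_g(\alpha\wedge\psip)$ is an isomorphism onto the six-dimensional $\Lambda^2_6 V^*$), I conclude $d(|\Wd|^2) = 0$, so $|\Wd|$ is constant. The hard part will not be any single computation but the bookkeeping in the third step, namely matching each wedge identity to the correct orthogonal projection onto $\Omega^3_{\mathrm{Re}}$, $\Omega^3_{\mathrm{Im}}$, $\Omega^3_6$, $\Omega^3_{12}$; the type argument yielding $\Wd\wedge\psi^{\sst\pm} = 0$ is what makes the scheme go through.
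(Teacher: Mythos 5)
Your proof is correct. The paper does not actually prove Lemma \ref{dwdlemma}; it defers to \cite[Lemma 5.1]{FiRa}. Your argument is a complete, self-contained derivation along the expected lines: coclosedness from $d^*\circ d^*=0$; the pointwise type identities $\Wd\W\psi^{\sst\pm}=0$ coming from $\Wd\in\Omega^2_8(M)$ being a primitive $(1,1)$-form wedged against the $(3,0)$-form $\Psi$; and then differentiation of the structure equations $d\psip=0$, $d\psim=\Wd\W\omega$ to kill the $\Omega^3_6(M)$- and $\Omega^3_{\mathrm{Im}}(M)$-components of $d\Wd$ and to identify the $\Omega^3_{\mathrm{Re}}(M)$-component as $\tfrac{|\Wd|^2}{4}\psip$ via $d\Wd\W\psim=-\Wd\W\Wd\W\omega=|\Wd|^2\vol_g$. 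The bookkeeping of projections (using $*_g\psim=-\psip$, $|\psip|^2=4$) and the final step, where injectivity of $\alpha\mapsto\alpha\W\psip$ on $1$-forms forces $d(|\Wd|^2)=0$ when $\gamma\equiv0$, are all sound.
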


To the best of our knowledge, the currently known examples of compact 6-manifolds admitting SHF structures are given by the 6-torus $\mathbb{T}^6$ (see \cite{DeBTom,ToVe}) and by 
certain nilmanifolds and solvmanifolds (see \cite{CoTo,DeBTom0,DeBTom,FMOU, ToVe}). In this last case, the 6-manifold $M$ is the compact quotient of 
a six-dimensional simply connected nilpotent or solvable Lie group $\G$ by a cocompact discrete subgroup (lattice) $\Gamma\subset\G$, 
and the SHF structure $(\omega,\psip)$ on $M=\Gamma\backslash\G$ is induced by a left-invariant one on $\G$. 
In particular, $(\Gamma\backslash\G, \omega,\psip)$ is a locally homogeneous space that is not globally homogeneous. 
Indeed, the SHF structure $(\omega,\psip)$ is preserved by the local diffeomorphisms of $\Gamma\backslash\G$ induced by the locally defined left translation action of $\G$ on $\Gamma\backslash\G$, 
while it is not preserved by the transitive right translation action of $\G$ on $\Gamma\backslash\G$. 
Notice that examples of this form may occur only when $\G$ is unimodular and solvable. 
Indeed, a Lie group admits lattices only if it is unimodular \cite{Mil}, and a unimodular Lie group admitting left-invariant symplectic structures must be solvable \cite{Chu}. 

A left-invariant SHF structure on a Lie group $\G$ is determined by an SU(3)-structure of the same type on the Lie algebra $\frg$ of $\G$ and, conversely, a SHF structure on a Lie algebra $\frg$ 
gives rise to a left-invariant SHF structure on every Lie group corresponding to $\frg$.  
The classification of the isomorphism classes of nilpotent Lie algebras admitting SHF structures was given in \cite{CoTo}, while the analogous classification in the case of solvable 
Lie algebras was obtained in \cite{FMOU}. 
Before summarizing these results in the next theorem, we recall a useful notation. 
Given a six-dimensional Lie algebra $\frg$, its structure equations with respect to a basis of covectors $\left(e^1,\ldots,e^6 \right)$ 
are described by the $6$-tuple $(d e^1,\ldots, d e^6)$, where $d$ denotes the Chevalley-Eilenberg differential of $\frg$. 

\begin{theorem}[\cite{CoTo,FMOU}]\label{solvableSHF}
Let $\frg$ be a six-dimensional, unimodular, non-abelian, solvable Lie algebra. Then, $\frg$ admits symplectic half-flat $\SU(3)$-structures if and only if 
it is isomorphic to one of the following Lie algebras
\begin{eqnarray*}
\fre(1,1)\oplus\fre(1,1)			&=&	(0,-e^{13},-e^{12},0,-e^{46},-e^{45});\\
\frg_{5,1}\oplus\R				&=& (0,0,0,0,e^{12},e^{13});\\
A_{5,7}^{-1,-1,1}\oplus\R			&=& (e^{15},-e^{25},-e^{35},e^{45},0,0);\\
A_{5,17}^{a,-a,1}\oplus\R			&=& (a e^{15}+e^{25},-e^{15}+a e^{25},-a e^{35}+e^{45},-e^{35}-a e^{45},0,0),~a>0;\\
\frg_{6,N3}					&=& (0,0,0,e^{12},e^{13},e^{23});\\
\frg_{6,38}^{0}					&=& (e^{23},-e^{36},e^{26},e^{26}-e^{56},e^{36}+e^{46},0);\\
\frg_{6,54}^{0,-1}				&=& (e^{16} + e^{35}, -e^{26} + e^{45}, e^{36}, -e^{46}, 0, 0);\\
\frg_{6,118}^{0,-1,-1}				&=& (-e^{16} +e^{25},-e^{15} -e^{26}, e^{36} -e^{45}, e^{35} +e^{46}, 0, 0).
\end{eqnarray*}
The nilpotent Lie algebras appearing in the previous list are $\frg_{5,1}\oplus\R$ and $\frg_{6,N3}$. 
\end{theorem}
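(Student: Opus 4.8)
The plan is to turn the statement into a finite case analysis over the known classifications of six-dimensional Lie algebras, deciding for each candidate whether the two compatible closed stable forms required by a SHF structure can coexist. Recall from Section~\ref{preliminaries} that a SHF structure on $\frg$ is the datum of a non-degenerate $\omega\in\Lambda^2\frg^*$ and a definite $\psip\in\Lambda^3_{\sst-}\frg^*$ that is primitive and positive with respect to $\omega$, satisfies the normalization \eqref{normalization}, and obeys $d\omega=0$, $d\psip=0$, where $d$ is the Chevalley--Eilenberg differential. In particular $\frg$ must be symplectic, so the first reduction is to intersect the classification list with the sublist of those $\frg$ carrying a closed non-degenerate $2$-form; as recalled in the text, the existence of a symplectic form on a unimodular Lie group forces solvability (Chu \cite{Chu}), and by Milnor \cite{Mil} unimodularity is the relevant hypothesis, so there is no loss in restricting to the unimodular solvable case from the outset.

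For the nilpotent part I would invoke the (finite) classification of six-dimensional nilpotent Lie algebras together with Salamon's determination of which of them are symplectic, and then impose the remaining condition $d\psip=0$ with $P(\psip)<0$ on each symplectic candidate. For the genuinely solvable part I would use the Mubarakzyanov--Turkowski classification of six-dimensional solvable Lie algebras, select the unimodular ones by requiring $\tr(\ad_X)=0$ for all $X\in\frg$, and again retain only the symplectic ones. In both cases the core step is, for a fixed candidate, the existence test for the compatible closed definite $3$-form: one parametrizes the general closed $2$-form and $3$-form as solutions of the linear systems $d\omega=0$, $d\psip=0$ on $\Lambda^\bullet\frg^*$, uses the automorphism group $\Aut(\frg)$ to normalize $\omega$ to a convenient form, imposes primitivity $\psip\W\omega=0$ (again linear), and then asks whether the remaining finite-dimensional family contains an element with $P(\psip)<0$ inducing a positive-definite $g=\omega(\cdot,J\cdot)$ and satisfying \eqref{normalization}. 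Since $P(\psip)<0$ and positivity of $g$ are open conditions while the normalization can be arranged by scaling, a candidate is realized as soon as one explicit admissible $\psip$ is produced, and it is excluded precisely when the Hitchin quartic $P$ cannot be made negative, with the correct sign of the metric, on the affine space of closed primitive $3$-forms.

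The sufficiency (``if'') direction is then the explicit part: for each algebra in the list one writes down $\omega$ and $\psip$ in the displayed coframe --- taking the normal form \eqref{adaptedfr} as an ansatz after choosing an adapted basis --- and checks $d\omega=0$, $d\psip=0$ and the compatibility relations of Section~\ref{preliminaries} by a direct computation with the given structure equations. I expect the main obstacle to lie entirely in the ``only if'' direction, and more precisely in ruling out the symplectic algebras that do not admit a compatible closed definite $3$-form: here closedness and primitivity are linear conditions, but the obstruction is the emptiness of the intersection of this linear subspace with the open cone of definite, positively normalized $3$-forms, so one must analyze the restriction of the non-linear Hitchin invariant $P$ to a case-dependent affine subspace and show it never attains the admissible sign. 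The bookkeeping is heavy because the list of six-dimensional unimodular solvable Lie algebras is long, and for several algebras the analysis only becomes tractable after exploiting the $\Aut(\frg)$-action to bring $\omega$, and the closed $3$-forms, into a standard form.
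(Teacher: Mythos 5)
The paper does not prove this statement: Theorem \ref{solvableSHF} is imported verbatim from \cite{CoTo} (nilpotent case) and \cite{FMOU} (general unimodular solvable case), so there is no internal proof to measure your attempt against. That said, your outline is an accurate reconstruction of the architecture of the proofs in those references: both proceed by a finite case analysis over the known classification lists, first discarding algebras with no symplectic form, then for each surviving candidate either exhibiting an explicit pair $(\omega,\psip)$ with $d\omega=0$, $d\psip=0$, $\psip\W\omega=0$, $P(\psip)<0$ and $g=\omega(\cdot,J_{\psip}\cdot)>0$, or proving that no such pair exists. Your observation that closedness and primitivity are linear while definiteness and positivity are open, so that realizability reduces to producing one admissible point and non-realizability to an emptiness statement for an open cone intersected with an affine subspace, is exactly the right framing, and the normalization \eqref{normalization} is indeed harmless since it can be achieved by rescaling.

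What your proposal does not yet contain is the part of the argument that actually does the work. For the ``if'' direction this is only a matter of exhibiting the eight explicit structures (the paper's Appendix \ref{appendix} records such data). For the ``only if'' direction, however, the formulation ``show that the Hitchin quartic $P$ cannot be made negative with the correct sign of the metric on the affine space of closed primitive $3$-forms'' is a restatement of the problem rather than a method: on a six-dimensional solvable Lie algebra the space of closed $3$-forms can have dimension well into the teens, and a direct analysis of the sign of a degree-$4$ invariant on such a family, even after an $\Aut(\frg)$-normalization of $\omega$, is not tractable algebra by algebra across the full Mubarakzyanov--Turkowski list. The cited proofs avoid this by interposing structural obstructions that kill most candidates before any computation with $P$ is needed: unimodularity forces $H^6(\frg)\cong\R$ and hence $[\omega]^3\neq 0$, giving lower bounds on Betti numbers; the derived algebra and (in the nilpotent case) the lower central series constrain which monomials can appear in a closed $3$-form, and one shows that every closed primitive $3$-form is annihilated by contraction with a suitable vector, hence degenerate. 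Without identifying obstructions of this kind, your plan is a correct but incomplete program rather than a proof.
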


A 6-manifold $M$ with an SU(3)-structure $(\omega,\psip)$ is said to be {\em homogeneous} if it is acted on transitively by its automorphism group 
\[
\mathrm{Aut}(M,\omega,\psip) = \left\{f\in\mathrm{Dif{}f}(M) ~|~ f^*\omega=\omega,~f^*\psip=\psip \right\},
\] 
or a subgroup thereof. 
By  \cite[Thm.~2.1]{PoRa2}, if $M$ is compact and $(\omega,\psip)$ is a SHF structure that is not torsion-free, 
then the identity component of $\mathrm{Aut}(M,\omega,\psip)$ is a $k$-torus with $k\leq \mathrm{min}\{5,b_1(M)\}$. 
Consequently, a homogeneous SHF 6-manifold cannot be compact unless it is a flat torus with a torsion-free $\SU(3)$-structure. 

The homogeneous examples given by six-dimensional Lie groups $\G$ with a left-invariant SHF structure arise in the case where  
$\G \subseteq \mathrm{Aut}(M,\omega,\psip)$ acts simply transitively on $M.$ 
More generally, if the transitive $\G$-action is not free, 
then $M$ is $\G$-equivariantly diffeomorphic to the quotient $\G/\K$, where $\K$ is a compact subgroup of $\G$, 
and the SHF-structure $(\omega,\psip)$ on $M=\G/\K$ is $\G$-invariant. If $\G$ is semisimple, the following classification result holds. 

\begin{theorem}[\cite{PoRa1}]\label{HomSHF}
Let $(M,\omega,\psip)$ be a symplectic half-flat $6$-manifold which is homogeneous under the action of a semisimple Lie group $\G$. 
Then, $M$ is non-compact and one of the following situations occurs
\begin{enumerate}[-]
\item $M = \SU(2,1)/\mathrm{T}^2$, and there exists a $1$-parameter family of pairwise non-homothetic and non-isomorphic invariant SHF structures; 
\item $M = \SO_{\sst0}(4,1)/\U(2)$, and there exists a unique invariant SHF structure up to homothety. 
\end{enumerate}
In both cases, the Riemannian metric $g$ induced by the SHF structure has Hermitian Ricci tensor, namely $\Ric_g(J\cdot,J\cdot) = \Ric_g$. 
\end{theorem}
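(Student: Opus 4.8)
The plan is to reduce to the Lie algebra level and then eliminate all but two candidates. Fixing a base point with reductive decomposition $\frg=\frk\oplus\mathfrak{m}$, a $\G$-invariant SHF structure $(\omega,\psip)$ is the same as an $\Ad(\K)$-invariant symplectic form and definite $3$-form on $\mathfrak{m}\cong T_{e\K}M$ for which the closedness conditions $d\omega=0$, $d\psip=0$, $d\psim=\Wd\W\omega$ hold; via the Chevalley--Eilenberg differential these become algebraic equations in the structure constants. So the task becomes to list the non-compact semisimple $\frg$ admitting such $\Ad(\K)$-invariant data.

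First I would prove that $M$ is non-compact, so that $\frg$ is non-compact. The structure cannot be torsion-free: otherwise $g$ would be Ricci-flat (Proposition \ref{TypeIIATypes}) and hence flat by the theorem of Alekseevskii--Kimelfeld, forcing $M$ to be a flat torus, which carries no transitive semisimple action. Thus $\Wd\neq0$, the almost complex structure $J$ is non-integrable, and $\Scal_g=-\tfrac12|\Wd|^2<0$. Were $M$ compact, \cite[Thm.~2.1]{PoRa2} would make the identity component of $\Aut(M,\omega,\psip)$ a torus; but $\G$ semisimple acts by automorphisms, and a connected semisimple group has no nontrivial homomorphism into an abelian group, a contradiction.

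Next I would use that a homogeneous symplectic manifold of a semisimple group is a coadjoint orbit: since $H^1(\frg)=H^2(\frg)=0$ by Whitehead's lemmas, the invariant symplectic form integrates to an equivariant moment map whose image is a coadjoint orbit $\mathcal{O}\subset\frg^*$ onto which $M$ covers, so $\frk$ is the reductive centralizer of a point of $\mathcal{O}$ and $\dim\frg=6+\dim\frk$. Chu's theorem \cite{Chu} rules out $\frk=0$, whence $\dim\frg\geq8$; positivity of $g$ forces the isotropy representation of $\K$ on $\mathfrak{m}$ to be precompact. Enumerating the non-compact semisimple Lie algebras of dimension $\geq8$ with a six-dimensional coadjoint orbit and precompact isotropy leaves the simple algebras $\frsu(2,1)$, $\mathfrak{sl}(3,\R)$ (dimension $8$) and $\frso(4,1)$, $\mathfrak{sp}(4,\R)\cong\frso(3,2)$ (dimension $10$), together with reducible products such as $(\SL(2,\R)/\SO(2))^{3}$.

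For each candidate I would list the $\Ad(\K)$-invariant $2$- and $3$-forms on $\mathfrak{m}$ and impose primitivity, positivity, the normalization \eqref{normalization} and the SHF equations. Several cases fall out structurally: the split $\mathfrak{sl}(3,\R)$ has no precompact codimension-six isotropy; and the reducible products together with the Hermitian-symmetric model $\mathfrak{sp}(4,\R)$ admit only \emph{integrable} invariant almost-complex structures, which by the previous step cannot occur. What remains is to solve the SHF equations on the flag-type spaces of $\frsu(2,1)$ and $\frso(4,1)$, which yields exactly $M=\SU(2,1)/\mathrm{T}^2$, carrying a one-parameter family of invariant SHF structures after normalizing $\omega$, and $M=\SO_{\sst0}(4,1)/\U(2)$, carrying a single one up to scale; the members of the former family I would separate, up to homothety and isomorphism, by a scale-invariant built from $|\Wd|$ and the Ricci eigenvalues. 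Finally, on each model $\Ric_g$ is an $\Ad(\K)$-invariant symmetric $2$-tensor, so the identity $\Ric_g(J\cdot,J\cdot)=\Ric_g$ follows by inspecting the isotropy representation together with the explicit curvature. The main obstacle is exactly this elimination: organizing the enumeration of semisimple candidates and carrying out the invariant-form computations and positivity checks that discard all but the two surviving spaces.
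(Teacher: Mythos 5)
This theorem is quoted from \cite{PoRa1}; the present paper gives no proof of it, so there is no internal argument to compare against. Your outline reproduces the strategy of the cited proof faithfully: non-compactness via the torus structure of the automorphism group from \cite{PoRa2} (after excluding the torsion-free case), reduction to coadjoint orbits with compact isotropy via Whitehead's lemmas and Chu's theorem, and a case-by-case elimination leaving $\SU(2,1)/\mathrm{T}^2$ and $\SO_{\sst0}(4,1)/\U(2)$. The only caveat is that you explicitly defer the enumeration of semisimple candidates and the invariant-form computations, which is where essentially all of the work in \cite{PoRa1} resides; as a proof sketch, however, the approach is sound and matches the source.
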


Requiring the Ricci tensor to be Hermitian is a meaningful curvature constraint for SHF structures. 
Indeed, a SHF structure whose Riemannian metric $g$ is Einstein must be torsion-free \cite[Cor.~4.1]{BeVe}, 
and the Hermitian condition is a natural replacement of the Einstein condition on almost K\"ahler manifolds, see  \cite{BlIa}. 

Using the description of the Ricci tensor of an SU(3)-structure given in \cite{BeVe}, 
in \cite{PoRa1} we proved that SHF structures with Hermitian Ricci tensor can be characterized as follows.  

\begin{proposition}[\cite{PoRa1}]\label{JRic}
A symplectic half-flat $\SU(3)$-structure $(\omega,\psip)$ has Hermitian Ricci tensor if and only if 
\[
\Delta_g\psip = \lambda\,\psip.
\]   
When this happens, $\lambda = \frac{1}{4}\,|\Wd|^2$ and it is constant. Consequently, $\Scal_g$ is constant, too. 
\end{proposition}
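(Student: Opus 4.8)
The plan is to recast the eigenvalue equation $\Delta_g\psip=\lambda\psip$ as the vanishing of the component $\gamma$ from Lemma \ref{dwdlemma}, and then to match $\gamma=0$ with the Hermitian condition on $\Ric_g$ through the Bedulli--Vezzoni description of the Ricci tensor. First I would compute the Hodge Laplacian of $\psip$: since the structure is symplectic half-flat we have $d\psip=0$, so the term $d^*d\,\psip$ vanishes and, using $d^*\psip=\Wd$ together with \eqref{dWdexpr},
\[
\Delta_g\psip = d\,d^*\psip = d\Wd = \frac{|\Wd|^2}{4}\,\psip + \gamma .
\]
As $\Omega^3_{\mathrm{Re}}(M)=\R\,\psip$ and $\Omega^3_{12}(M)$ are $g$-orthogonal, projecting the identity $\Delta_g\psip=\lambda\psip$ onto these two summands shows that it holds (for a function $\lambda$) precisely when $\gamma=0$, in which case $\lambda=\tfrac14|\Wd|^2$. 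Moreover, once $\gamma=0$ the final assertion of Lemma \ref{dwdlemma} forces $|\Wd|$ to be constant, so $\lambda$ is constant; and since $\Scal_g=-\tfrac12|\Wd|^2=-2\lambda$, the scalar curvature is constant as well. The whole statement therefore reduces to the equivalence that $\Ric_g$ is Hermitian if and only if $\gamma=0$.

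To establish this equivalence I would argue pointwise and split $\Ric_g\in\Sym^2(T^*M)$ into its $J$-invariant and $J$-anti-invariant parts, the Hermitian condition $\Ric_g(J\cdot,J\cdot)=\Ric_g$ being exactly the vanishing of the anti-invariant part $\Ric_g^-$. As an $\SU(3)$-module the anti-invariant part sits in the $12$-dimensional irreducible module of $J$-anti-invariant symmetric $2$-tensors, which is isomorphic to the module $\Lambda^3_{12}V^*$ hosting $\gamma$; I fix once and for all an $\SU(3)$-equivariant isomorphism between the associated bundles. Now I would insert the torsion of a symplectic half-flat structure -- only the component $\Wd\in\Omega^2_8(M)\cong\frsu(3)$ is nonzero -- into the Bedulli--Vezzoni formula for $\Ric_g$ from \cite{BeVe}. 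That formula has a part linear in the covariant derivative of the torsion and a part quadratic in the torsion. The quadratic part is a symmetric bilinear expression in $\Wd$, hence a section of the bundle modelled on $\Sym^2\frsu(3)$, whose irreducible summands have dimensions $1$, $8$ and $27$ and thus contain no copy of the $12$-dimensional module; so it contributes nothing to $\Ric_g^-$. Hence $\Ric_g^-$ is governed solely by the derivative part, whose $\Lambda^3_{12}$-content is exactly $\gamma$; since this module occurs with multiplicity one in the module $V^*\otimes\frsu(3)$ that hosts the covariant derivative of $\Wd$, Schur's lemma yields $\Ric_g^-=c\,\gamma$ for a universal scalar $c$.

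The crux, and the only genuinely computational point, is to check that $c\neq0$, i.e.\ that the equivariant map producing $\Ric_g^-$ from $\gamma$ in the formula of \cite{BeVe} is not identically zero; this cannot be decided by representation theory alone. I would settle it by evaluating the relevant contraction on a single nonzero $\gamma$ with respect to an adapted frame \eqref{adaptedfr}. Granting $c\neq0$, we obtain $\Ric_g^-=0\Leftrightarrow\gamma=0$, and combined with the reduction of the first step this proves that $\Ric_g$ is Hermitian if and only if $\Delta_g\psip=\lambda\psip$, with $\lambda=\tfrac14|\Wd|^2$ constant and $\Scal_g$ constant.
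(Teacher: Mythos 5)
Your proposal is correct and follows essentially the same strategy as the proof in \cite{PoRa1} that the paper cites for this statement: one reduces the eigenvalue equation to the vanishing of the $\Omega^3_{12}(M)$-component $\gamma$ of $d\Wd$ via $\Delta_g\psip=dd^*\psip=d\Wd$ and Lemma \ref{dwdlemma} (which also yields $\lambda=\tfrac14|\Wd|^2$ constant and hence $\Scal_g=-\tfrac12|\Wd|^2$ constant), and then identifies $\gamma=0$ with the Hermitian condition through the Bedulli--Vezzoni description of $\Ric_g$. The only place you diverge is in how the equivalence $\Ric_g(J\cdot,J\cdot)=\Ric_g\Leftrightarrow\gamma=0$ is established: the cited proof specializes the explicit Ricci formula of \cite[Thm.~3.4]{BeVe} to the class $\mathcal{W}_2^-$, where the $J$-anti-invariant part of $\Ric_g$ appears with an explicit nonzero coefficient in front of the symmetric endomorphism corresponding to the $\Lambda^3_{12}$-part of $d\Wd$, so nothing further needs to be verified; you instead argue abstractly that $\Sym^2\frsu(3)\cong\mathbf{1}\oplus\mathbf{8}\oplus\mathbf{27}$ contains no copy of the $12$-dimensional module while that module occurs with multiplicity one in $V^*\otimes\frsu(3)$, and invoke Schur's lemma to get $\Ric_g^-=c\,\Sigma_{12}^{-1}(\gamma)$ for a universal constant $c$. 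That bookkeeping is sound and correctly isolates the one genuinely computational point, but as written your proof is conditional on the unperformed check $c\neq 0$; reading the coefficient directly from the explicit formula in \cite{BeVe} (as \cite{PoRa1} does) closes this immediately, whereas your plan requires evaluating the contraction on one example with $\gamma\neq 0$, e.g.\ any of the non-Hermitian Lie algebra examples in Appendix \ref{appendix}.
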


If $(\omega,\psip)$ is a SHF structure with Hermitian Ricci tensor, then the previous result implies that $\psip$ is an exact 3-form, as $\Delta_g\psip = dd^*\psip = d\Wd$. 
The converse is not true, as there exists an example of SHF structure $(\omega,\psip)$ on $\mathbb{S}^3 \times \R^3$ such that  $\psip$ is exact and $\Ric_g$ is not Hermitian, see \cite{PoRa2}. 
This example is complete and invariant under the natural cohomogeneity one  action of $\SO(4)$ on $\mathbb{S}^3 \times \R^3$. 

In addition to the non-compact homogeneous examples of Theorem \ref{HomSHF}, an example of SHF structure with Hermitian Ricci tensor occurs on the Lie algebra $\fre(1,1)\oplus\fre(1,1)$, 
and thus on compact quotients of the corresponding simply connected Lie group, see \cite[Thm.~3.5]{ToVe} and Proposition \ref{SpecialFR} below. 
We are not aware of the existence of further examples that are not (locally) homogeneous.  
In fact, requiring $\psip$ to be exact imposes strong constraints on the symmetries of a compact SHF manifold $(M,\omega,\psip)$, as the next result shows.

\begin{proposition}\label{autexact}
The automorphism group of a compact symplectic half-flat 6-manifold $(M,\omega,\psip)$ with $[\psip] = 0 \in {H}^3_{\mathrm{dR}}(M) $ is finite. 
\end{proposition}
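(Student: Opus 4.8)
The plan is to show that the Lie algebra $\aut$ of $\Aut(M,\omega,\psip)$ is trivial. Since an automorphism preserves $\omega$ and $\psip$, hence the induced $J$ and $g=\omega(\cdot,J\cdot)$, the group $\Aut(M,\omega,\psip)$ is a closed subgroup of $\Isom(M,g)$, which is compact because $M$ is. A compact Lie group has finitely many connected components, so $\Aut(M,\omega,\psip)$ is finite exactly when its identity component is a point, i.e.\ when $\aut=0$; by \cite{PoRa2} this identity component is in any case a torus. Thus it suffices to prove that every $X\in\aut$ vanishes. Such an $X$ is a Killing field with $\mathcal{L}_X\omega=\mathcal{L}_X\psip=\mathcal{L}_X\psim=0$.

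First I would record that $\iota_X\psip$ is closed and exact. Closedness is immediate from Cartan's formula, $d\iota_X\psip=\mathcal{L}_X\psip-\iota_X d\psip=0$, using $d\psip=0$. For exactness, write $\psip=d\beta$ and average $\beta$ over the compact identity component to obtain an invariant primitive, so that $\mathcal{L}_X\beta=0$ and $\iota_X\psip=\iota_X d\beta=-d(\iota_X\beta)$; equivalently $[\iota_X\psip]=[\mathcal{L}_X\beta]=0$, since the Lie derivative along $X$ acts trivially on de Rham cohomology. Next I would establish the pointwise identity $*_g(\iota_X\psip)=X^\flat\wedge\psim$, checked on an adapted frame as in \eqref{adaptedfr}. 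Together with the nondegeneracy of $\psip$ (the contraction $\iota_X\psip$ has rank $4$, hence is nonzero, whenever $X\neq0$), this identity reduces the whole statement to showing $\iota_X\psip=0$.

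The heart of the argument is then to prove that $\iota_X\psip$ is also coclosed, hence harmonic; being exact, it must then vanish, forcing $X=0$. Via $*_g(\iota_X\psip)=X^\flat\wedge\psim$, coclosedness is equivalent to the $4$-form $X^\flat\wedge\psim$ being closed, and a short computation gives $d(X^\flat\wedge\psim)=dX^\flat\wedge\psim-X^\flat\wedge\Wd\wedge\omega$, since $d\psim=\Wd\wedge\omega$. The task is therefore to show that this expression vanishes, equivalently (using a dual Cartan identity $\iota_X d^*+d^*\iota_X=\iota_{(dX^\flat)^\sharp}$ applied to $\psip$, together with $d^*\psip=\Wd$) that $\iota_{(dX^\flat)^\sharp}\psip=\iota_X\Wd$. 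This is where the Killing condition enters essentially: the relation $\mathcal{L}_X\psip=\n_X\psip-(\n X)\cdot\psip=0$ ties the skew endomorphism $\n X$ to the intrinsic torsion, and the needed identity should follow by substituting the expression for $\n\psip$ in terms of $\Wd$ (as encoded in Lemma \ref{dwdlemma} and \cite{BeVe}) and contracting; note that this step does not use exactness, which is invoked only to make $\iota_X\psip$ exact. I expect this torsion computation — matching $\iota_{(dX^\flat)^\sharp}\psip$ with $\iota_X\Wd$ — to be the main obstacle, the remainder being formal once it is in place.
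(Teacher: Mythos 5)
Your overall strategy is sound up to a point: reducing to $\mathfrak{aut}(M,\omega,\psip)=\{0\}$, noting that $\iota_X\psip$ is closed and (by the averaging argument, or because $\mathcal{L}_X=d\iota_X+\iota_Xd$ acts trivially on cohomology) exact, and observing that $\iota_X\psip=0$ forces $X=0$ by non-degeneracy of $\psip$ are all correct. But the proof has a genuine gap at exactly the place you flag: the coclosedness of $\iota_X\psip$, equivalently the identity $dX^\flat\W\psim=X^\flat\W\Wd\W\omega$ for an infinitesimal automorphism $X$, is never established. This is not a formality to be filled in later — it is the entire analytic content of your argument, and it is far from obvious that it holds: the Killing condition ties $\n X$ to the torsion only through $\mathcal{L}_X\psip=0$, and extracting from this the precise cancellation between $dX^\flat\W\psim$ and the torsion term $X^\flat\W d\psim$ requires a computation you have not done and whose outcome you have not verified even in an example. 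As written, the proof is incomplete.

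It is worth comparing with the paper's route, which sidesteps Hodge theory entirely. The paper uses the pointwise algebraic identity
\[
\iota_X\omega\W\iota_X\psip\W\psip=-2|X|^2\vol_g,
\]
integrates over $M$, and then writes $\psip=d\alpha$ so that, since $\iota_X\omega$ and $\iota_X\psip$ are both closed, the integrand is exact:
\[
-2\int_M|X|^2\vol_g=\int_M d\left(\iota_X\omega\W\iota_X\psip\W\alpha\right)=0 .
\]
This uses only the closedness of $\iota_X\omega$ and $\iota_X\psip$ together with the exactness of $\psip$ itself (not of $\iota_X\psip$), and no coclosedness statement is needed. If you want to salvage your approach, you would either have to carry out the torsion computation in full, or replace it by an integration-by-parts identity of this kind; the latter is both shorter and more robust.
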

\begin{proof}
The argument is analogous to the one used to show that exact $\G_2$-structures do not have non-trivial infinitesimal symmetries \cite{Fow}. 
Since $M$ is compact, the automorphism group $\mathrm{Aut}(M,\omega,\psip)$ is a compact Lie group with Lie algebra 
\[
\mathfrak{aut}(M,\omega,\psip) = \left\{X\in \Gamma(TM) \st \mathcal{L}_{X}\omega=0,~\mathcal{L}_{X}\psi=0 \right\}. 
\]
Consider an infinitesimal automorphism $X\in\mathfrak{aut}(M,\omega,\psip)$. Then, the forms $\iota_{X}\omega$ and $\iota_{X}\psip$ are closed.  
Now, using general identities involving the forms $\omega$ and $\psip$ (see e.g. \cite[Lemma 3.7]{FiRa}), we see that 
\[
\iota_X\omega\W\iota_X\psip\W\psip = -2 JX^\flat \W *_g(JX^\flat) = -2|X|^2 \vol_g. 
\]
Integrating the previous identity over $M$, using that $\psip = d\alpha$, for some $\alpha\in\Omega^2(M)$, and that the forms $\iota_{X}\omega$, $\iota_{X}\psip$ are closed, we obtain
\[
-2 \int_M |X|^2 \mbox{vol}_g = \int_M \iota_X\omega\W\iota_X\psip\W\psip  = \int_M d\left(\iota_X\omega\W\iota_X\psip\W\alpha \right).
\]
Using Stokes' Theorem, we then conclude that $X=0$. Consequently, $\mathfrak{aut}(M,\omega,\psip)  = \{0\}$, and the thesis follows. 
\end{proof}

We now introduce a class of SHF structures that generalizes the class of SHF structures with Hermitian Ricci tensor. 
\begin{definition}\label{specialSHF}
We say that a symplectic half-flat $\SU(3)$-structure $(\omega,\psip)$  is {\em special} if its torsion form $\Wd$ is non-vanishing and satisfies the following properties
\begin{enumerate}[i)]
\item\label{ispec} $\Delta_g \Wd = c\,\Wd$, for some real number $c$;\vspace{0.1cm}
\item\label{iispec} $d\Wd \W \Wd =0$;\vspace{0.1cm}
\item\label{iiispec} $|d\Wd|^2 = c\, |\Wd|^2$.
\end{enumerate}
\end{definition}

Before reviewing some known examples of special SHF structures, we discuss the properties \ref{ispec})--\ref{iiispec}) appearing in the definition. 
First, we observe that SHF structures with Hermitian Ricci tensor are always special with  $c = \frac{1}{4}\,|\Wd|^2$. 
\begin{proposition}\label{boundc}
Let $(\omega,\psip)$ be a symplectic half-flat $\SU(3)$-structure with Hermitian Ricci tensor. Then, $(\omega,\psip)$ is special, and its torsion form $\Wd$ 
satisfies the equation $\Delta_g \Wd = c\,\Wd$ for $c = \frac{1}{4}\,|\Wd|^2$.
\end{proposition}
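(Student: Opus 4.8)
The plan is to reduce everything to Proposition \ref{JRic} together with the structure of $d\Wd$ recorded in Lemma \ref{dwdlemma}. The Hermitian Ricci hypothesis is, by Proposition \ref{JRic}, equivalent to $\Delta_g\psip = \lambda\,\psip$ with $\lambda = \tfrac14|\Wd|^2$ a (constant) number. Since $(\omega,\psip)$ is half-flat we have $d\psip = 0$, so $\Delta_g\psip = dd^*\psip = d\Wd$. First, therefore, I would record the identity $d\Wd = \tfrac14|\Wd|^2\,\psip$. Comparing this with the canonical decomposition $d\Wd = \tfrac14|\Wd|^2\,\psip + \gamma$ of Lemma \ref{dwdlemma} forces the component $\gamma\in\Omega^3_{12}(M)$ to vanish, and hence (by the last assertion of that lemma, or directly from Proposition \ref{JRic}) shows that $|\Wd|$ is constant. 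This constancy is the conceptual crux: it is exactly what lets the scalar $c \coloneqq \tfrac14|\Wd|^2$ be treated as a constant in the computations below. Throughout I assume $\Wd\not\equiv 0$, i.e.\ that the structure is not torsion-free, since otherwise ``special'' is vacuous.

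With $c = \tfrac14|\Wd|^2$ constant, I would then verify the three defining properties of a special structure in turn. For property \ref{ispec}), recall from Lemma \ref{dwdlemma} that $\Wd$ is coclosed, so $\Delta_g\Wd = d^*d\Wd$; substituting $d\Wd = c\,\psip$ and pulling the constant $c$ through $d^*$ gives $\Delta_g\Wd = c\,d^*\psip = c\,\Wd$. For property \ref{iispec}), I would argue that $\Wd\wedge\psip = 0$: indeed $\Wd\in\Omega^2_8(M)$ consists of $J$-invariant $2$-forms, i.e.\ forms of type $(1,1)$, whereas $\psip$ is of type $(3,0)+(0,3)$, so the product lies in bidegrees $(4,1)+(1,4)$, which vanish in complex dimension three; consequently $d\Wd\wedge\Wd = c\,\psip\wedge\Wd = 0$. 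For property \ref{iiispec}), from $d\Wd = c\,\psip$ and the normalization \eqref{normalization} together with $\psim = *_g\psip$ one gets $|\psip|^2 = 4$, whence $|d\Wd|^2 = c^2\,|\psip|^2 = 4c^2 = c\,|\Wd|^2$.

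Since all three conditions hold with $c = \tfrac14|\Wd|^2$ and $\Wd\ne 0$, the structure is special. The proof is essentially an assembly of the cited results, so I do not expect a serious technical obstacle; the one step that needs a short independent argument is property \ref{iispec}), where the vanishing $\Wd\wedge\psip = 0$ is best seen via the bidegree observation above (equivalently, by Schur's lemma applied to the $\SU(3)$-equivariant map $\sigma\mapsto\sigma\wedge\psip$ from $\Lambda^2_8 V^*$ to $\Lambda^5 V^*$, two inequivalent irreducibles) rather than by a frame computation.
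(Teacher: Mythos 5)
Your proposal is correct and follows essentially the same route as the paper: Proposition \ref{JRic} gives $d\Wd = \tfrac14|\Wd|^2\,\psip$ with $|\Wd|$ constant, coclosedness of $\Wd$ yields condition i), the $(1,1)$-type of $\Wd$ yields ii), and a one-line norm computation (yours via $|\psip|^2=4$, the paper's via the wedge identity of Lemma \ref{dwdlemma}) yields iii). The only cosmetic difference is that you spell out the bidegree argument for $\Wd\wedge\psip=0$ and flag the non-vanishing of $\Wd$ explicitly, which the paper leaves implicit.
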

\begin{proof}
Since $\Delta_g\psip = dd^*\psip = d\Wd$, from Proposition \ref{JRic} we see that  
\[
d\Wd = \frac{1}{4}\,|\Wd|^2\,\psip, 
\]
and that $|\Wd|$ is constant. 
Since $\Wd$ is coclosed, taking the codifferential of the previous identity gives $\Delta_g \Wd = c\,\Wd$, with $c = \frac{1}{4}\,|\Wd|^2$. 
Moreover, the identity $d\Wd \W \Wd = 0$ immediately follows, as $\Wd$ is of type $(1,1)$ with respect to $J.$ 
Finally, using Lemma \ref{dwdlemma}, we see that 
\[
|d\Wd|^2 \vol_g = d\Wd \W *_g d\Wd =  \frac{1}{4}\,|\Wd|^2\,\psip \W *_g d\Wd =  \frac{1}{4}\,|\Wd|^4  \vol_g, 
\]
and thus $|d\Wd|_g^2 = c\, |\Wd|_g^2$. 
\end{proof}

In view of \eqref{dWdexpr}, the condition \ref{iispec}) is satisfied if and only if $\gamma\W\Wd = 0$, where $\gamma$ is the $\Omega^3_{12}(M)$-component of $d\Wd$. 
Moreover, it implies the identity $\Wd \W *_gd\Wd = 0$, since $*_g\gamma = J\gamma$ and $J\Wd=\Wd$  (see Section \ref{preliminaries}). 
The condition \ref{iispec}) is related to a general property discussed in Section \ref{LemmaSect}, and it will play a role in the proof of Theorem \ref{SolThm}. 
However, it might not be satisfied by the torsion form of a generic SHF structure. 

\smallskip

Under certain assumptions, the condition \ref{iiispec}) follows from \ref{ispec}), as the next result shows. 
\begin{proposition}\label{specialrelations}
Let $(\omega,\psip)$ be a symplectic half-flat $\SU(3)$-structure on a 6-manifold $M,$ and assume that $\Delta_g \Wd = c\,\Wd$, for some real number $c$. 
Then, the identity $|d\Wd|^2 = c\, |\Wd|^2$ holds in the following cases:
\begin{enumerate}[1)]
\item $M$ is compact and both $|d\Wd|$ and $|\Wd|$ are constant. This holds, in particular, if $(M,\omega,\psip)$ is compact and locally homogeneous;
\item $M=\G$ is a unimodular Lie group, and $(\omega,\psip)$ is left-invariant. 
\end{enumerate}
\end{proposition}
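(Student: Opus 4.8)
The plan is to reduce both cases to a single pointwise identity expressing that $\left(|d\Wd|^2 - c\,|\Wd|^2\right)\vol_g$ is an \emph{exact} top-form, and then to kill that exact term by compactness in case 1) and by unimodularity in case 2). First I would use Lemma \ref{dwdlemma}: since $\Wd = d^*\psip$ is coclosed, $dd^*\Wd = 0$, so the hypothesis $\Delta_g\Wd = c\,\Wd$ collapses to
\begin{equation*}
d^*d\Wd = c\,\Wd.
\end{equation*}
Next I would differentiate the $5$-form $\beta \coloneqq \Wd \wedge *_g d\Wd$. Expanding, and noting that the second term carries a plus sign because $\Wd$ has even degree,
\begin{equation*}
d\beta = d\Wd \wedge *_g d\Wd + \Wd \wedge d*_g d\Wd .
\end{equation*}
The first summand is exactly $|d\Wd|^2\,\vol_g$. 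For the second, I would turn $d^*d\Wd = c\,\Wd$ into a statement about $d*_g d\Wd$: in dimension six one has $d^* = -*_g d\, *_g$ on $3$-forms, whence $d*_g d\Wd = -c\,*_g\Wd$ and therefore $\Wd \wedge d*_g d\Wd = -c\,|\Wd|^2\,\vol_g$. Combining the two summands yields the key identity
\begin{equation*}
d\left(\Wd \wedge *_g d\Wd\right) = \left(|d\Wd|^2 - c\,|\Wd|^2\right)\vol_g .
\end{equation*}

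For case 1) I would integrate this identity over the compact manifold $M$; the left-hand side vanishes by Stokes' theorem, so $\int_M \left(|d\Wd|^2 - c\,|\Wd|^2\right)\vol_g = 0$. Since $|d\Wd|$ and $|\Wd|$ are assumed constant they factor out of the integral, and $\vol(M)>0$ forces $|d\Wd|^2 = c\,|\Wd|^2$ pointwise. To obtain the ``in particular'' assertion, I would observe that on a locally homogeneous $(M,\omega,\psip)$ the local automorphisms act transitively and preserve $\omega$ and $\psip$, hence also $g$, $\Wd$, $d\Wd$ and $*_g$; consequently the scalars $|\Wd|$ and $|d\Wd|$ are constant, which reduces this situation to the one just treated.

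For case 2) I would instead show that the left-hand side of the displayed identity vanishes identically. Here $\beta = \Wd \wedge *_g d\Wd$ is assembled entirely from left-invariant data (the form $\psip$, the left-invariant metric $g$, and $*_g$), hence is a left-invariant $5$-form. The decisive point is that on a unimodular Lie group every left-invariant $5$-form is closed: writing $\beta = \iota_V\vol_g$ for the unique $V\in\frg$ determined by $\beta$, one has $d\beta = d\,\iota_V\vol_g = \mathcal{L}_V\vol_g$, and this Lie derivative is proportional to $\tr\ad_V$, which vanishes by unimodularity. Thus $d\beta = 0$, and since $\vol_g$ is nowhere vanishing the identity again forces $|d\Wd|^2 = c\,|\Wd|^2$.

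The main obstacle is bookkeeping rather than conceptual. The two delicate points are getting the Hodge-star signs right so that $d*_g d\Wd = -c\,*_g\Wd$ (a Green's-type cross-check, $\int_M|d\Wd|^2\vol_g = \int_M\langle d^*d\Wd,\Wd\rangle\,\vol_g = c\int_M|\Wd|^2\vol_g$ in the compact case, confirms the minus sign), and justifying cleanly that unimodularity is exactly the hypothesis annihilating the top-degree exact term $d\beta$ in the non-compact setting, via the vanishing divergence of left-invariant vector fields. Once the displayed identity is established, both cases follow immediately.
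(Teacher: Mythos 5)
Your proposal is correct and follows essentially the same route as the paper: case 1) is the integration-by-parts identity $\left(d\Wd,d\Wd\right)_g=\left(\Delta_g\Wd,\Wd\right)_g$ combined with constancy of the norms, and case 2) uses that unimodularity forces the left-invariant $5$-form $\Wd\W *_g d\Wd$ to be closed, together with $d*_gd\Wd=-c\,*_g\Wd$. Packaging both cases through the single exact-form identity $d\left(\Wd\W *_g d\Wd\right)=\left(|d\Wd|^2-c\,|\Wd|^2\right)\vol_g$ is only a cosmetic difference, and your sign checks are accurate.
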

\begin{proof}\ 
\begin{enumerate}[1)]
\item Let $(\alpha,\beta)_g = \int_M\alpha\W*_g\beta$ denote the $L^2$ inner product on $\Omega^k(M)$ induced by $g$. 
Then,  
\[
|d\Wd|^2\, \mathrm{Vol}(M) = \left( d\Wd,d\Wd\right)_g = \left( d^*d\Wd,\Wd\right)_g = \left( \Delta_g\Wd,\Wd\right)_g = c\,|\Wd|^2\,  \mathrm{Vol}(M). 
\]
\item We can work on the Lie algebra $\frg$ of $\G$ with the SHF structure $(\omega,\psip)$ induced by the left-invariant one on $\G$. 
Let $d$ denote the Chevalley-Eilenberg differential of $\frg$. Since the Lie group is unimodular, every $5$-form on $\frg$ is $d$-closed. Consequently, we see that 
\[
|d\Wd|^2 \vol_g = d\Wd \W *_g d\Wd = d \left( \Wd \W *_g d\Wd \right) - \Wd \W d *_g d\Wd = - \Wd \W d *_g d\Wd. 
\]
Now, we have $c\,\Wd = \Delta_g \Wd = -*_g d\,*_g d \Wd$, whence it follows that $c\,*_g\Wd =  - d\,*_g d \Wd$. 
Combining this identity with the previous one, the thesis follows. 
\end{enumerate}
\end{proof}

In Definition \ref{specialSHF}, the eigenvalue $c$ is assumed to be a real constant. In the next result, we obtain a bound on its possible values that depends only on the condition \ref{iiispec}).  

\begin{proposition}\label{boundc}
Let $(\omega,\psip)$ be a symplectic half-flat $\SU(3)$-structure such that $|d\Wd|^2 = c\, |\Wd|^2$, for some real number $c$.  
Then, $c\geq \frac{|\Wd|^2}{4}$ and the equality holds if and only if $(\omega,\psip)$ has Hermitian Ricci tensor. 
\end{proposition}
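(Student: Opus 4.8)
The plan is to combine the decomposition of $d\Wd$ from Lemma \ref{dwdlemma} with the pointwise orthogonality of the irreducible summands of $\Omega^3(M)$. By \eqref{dWdexpr} one has $d\Wd = \frac{|\Wd|^2}{4}\,\psip + \gamma$ with $\gamma\in\Omega^3_{12}(M)$, and since $\Lambda^3_{\mathrm{Re}}V^*$ and $\Lambda^3_{12}V^*$ are $g$-orthogonal SU(3)-submodules (Section \ref{preliminaries}), the forms $\psip$ and $\gamma$ are pointwise orthogonal. Taking pointwise norms would then give
\[
|d\Wd|^2 = \frac{|\Wd|^4}{16}\,|\psip|^2 + |\gamma|^2.
\]

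First I would compute $|\psip|^2$ using the normalization \eqref{normalization} together with the identity $\psim = *_g\psip$: since $|\psip|^2\,\vol_g = \psip\W *_g\psip = \psip\W\psim = 4\,\vol_g$, one gets $|\psip|^2 = 4$. Substituting yields the pointwise identity $|d\Wd|^2 = \frac{|\Wd|^4}{4} + |\gamma|^2$, which combined with the hypothesis $|d\Wd|^2 = c\,|\Wd|^2$ gives
\[
\left(c - \frac{|\Wd|^2}{4}\right)|\Wd|^2 = |\gamma|^2 \geq 0.
\]
Dividing by $|\Wd|^2>0$ at points where $\Wd$ does not vanish then produces the desired bound $c\geq \frac{|\Wd|^2}{4}$.

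For the equality case I would argue that $c = \frac{|\Wd|^2}{4}$ holds precisely when $\gamma\equiv 0$. If $\gamma=0$, the last statement of Lemma \ref{dwdlemma} guarantees that $|\Wd|$ is constant, so that $\Delta_g\psip = dd^*\psip = d\Wd = \frac14|\Wd|^2\,\psip$ with constant eigenvalue, and Proposition \ref{JRic} then yields a Hermitian Ricci tensor; conversely, a Hermitian Ricci tensor forces $d\Wd = \frac14|\Wd|^2\psip$ by Proposition \ref{JRic}, hence $\gamma=0$ and $c=\frac{|\Wd|^2}{4}$. The only point requiring real care is this equality case: to invoke Proposition \ref{JRic} I need $\Delta_g\psip$ to be a \emph{constant} multiple of $\psip$, and this is exactly what the vanishing of $\gamma$ together with the resulting constancy of $|\Wd|$ from Lemma \ref{dwdlemma} provides. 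The rest of the argument is a direct pointwise computation with no further obstacle.
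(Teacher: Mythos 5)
Your proof is correct and follows essentially the same route as the paper: decompose $d\Wd = \frac{|\Wd|^2}{4}\psip + \gamma$ via Lemma \ref{dwdlemma}, use the $g$-orthogonality of the summands and $|\psip|^2=4$ to get $|d\Wd|^2 = \frac{|\Wd|^4}{4}+|\gamma|^2$, and characterize equality by $\gamma=0$ via Proposition \ref{JRic}. You merely fill in a couple of details the paper leaves implicit (the computation of $|\psip|^2$ from the normalization, and the constancy of the eigenvalue before invoking Proposition \ref{JRic}).
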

\begin{proof}
From the decomposition $d\Wd = \frac{|\Wd|^2}{4} \psip + \gamma$ given in Lemma \ref{dwdlemma}, we deduce that 
\[
|d\Wd|^2 = \frac{|\Wd|^4}{4} + |\gamma|^2, 
\]
as the summands are $g$-orthogonal and $|\psip|=2$. 
Then, the identity $|d\Wd|^2 = c\, |\Wd|^2$ implies $c \geq \frac{|\Wd|^2}{4}$. Moreover, $c  =  \frac{|\Wd|^2}{4}$ if and only if $|\gamma|=0$. 
This last condition is equivalent to $\Ric_g$ being Hermitian by Proposition \ref{JRic}. 
\end{proof}

Thus, among all SHF structures satisfying the condition \ref{iiispec}) of Definition \ref{specialSHF}, SHF structures with Hermitian Ricci tensor 
can be characterized as those for which the eigenvalue $c$ attains the minimum possible value. 

\smallskip

As we observed in \cite{FiRa}, SHF structures satisfying the conditions \ref{ispec})--\ref{iiispec}) of Definition \ref{specialSHF} occur on every unimodular Lie algebra admitting SHF structures. 
Thus, there exist various examples of compact 6-manifolds with a special SHF structure that are locally homogeneous. 
In the next proposition, we summarize some relevant properties of the examples considered in \cite{FiRa}.  
We refer the reader to Appendix \ref{appendix} for further details.  

\begin{proposition}[\cite{FiRa}]\label{SpecialFR}
Every unimodular Lie algebra admitting symplectic half-flat $\SU(3)$-structures also admits special symplectic half-flat $\SU(3)$-structures. 
The values of $c$ and $|\Wd|^2$ for an example of such a structure on each Lie algebra are summarized in the following list: 
\[
\renewcommand\arraystretch{1.5}
\begin{array}{rll}
\fre(1,1)\oplus\fre(1,1):			& c =	 2,			\quad& |\Wd|^2 = 8;\\
\frg_{5,1}\oplus\R:				& c =	 2,			\quad& |\Wd|^2 = 2; \\
A_{5,7}^{-1,-1,1}\oplus\R:			& c = 4,			\quad& |\Wd|^2 = 8;\\
A_{5,17}^{a,-a,1}\oplus\R:			& c =4a^2,		\quad& |\Wd|^2 = 8a^2;\\
\frg_{6,N3}:					& c = 6,			\quad& |\Wd|^2 = 6;\\
\frg_{6,38}^{0}:					& c = 6,			\quad& |\Wd|^2 = 6;\\
\frg_{6,54}^{0,-1}:				& c = 2,			\quad& |\Wd|^2 = 6;\\
\frg_{6,118}^{0,-1,-1}:			& c = 4,			\quad& |\Wd|^2 = 8.
\end{array}
\]
\end{proposition}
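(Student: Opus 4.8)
The plan is to prove the statement by an explicit, case-by-case construction on the finite list of algebras produced by the classification. Since a unimodular Lie algebra admitting symplectic structures must be solvable \cite{Chu}, and an SHF structure in particular carries a symplectic form, Theorem~\ref{solvableSHF} together with the abelian case exhausts all unimodular Lie algebras admitting SHF structures; the abelian algebra $\R^6$ may be discarded here, since every left-invariant SHF structure on it is torsion-free and therefore violates the requirement $\Wd\neq0$ of Definition~\ref{specialSHF}. For each of the eight algebras I would first fix an \emph{adapted} coframe $(e^1,\dots,e^6)$, that is, a constant linear change of the basis appearing in the structure equations of Theorem~\ref{solvableSHF}, bringing the SHF structure into the normal form
\[
\omega = e^{12}+e^{34}+e^{56}, \qquad \psip = e^{135}-e^{146}-e^{236}-e^{245}
\]
of \eqref{adaptedfr}. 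One then checks directly from the Chevalley--Eilenberg differential that $d\omega=0$ and $d\psip=0$, confirming that $(\omega,\psip)$ is genuinely SHF.

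Next I would extract the torsion form and verify the three conditions. Since $\psim=*_g\psip$ and $d\psim=\Wd\wedge\omega$ with $\Wd\in\Omega^2_8(M)$, the identity $*_g(\sigma\wedge\omega)=-\sigma$ for $\sigma\in\Lambda^2_8V^*$ gives $\Wd=-*_g d\psim$, from which $|\Wd|^2=\langle\Wd,\Wd\rangle_g$ is read off in the chosen coframe. By Lemma~\ref{dwdlemma} the form $\Wd$ is coclosed, so the Hodge Laplacian collapses to $\Delta_g\Wd=d^*d\Wd=-*_g\,d*_g d\Wd$; I would compute $d\Wd$, apply $*_g$, differentiate once more, and apply $*_g$ again, checking that the outcome equals the stated multiple $c\,\Wd$. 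This simultaneously verifies condition~\ref{ispec}) and pins down $c$. Condition~\ref{iispec}), $d\Wd\wedge\Wd=0$, is then a single wedge-product computation: in the case $\fre(1,1)\oplus\fre(1,1)$, where the value $c=\tfrac14|\Wd|^2$ appears, the computation yields $d\Wd$ proportional to $\psip$ (equivalently $\gamma=0$ in Lemma~\ref{dwdlemma}, so the structure has Hermitian Ricci tensor by Proposition~\ref{JRic}), whence \ref{iispec}) is immediate from $\psip\wedge\Wd=0$; in the remaining seven cases \ref{iispec}) is a genuine check. Finally, condition~\ref{iiispec}) requires no separate work: each structure is left-invariant on a unimodular group, so Proposition~\ref{specialrelations}(2) deduces $|d\Wd|^2=c\,|\Wd|^2$ directly from \ref{ispec}).

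Collecting the values of $c$ and $|\Wd|^2$ obtained in each case assembles the table. The computation has no conceptual content, so the main obstacle is bookkeeping: the structure equations of Theorem~\ref{solvableSHF} are not written in a coframe adapted to \eqref{adaptedfr}, and the most error-prone step is locating the correct constant change of basis bringing $(\omega,\psip)$ to normal form on each algebra. Once that coframe is fixed, the differential and the Hodge star act mechanically on the monomials $e^{ij\cdots}$, and the iterated application of $d$ and $*_g$ needed for $\Delta_g\Wd$ is the lengthiest but entirely routine part. I would organize the verification so that only the two genuinely independent inputs, the eigenvalue equation \ref{ispec}) and the wedge condition \ref{iispec}), are checked by hand, leaving \ref{iiispec}) to the unimodularity argument.
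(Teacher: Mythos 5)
Your proposal follows essentially the same route as the paper: the result is proved by exhibiting, on each of the eight algebras of Theorem~\ref{solvableSHF}, an explicit left-invariant SHF structure and verifying conditions \ref{ispec}) and \ref{iispec}) by direct computation of $\Wd$ and $d\Wd$ in a fixed coframe, with condition \ref{iiispec}) obtained for free from unimodularity via Proposition~\ref{specialrelations}(2) — exactly the data recorded in Appendix~\ref{appendix}. The one point you gloss over is that ``the'' SHF structure on each algebra must be chosen with care, since not every SHF structure on these algebras is special (e.g.\ the one on $\frg_{6,54}^{0,-1}$ from \cite[Ex.~3.1]{ToVe} is not), so the substantive content is the explicit choice of $(\omega,\psip)$ in each case rather than the verification procedure itself.
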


\begin{remark}\label{cWrelation}
The example on $\fre(1,1)\oplus\fre(1,1)$ has Hermitian Ricci tensor by Proposition \ref{boundc}, as $c = \tfrac14 |\Wd|^2$. 
In the remaining examples, the possible values of $c$ are $|\Wd|^2$, $\tfrac12 |\Wd|^2$ and $\tfrac13 |\Wd|^2$. 
\end{remark}

Summing up, special SHF structures exist on all known homogeneous and compact locally homogeneous spaces admitting invariant SHF structures, and the known examples have 
$c\in \left\{ |\Wd|^2, \tfrac12 |\Wd|^2, \tfrac13 |\Wd|^2, \tfrac14 |\Wd|^2 \right\}$, with $c = \tfrac14 |\Wd|^2$ if and only if the SHF structure has Hermitian Ricci tensor.  
This naturally leads to the question whether there exist further examples of 6-manifolds that admit a special SHF structure and that are not (locally) homogeneous. 
However, it may also happen that (some of) the conditions \ref{ispec})--\ref{iiispec}) force a SHF 6-manifold $(M,\omega,\psip)$ to be locally homogeneous. 
We leave this as an open problem. 

On the other hand, there exist examples of SHF structures that are not special according to Definition \ref{specialSHF}. 
These examples include the family of SHF structures  on the 6-torus constructed in \cite{DeBTom}, the complete cohomogeneity one SHF structure on $\mathbb{S}^3\times\R^3$ obtained in \cite{PoRa2}, 
and the SHF structure on the Lie algebra $\frg_{6,54}^{0,-1}$ given in \cite[Ex.~3.1]{ToVe}. 

\smallskip

Going back to Type IIA geometry, we introduce the following definition for those Type IIA geometries inducing special SHF structures. 
\begin{definition}\label{speciaIIAdef}
A Type IIA geometry $(\omega,\f)$ is {\em special} if $\f$ has constant norm and the corresponding symplectic half-flat $\SU(3)$-structure is special. 
\end{definition}
Since $\Wd = d^*\left(\frac{2}{|\f|}\f\right)$, the conditions \ref{ispec})--\ref{iiispec}) of Definition \ref{specialSHF} can be easily restated in terms of $d^*\f$.


\section{Two useful lemmas}\label{LemmaSect}
In this section, we discuss some results that are related to the properties of a special SHF structure and that might be of some interest in their own right. 

\smallskip

Let us consider a six-dimensional vector space $V$ with an SU(3)-structure $(\omega,\psip)$ inducing the complex structure $J$ and the metric $g$. 
The space $\Sym(V)$ of symmetric endomorphisms of $V$ decomposes into irreducible SU(3)-modules as follows
\[
\Sym(V) = \R\, \mathrm{Id} \oplus \Sym^{\sst+}_{\sst0}(V) \oplus  \Sym^{\sst-}(V), 
\]
where 
\[
\begin{split}
\Sym^{\sst+}_{\sst0}(V)	&\coloneqq \left\{A \in \Sym(V) \st AJ=JA \mbox{ and } \tr(A)=0 \right\},\\
\Sym^{\sst-}(V)			&\coloneqq \left\{S \in \Sym(V) \st  SJ=-JS \right\}.
\end{split}
\]
Moreover, we have the following isomorphisms of SU(3)-representations
\[
\begin{split}
\Sigma_8:\Sym^{\sst+}_{\sst0}(V) \rightarrow\Lambda^2_8V^*,		&\quad  A \mapsto g(AJ\cdot, \cdot), \\
\Sigma_{12}: \Sym^{\sst-}(V) \rightarrow \Lambda^3_{12}V^*,		&\quad  S \mapsto S_*\psip = -\psip(S\cdot,\cdot,\cdot) -\psip(\cdot,S\cdot,\cdot)-\psip(\cdot,\cdot,S\cdot), 
\end{split}
\]
see for instance \cite{BeVe,MNS}. 

\smallskip

The first result we discuss gives some insights on the condition \ref{iispec}) of Definition \ref{specialSHF} and singles out a basis that might be useful for computations involving 
special SHF structures. 
\begin{lemma}\label{Lemma1}
Consider two endomorphisms $A\in \Sym^{\sst+}_{\sst0}(V)$ and $S\in\Sym^{\sst-}(V)$, and let $\sigma \coloneqq \Sigma_8(A)\in\Lambda^2_8V^*$ and $\rho \coloneqq \Sigma_{12}(S)=S_*\psip \in \Lambda^3_{12}V^*$.  
Then, $A$ and $S$ commute if and only if $\sigma\W\rho=0$. 
Whenever this happens, there exists a unitary basis of $V$ formed by common eigenvectors of $A$ and $S$. 
\end{lemma}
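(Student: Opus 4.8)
The plan is to use that both conditions are algebraic and $\SU(3)$-equivariant, so their vanishing does not depend on the chosen unitary frame and may be tested in any convenient one. Since $A\in\Sym^{\sst+}_{\sst0}(V)$ is a trace-free Hermitian endomorphism of $(V,J)$, it is diagonalized by a unitary change of basis; I would therefore fix a unitary frame $(Z_1,Z_2,Z_3)$ of the $(1,0)$-space $V^{1,0}$, with dual $(1,0)$-coframe $(\zeta^1,\zeta^2,\zeta^3)$ adapted to $(\omega,\psip)$, in which $AZ_k=a_kZ_k$ with $a_k\in\R$ and $\sum_k a_k=0$. In such a frame $\sigma=\Sigma_8(A)$ is the $(1,1)$-form $\sigma=2i\sum_k a_k\,\zeta^k\W\bar\zeta^k$. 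Because $S\in\Sym^{\sst-}(V)$ is $J$-antilinear it maps $V^{1,0}$ to $V^{0,1}$, so $SZ_j=\sum_i S_{ij}\,\bar Z_i$ for complex numbers $S_{ij}$, and $g$-selfadjointness of $S$ forces $(S_{ij})$ to be a complex \emph{symmetric} matrix.

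First I would translate the commutator condition into linear algebra. Using $A\bar Z_i=a_i\bar Z_i$, a direct computation gives $[A,S]Z_j=\sum_i (a_i-a_j)\,S_{ij}\,\bar Z_i$, so that
\[
[A,S]=0\quad\Longleftrightarrow\quad (a_i-a_j)\,S_{ij}=0\ \ \text{for all } i,j .
\]
In particular the diagonal entries $S_{ii}$ are never constrained, and only off-diagonal entries between distinct eigenvalues of $A$ play a role.

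Next I would compute $\rho=S_*\psip$ and the product $\sigma\W\rho$ in the same frame. Writing the $(3,0)$-form as $\Psi=\psip+i\psim=\kappa\,\zeta^1\W\zeta^2\W\zeta^3$ for a nonzero constant $\kappa$, one checks that $S_*\Psi$ is of type $(2,1)$ and that the only surviving term is the one placing $S$ on the antiholomorphic slot, giving $S_*\Psi(Z_a,Z_b,\bar Z_c)=-\kappa\sum_k\overline{S_{kc}}\,\epsilon_{abk}$, while $\rho=\re(S_*\Psi)$. As $\sigma$ is real of type $(1,1)$, the form $\sigma\W\rho$ is the sum of $\tfrac12\,\sigma\W S_*\Psi\in\Lambda^{3,2}V^*$ and its conjugate, so $\sigma\W\rho=0$ if and only if $\sigma\W S_*\Psi=0$. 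Expanding $\sigma\W S_*\Psi$ and collecting, for $i<j$, the coefficient of the monomial $\zeta^1\W\zeta^2\W\zeta^3\W\bar\zeta^i\W\bar\zeta^j$, I expect to find it equal to a fixed nonzero multiple of $(a_i-a_j)\,\overline{S_{ij}}$, with no dependence on the $S_{kk}$. Comparing with the displayed commutator condition then yields the equivalence $[A,S]=0\iff\sigma\W\rho=0$. I would remark that this can also be seen representation-theoretically: both bilinear maps $(A,S)\mapsto[A,S]$ and $(A,S)\mapsto\sigma\W\rho$ take values in a copy of $V\cong\mathbf3\oplus\bar{\mathbf3}$, and factor through the multiplicity-one component $\frsu(3)^{\C}\otimes\Sym^2\C^3\to\bar{\mathbf3}$; hence each vanishes exactly when that single component does, so the two share the same kernel once one checks that neither map is identically zero.

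Finally, for the last assertion, once $A$ and $S$ commute they are commuting $g$-symmetric endomorphisms of the real space $V$ and are therefore simultaneously orthogonally diagonalizable. To upgrade a real common eigenbasis to a unitary one I would use that $J$ permutes the joint eigenspaces: setting $W_{a,\nu}=\{v\in V \st Av=av,\ Sv=\nu v\}$, the relations $AJ=JA$ and $SJ=-JS$ give $JW_{a,\nu}=W_{a,-\nu}$. For $\nu\neq0$ I would pick an orthonormal basis of $W_{a,\nu}$ and take its image under $J$ as a basis of $W_{a,-\nu}$; for $\nu=0$ the space $W_{a,0}$ is $J$-invariant and carries a unitary basis. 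Assembling these over all pairs $(a,\nu)$ produces a unitary basis adapted to $(\omega,\psip)$ consisting of common eigenvectors of $A$ and $S$. The main obstacle is the bookkeeping in the third step, namely computing $S_*\psip$ and the wedge $\sigma\W\rho$ with all signs correct and verifying that the diagonal entries $S_{kk}$ genuinely cancel, so that what survives is exactly the condition $(a_i-a_j)S_{ij}=0$; the representation-theoretic shortcut trades this bookkeeping for the separate verification that neither bilinear map vanishes identically.
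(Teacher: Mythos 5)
Your proposal is correct, and it reaches the conclusion by a genuinely different route from the paper. The paper works entirely in a real adapted frame: it writes out the $6\times6$ matrices of a generic $A\in\Sym^{\sst+}_{\sst0}(V)$ and $S\in\Sym^{\sst-}(V)$, computes $\sigma$, $\rho$ and the six coefficients $p_1,\dots,p_6$ of $\sigma\W\rho$ in $\Lambda^5V^*$ explicitly, and then observes that the matrix of $AS-SA$ is built from exactly the same six polynomials, so the two vanishing conditions coincide entry by entry. You instead complexify: diagonalizing $A$ on $V^{1,0}$ and encoding $S$ as a complex symmetric matrix reduces the commutator to $(a_i-a_j)S_{ij}=0$, and reduces $\sigma\W\rho$ to three complex coefficients proportional to $(a_i-a_j)\overline{S_{ij}}$ (I checked the sketched coefficient computation; the diagonal entries $S_{kk}$ do drop out as you predict, because they would force a repeated $\bar\zeta^k$). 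This is cleaner and more conceptual than the paper's brute-force identity, and your Schur-lemma remark (both bilinear maps land in the multiplicity-one copy of $\bar{\mathbf 3}$ inside $\mathbf 8\otimes\mathbf 6$) gives an honest shortcut once one checks neither map is identically zero; the price is that you do not obtain the explicit correspondence $p_i\leftrightarrow(AS-SA)_{jk}$ that the paper's computation yields for free. Your construction of the common unitary eigenbasis via $JW_{a,\nu}=W_{a,-\nu}$ is also more detailed than the paper's one-line assertion. One small terminological correction: at the end you call the resulting basis ``adapted to $(\omega,\psip)$,'' but in this paper \emph{adapted} means special unitary (i.e.\ $\psip$ takes the standard form); the common eigenbasis is in general only unitary, and the paper explicitly records the residual phase $\theta$ in \eqref{u3frame}, with adaptedness holding only when $\theta=0$. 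The lemma claims only a unitary basis, so this does not affect correctness.
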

\begin{proof}
Let $\mathcal{B}=(e_1,\ldots,e_6)$ be a $g$-orthonormal basis of $V$ that is adapted to $(\omega,\psip)$, and denote by $\mathcal{B}^*=(e^1,\ldots,e^6)$ its dual basis. 
Two generic symmetric endomorphisms $A\in\Sym^{\sst+}_{\sst0}(V) $ and $S \in \Sym^{\sst-}(V)$ have the following matrix representations with respect to the basis $\mathcal{B}$ 
\[
A = 
\begin{pmatrix}
a_{11} & 0 & a_{13} & a_{14} & a_{15} & a_{16} \\
0 & a_{11} & -a_{14} & a_{13} & -a_{16} & a_{15} \\
a_{13} & -a_{14} & a_{33} & 0 & a_{35} & a_{36} \\
a_{14} & a_{13} & 0 & a_{33} & -a_{36} & a_{35} \\
a_{15} & -a_{16} & a_{35} & -a_{36} & -a_{11}-a_{33} & 0 \\
a_{16} & a_{15} & a_{36} & a_{35} & 0 & -a_{11}-a_{33} 
\end{pmatrix},
\]
\[
S =
\begin{pmatrix}
s_{11} & s_{12} & s_{13} & s_{14} & s_{15} & s_{16} \\
s_{12} & -s_{11} & s_{14} & -s_{13} & s_{16} & -s_{15} \\
s_{13} & s_{14} & s_{33} & s_{34} & s_{35} & s_{36} \\
s_{14} & -s_{13} & s_{34} & -s_{33} & s_{36} & -s_{35} \\
s_{15} & s_{16} & s_{35} & s_{36} & s_{55} & s_{56} \\
s_{16} & -s_{15} & s_{36} & -s_{35} & s_{56} & -s_{55} 
\end{pmatrix}, 
\]
for some real numbers $a_{ij}$ and $s_{kl}$. 

Using the correspondences $\sigma = \Sigma_8(A)$ and $\rho = \Sigma_{12}(S) = S_*\psip$, we obtain
\[
\begin{split}
\sigma 	= &\ a_{1 1} e^{12} - a_{1 4}  e^{13} + a_{1 3} e^{14}- a_{1 6} e^{15}+ a_{1 5} e^{16}- a_{1 3} e^{23} - a_{1 4} e^{24} - a_{1 5} e^{25}- a_{1 6}e^{26} \\ 
		& + a_{3 3}e^{34}- a_{3 6} e^{35}+ a_{3 5} e^{36} - a_{3 5} e^{45}- a_{3 6} e^{46}+ (-a_{1 1} - a_{3 3}) e^{56}, \\
\rho		= &\  2s_{1 6} e^{123} + 2s_{1 5}e^{124} - 2s_{1 4}e^{125}- 2s_{1 3}e^{126} - 2s_{3 6}e^{134}+ (-s_{5 5} - s_{1 1} - s_{3 3})e^{135}\\
		& + (-s_{5 6} + s_{1 2} + s_{3 4})e^{136}+ (s_{5 6} + s_{1 2} - s_{3 4})e^{145} + (-s_{5 5} + s_{1 1} - s_{3 3})e^{146} + 2s_{3 6}e^{156}\\ 
		& - 2s_{3 5}e^{234} + (s_{5 6} - s_{1 2} + s_{3 4})e^{235}+ (-s_{5 5} - s_{1 1} + s_{3 3})e^{236} + (s_{5 5} - s_{1 1} - s_{3 3})e^{245} \\ 
		&+ (s_{5 6} + s_{1 2} + s_{3 4})e^{246}+ 2s_{3 5}e^{256} + 2s_{1 4}e^{345}+ 2s_{1 3}e^{346}- 2s_{1 6}e^{356}- 2s_{1 5}e^{456}.
\end{split}
\]
Now, a computation shows that 
\[
\sigma \W \rho = p_1 e^{12345} + p_2 e^{12346} + p_3 e^{12356} + p_4 e^{12456} + p_5 e^{13456} + p_6 e^{23456}, 
\]
where $p_1,\ldots,p_6$ are certain homogeneous polynomials of degree 2 in $a_{ij}$ and $s_{kl}$,    
and that the matrix representation of the skew symmetric endomorphism $AS-SA$ with respect to the basis $\mathcal{B}$ is given by 
\[
AS-SA = \frac12
\begin{pmatrix}
0 & 0 & p_2 & p_1 & -p_4 & -p_3 \\
0 & 0& p_1 & -p_2 & -p_3 & p_4 \\
-p_2 & -p_1 & 0 & 0 & p_6 & p_5 \\
-p_1 & p_2 & 0 & 0 & p_5 & -p_6 \\
p_4 & p_3 & -p_6 & -p_5 & 0 & 0 \\
p_3 & -p_4 & -p_5 & p_6 & 0 & 0
\end{pmatrix}. 
\]
From this, we see that $A$ and $S$ commute if and only if $\sigma\W\rho=0$. 
Consequently, whenever $\sigma\W\rho=0$, there exists a basis $\tilde{\mathcal{B}} = (\tilde{e}_1,\ldots,\tilde{e}_6)$ of $V$ formed by common eigenvectors of $A$ and $S$ that is unitary, namely 
$\tilde{\mathcal{B}}$ is $g$-orthonormal and $J\tilde{e}_{2k-1} = \tilde{e}_{2k}$, for $k=1,2,3$. 
More in detail, the spectrum of $A$ is of the form $(\lambda_1,\lambda_1,\lambda_2,\lambda_2,\lambda_3,\lambda_3)$, for some $\lambda_1,\lambda_2,\lambda_3\in\R$ with $\lambda_1+\lambda_2+\lambda_3=0$, 
the spectrum of $S$ is of the form $(\mu_1,-\mu_1,\mu_2,-\mu_2,\mu_3,-\mu_3)$, for some $\mu_1,\mu_2,\mu_3\in\R$, and we have
\[
\begin{split}
&A\te_1=\lambda_1\te_1,~ A\te_2=\lambda_1\te_2,~ A\te_3=\lambda_2\te_3,~ A\te_4=\lambda_2\te_4,~ A\te_5=\lambda_3\te_5,~ A\te_6=\lambda_3\te_6,\\
&S\te_1=\mu_1\te_1,~ S\te_2=-\mu_1\te_2,~ S\te_3=\mu_2\te_3,~ S\te_4=-\mu_2\te_4,~ S\te_5=\mu_3\te_5,~ S\te_6=-\mu_3\te_6. 
\end{split}
\]
Since the basis $\tilde{\mathcal{B}}$ is unitary, the expressions of the forms $\omega$, $\psip$ and $\psim$ with respect to the dual basis $\tilde{\mathcal{B}}^* = (\tilde{e}^1,\ldots,\tilde{e}^6)$ are the following
\begin{equation}\label{u3frame}
\begin{split}
\omega	&= \te^{12}+\te^{34}+\te^{56}, \\
\psip		&= \cos(\theta)\left(\te^{135}-\te^{146}-\te^{236}-\te^{245}\right) + \sin(\theta) \left(\te^{136}+\te^{145}+\te^{235}-\te^{246}\right),\\
\psim	&= \cos(\theta) \left(\te^{136}+\te^{145}+\te^{235}-\te^{246}\right) -\sin(\theta)\left(\te^{135}-\te^{146}-\te^{236}-\te^{245}\right),
\end{split}
\end{equation}
for some $\theta\in[0,2\pi)$. In particular, $\theta=0$ if and only if the basis $\tilde{\mathcal{B}}$ is special unitary, namely if and only if it is adapted to the SU(3)-structure $(\omega,\psip)$. 
\end{proof} 

\smallskip

From the previous discussion, we know that, at each point $x$ of a manifold $M$ with a special SHF structure $(\omega,\psip)$,  
the component of $d\Wd$ in $\Lambda^3_{12}(T^*_xM)$ is of the form $S_*\psip$, for some $S\in\Sym^{\sst-}(T_xM)$. 
In the known examples where such a component is not zero, i.e., those whose Ricci tensor is not Hermitian, the endomorphism $S$ satisfies some constraints.  
In detail, one of the following occurs:
\begin{enumerate}[$\bullet$]
\item $S$ has rank 2 and its spectrum is of the form $(0,0,0,0,\mu,-\mu)$, with $\mu= \pm\tfrac14\,|\Wd|^2$; 
\item $S$ has rank 6 and its spectrum is of the form  $(\mu,-\mu,\mu,-\mu,\mu,-\mu)$, with either $\mu = - \tfrac14\,|\Wd|^2$ or $\mu=\tfrac{1}{12}\,|\Wd|^2$,
\end{enumerate}
see Appendix \ref{appendix} for the explicit expressions. 
The first possibility holds for the examples on the Lie algebras $A_{5,7}^{-1,-1,1}\oplus\R$, $A_{5,17}^{a,-a,1}\oplus\R$, $\frg_{6,118}^{0,-1,-1}$, 
while the second possibility holds for the examples on $\frg_{5,1}\oplus\R$, $\frg_{6,N3}$, $\frg_{6,38}^{0}$ ($\mu = - \tfrac14\,|\Wd|^2$) and $\frg_{6,54}^{0,-1}$ ($\mu=\tfrac{1}{12}\,|\Wd|^2$). 
Notice that, in both cases, there exists a basis formed by eigenvectors of $S$ that is adapted to the SU(3)-structure $(\omega,\psip)$. 

\smallskip

Let us consider again a vector space $V$ with an SU(3)-structure $(\omega,\psip)$.  
As we observed in the proof of Lemma \ref{Lemma1}, the spectrum of an endomorphism $S\in\Sym^{\sst-}(V)$ is of the form $(\mu_1,-\mu_1,\mu_2,-\mu_2,\mu_3,-\mu_3)$, 
for some real numbers $\mu_1,\mu_2,\mu_3$. 
We now show a result that, when applied to a manifold with a special SHF structure, gives a (pointwise) relation among the constant $c$ appearing in Definition \ref{specialSHF}, the norm of the torsion form $\Wd$, 
and the eigenvalues of $S$. More generally, this result holds for every SHF structure satisfying condition \ref{iiispec}) of Definition \ref{specialSHF}. 
\begin{lemma}\label{eigenconstr}
Let $\rho = S_*\psip\in\Lambda^3_{12}V^*$, where $S\in\Sym^{\sst-}(V)$, and let $w$ and $c$ be non-zero real constants. 
Consider the $3$-form $\beta = \frac{w^2}{4}\psip + \rho \in \Lambda^3_{\mathrm{Re}}V^*\oplus\Lambda^3_{12}V^*$, and assume that $|\beta|^2 = c\,w^2$. Then
\[
\mu_1^2 + \mu_2^2+\mu_3^2 = \frac14\, w^2 \left(c-\frac14 w^2 \right), 
\]
where $(\mu_1,-\mu_1,\mu_2,-\mu_2,\mu_3,-\mu_3)$ is the spectrum of $S$. 
\end{lemma}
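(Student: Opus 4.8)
The plan is to reduce the statement to computing the norm $|\rho|^2 = |S_*\psip|^2$ purely in terms of the eigenvalues of $S$, and then to match this against the hypothesis $|\beta|^2 = c\,w^2$.

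First I would exploit the $g$-orthogonality of the submodules $\Lambda^3_{\mathrm{Re}}V^* = \R\,\psip$ and $\Lambda^3_{12}V^*$ appearing in the decomposition \eqref{lambda3}. Since $\beta = \frac{w^2}{4}\psip + \rho$ with $\rho\in\Lambda^3_{12}V^*$, and $|\psip| = 2$, orthogonality gives
\[
|\beta|^2 = \frac{w^4}{16}\,|\psip|^2 + |\rho|^2 = \frac{w^4}{4} + |\rho|^2.
\]
Imposing $|\beta|^2 = c\,w^2$ then forces $|\rho|^2 = w^2\bigl(c - \tfrac14 w^2\bigr)$, so the lemma will follow at once provided I establish the identity $|S_*\psip|^2 = 4\,(\mu_1^2+\mu_2^2+\mu_3^2)$.

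To prove this identity I would pass to a unitary eigenbasis of $S$, which exists because $S$ is symmetric and anticommutes with $J$ (this is Lemma \ref{Lemma1} applied with $A=0$): there is a $g$-orthonormal basis $\tilde{\mathcal B} = (\te_1,\ldots,\te_6)$ with $J\te_{2k-1} = \te_{2k}$ and $S\te_{2k-1} = \mu_k\te_{2k-1}$, $S\te_{2k} = -\mu_k\te_{2k}$, in which $\psip$ takes the form \eqref{u3frame} for some angle $\theta$. Since $S$ is diagonal in this basis, the operator $S_*$ multiplies each decomposable trivector $\te^{ijk}$ by the scalar $-(s_i + s_j + s_k)$, where $(s_1,\ldots,s_6) = (\mu_1,-\mu_1,\mu_2,-\mu_2,\mu_3,-\mu_3)$. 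Expanding $S_*\psip$ against the orthonormal trivectors and summing the squared coefficients, the mixed terms $\mu_i\mu_j$ cancel in pairs, leaving $4(\mu_1^2+\mu_2^2+\mu_3^2)$; I would also verify that the contribution of the $\sin\theta$-part of \eqref{u3frame} equals that of the $\cos\theta$-part, so that the total is $(\cos^2\theta+\sin^2\theta)\cdot 4(\mu_1^2+\mu_2^2+\mu_3^2)$ and the outcome is independent of $\theta$ and of the chosen eigenbasis.

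The only genuine content is this last norm computation, which is the main (if modest) obstacle: one must check that the cross terms $\mu_i\mu_j$ cancel and that the answer does not depend on $\theta$. As an independent confirmation, I would note that $\Sigma_{12}$ is an isomorphism of \emph{irreducible} SU(3)-modules, so by Schur's lemma $|S_*\psip|^2 = \kappa\,\tr(S^2)$ for a universal constant $\kappa$; evaluating both sides on a single rank-two $S$ fixes $\kappa = 2$, and since $\tr(S^2) = 2(\mu_1^2+\mu_2^2+\mu_3^2)$ this recovers the same identity without reference to a frame. Combining $|\rho|^2 = 4(\mu_1^2+\mu_2^2+\mu_3^2)$ with $|\rho|^2 = w^2\bigl(c - \tfrac14 w^2\bigr)$ and dividing by $4$ yields the claimed relation.
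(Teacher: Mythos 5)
Your proposal is correct and follows essentially the same route as the paper's proof: the $g$-orthogonality of $\Lambda^3_{\mathrm{Re}}V^*$ and $\Lambda^3_{12}V^*$ reduces the claim to showing $|\rho|^2 = 4(\mu_1^2+\mu_2^2+\mu_3^2)$, which both you and the paper obtain by passing to a unitary eigenbasis of $S$, writing $\psip$ in the form \eqref{u3frame}, and summing the squared coefficients of $S_*\psip$ (where the cross terms $\mu_i\mu_j$ cancel and the $\theta$-dependence drops out). The Schur-lemma cross-check is a pleasant extra not present in the paper, but the core argument is the same.
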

\begin{proof}
Since $S$ is symmetric and anticommutes with $J,$ there exists a unitary basis of $V$ formed by eigenvectors of $S$. 
Let $\mathcal{B}=(e_1,\ldots,e_6)$ be such a basis and denote by $\mathcal{B}^*=(e^1,\ldots,e^6)$ its dual basis. 
Recall that $\mathcal{B}$ is $g$-orthonormal and that $J{e}_{2k-1} = {e}_{2k}$, for $k=1,2,3$.
Moreover, we have
\[
Se_1=\mu_1e_1,~ Se_2=-\mu_1e_2,~ Se_3=\mu_2e_3,~ Se_4=-\mu_2e_4,~ Se_5=\mu_3e_5,~ Se_6=-\mu_3e_6. 
\]
As we recalled in the proof of Lemma \ref{Lemma1}, there exists some $\theta\in[0,2\pi)$ such that $\psip$ and $\psim$ can be written as in \eqref{u3frame} with respect to $\mathcal{B}^*$. 

Since $*_g\beta = \frac{w^2}{4}\psim + *_g\rho$ and the spaces $\Lambda^3_{\mathrm{Re}}V^*$ and $\Lambda^3_{12}V^*$ are $g$-orthogonal, the condition $|\beta|^2 = c\,w^2$ can be rewritten as follows
\[
c\,w^2 = |\beta|^2 = *_g(\beta\W *_g\beta) = \frac14\,w^4 + |\rho|^2. 
\]
To get the thesis, it is then sufficient to compute $|\rho|^2$. With respect to the basis $\mathcal{B}^*$, we have 
\[\begin{split}
\rho 	&=   -\cos(\theta) \left[(\mu_1+\mu_2+\mu_3)\,e^{135} + (-\mu_1+\mu_2+\mu_3)\,e^{146} + (\mu_1-\mu_2+\mu_3)\,e^{236}\right.\\
	&\quad \left. +(\mu_1+\mu_2-\mu_3)\,e^{245}\right] -\sin(\theta) \left[ (\mu_1+\mu_2-\mu_3)\, e^{136} + (\mu_1-\mu_2+\mu_3)\, e^{145}  \right.\\
	&\quad \left. + (-\mu_1+\mu_2+\mu_3)\, e^{235}+ (\mu_1+\mu_2+\mu_3)\,e^{246}\right], 
\end{split}
\]
and from this we obtain $|\rho|^2 = 4\left(\mu_1^2 + \mu_2^2 + \mu_3^2 \right)$. 
\end{proof}

From the previous lemma, we see that if $S$ has spectrum $(0,0,0,0,\mu,-\mu)$, for some $\mu\in\R$,  then 
\begin{equation}\label{murank2}
\mu^2 = \frac14\, w^2 \left(c-\frac14 w^2 \right), 
\end{equation}
while if $S$ has spectrum $(\mu,-\mu,\mu,-\mu,\mu,-\mu)$, then 
\begin{equation}\label{murank6}
\mu^2 = \frac{1}{12}\, w^2 \left(c-\frac14 w^2 \right). 
\end{equation}
Moreover, $S=0\in\Sym^{\sst-}(V)$ if and only if $c=\frac14 w^2$. This is consistent with Proposition \ref{boundc}.


\section{The source-free type IIA flow starting at a special Type IIA geometry}\label{FlowSect}

We now focus on the source-free Type IIA flow starting at a special Type IIA geometry $(\omega,\fz)$ on a 6-manifold $M$
\begin{equation}\label{IIAflow}
\begin{cases}
\frac{\partial}{\partial t} \f(t) =  dJ_td^{*_t}\left(|\f(t)|_{g(t)}^2\f(t) \right),  \\
\f(0) = \f_{\sst0}, 
\end{cases}
\end{equation}
where we let $J_t\coloneqq J_{\f(t)}$. Recall that, by definition, the norm of $\fz$ is assumed to be constant. 
In the following, we shall use the subscript 0 to denote all tensors induced by $(\omega,\fz)$ but the torsion form of the corresponding SHF structure $(\omega,\psip_{\sst0})$  
\begin{equation}\label{w2special}
\Wd = \frac{2}{|\fz|_{\sst0}}d^{*_0}\fz = d^{*_0} \left( \frac{2}{|\fz|_{\sst0}}\fz\right). 
\end{equation} 
Here and henceforth, the symbol $|\cdot|_{\sst0}$ denotes a norm induced by $g_{\sst0}$. 

\smallskip

We begin considering the case where the metric $g_{\sst0}$ has Hermitian Ricci tensor. 
Under this assumption, we can show that the solution to the Type IIA flow \eqref{IIAflow} evolves only by scaling the initial datum $\fz$, namely it is {\em self-similar}. 
\begin{theorem}\label{JHermFlow}
Let $(\omega,\fz)$ be a Type IIA geometry such that $\Fz \coloneqq |\fz|_{\sst0}$ is constant and the Ricci tensor of $g_{\sst0}$ is Hermitian. Then, a solution to the source-free Type IIA flow starting at $\fz$ is
\[
\f(t) = \frac{1}{\sqrt{1 - 2 c \Fz^2 t}} \, \fz, 
\]
where $c=\tfrac14|\Wd|^2_{\sst0}$, and it exists for all $t \in \left(-\infty, \frac{1}{2 c \Fz^2}\right)$. This is the unique solution to the flow when the manifold is compact. 
Moreover, $\frac{1}{|\f(t)|_{g(t)}}\f(t) = \frac{1}{\Fz}\fz$, and the $\SU(3)$-structure corresponding to $(\omega,\f(t))$ is constant along the flow. 
\end{theorem}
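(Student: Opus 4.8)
The plan is to look for a \emph{self-similar} solution by making the ansatz $\f(t) = f(t)\,\fz$ for a positive scalar function $f$ with $f(0)=1$, thereby reducing the PDE to a single ODE for $f$. The guiding principle is that scaling a definite 3-form by a spatially constant positive factor changes none of the derived objects except by explicit scalars.

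First I would record the consequences of the ansatz. Since rescaling a definite 3-form by a positive scalar leaves the induced almost complex structure unchanged, $J_t = J_{\f(t)} = J_{\fz} = \Jz$ for all $t$; as $\omega$ is fixed and $g_t=\omega(\cdot,J_t\cdot)$, this forces the metric $g(t)=\gz$, and hence the Hodge star $*_{g(t)} = *_{\gz}$ and the codifferential $d^{*_t}=d^{*_0}$, to remain \emph{frozen} along the flow. This is what makes the entire right-hand side collapse to an explicit multiple of $\fz$.

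Next I would evaluate the right-hand side under the ansatz. Using $|\f(t)|_{g(t)}^2 = f(t)^2\,\Fz^2$ and pulling the spatially constant scalar through the frozen operator $d^{*_0}$, together with $d^{*_0}\fz = \tfrac{\Fz}{2}\,\Wd$ from \eqref{w2special} and $J_0\Wd = \Wd$ (valid since $\Wd\in\Omega^2_8(M)$), the computation reduces to differentiating $\Wd$. The crucial step, and the only genuinely non-formal point, is the identity $d\Wd = c\,\psip_{\sst0} = \tfrac{2c}{\Fz}\,\fz$ with $c=\tfrac14|\Wd|^2_{\sst0}$: this is where the Hermitian Ricci hypothesis enters, via Proposition \ref{JRic} together with $\Delta_{\gz}\psip_{\sst0} = dd^{*_0}\psip_{\sst0} = d\Wd$. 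Everything else is bookkeeping of scalars.

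Assembling these computations turns the flow into $f'(t)\,\fz = c\,\Fz^2 f(t)^3\,\fz$, i.e.\ the separable ODE $f' = c\Fz^2 f^3$ with $f(0)=1$, whose solution is $f(t) = (1-2c\Fz^2 t)^{-1/2}$; its maximal interval of existence is $(-\infty,\tfrac{1}{2c\Fz^2})$ since $c>0$ keeps the radicand positive precisely there. For the remaining assertions, $|\f(t)|_{g(t)} = f(t)\Fz$ gives $\tfrac{1}{|\f(t)|_{g(t)}}\f(t) = \tfrac{1}{\Fz}\fz$, so the normalized form $\psip(t) = \tfrac{2}{|\f(t)|}\f(t)$ equals $\psip_{\sst0}$ and, $\omega$ being fixed, the induced $\SU(3)$-structure is constant along the flow. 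Finally, uniqueness in the compact case is not argued by hand but inherited from the well-posedness of the source-free Type IIA flow established in \cite{FPPZ}: having exhibited a solution, it must be the unique one.
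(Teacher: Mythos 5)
Your proposal is correct and follows essentially the same route as the paper: the self-similar ansatz $\f(t)=f(t)\,\fz$, the observation that $J_t$, $g(t)$ and hence $d^{*_t}$ are frozen, the use of Proposition \ref{JRic} to get $dd^{*_0}\fz=\Delta_{g_0}\fz=c\,\fz$ (the paper phrases this directly via $\Delta_{g_0}\fz$ rather than through $d\Wd=c\,\psip_{\sst0}$, but this is the same identity), and the reduction to the ODE $f'=c\Fz^2 f^3$ with uniqueness inherited from the well-posedness result of \cite{FPPZ}. The only cosmetic difference is your detour through $\Wd$ and \eqref{w2special}, which changes nothing of substance.
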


\begin{proof}
Since $|\fz|_{\sst0}$ is constant and the Ricci tensor of $g_{\sst0}$ is Hermitian, by Proposition \ref{JRic} we know that $|\Wd|^2_{\sst0}$ is constant and we obtain 
\[
\Delta_{\gz}\fz = \frac{|\Wd|^2_{\sst0}}{4} \fz = c\,\fz. 
\]
This suggests the following Ansatz for the solution of \eqref{IIAflow}
\[
\f(t) = h(t)\,\fz, 
\]
where $h(t)$ is a real valued smooth function defined in some neighborhood of $0\in\R$ and such that $h(0)=1$. 
Since $\f(t)$ is proportional to $\fz$, the almost complex structure $J_t$ induced by $(\omega,\f(t))$ coincides with $\Jz$. Consequently, $g(t) = \gz$ as long as $\f(t)$ exists, and we see that 
$|\f(t)|_{g(t)}^2 = \Fz^2\,h(t)^2$ is a function of $t$ only. 
Now, since $\fz$ is closed and $d^{*_0}\fz$ is $\Jz$-invariant, the RHS of the flow equation in \eqref{IIAflow} can be rewritten as follows
\[
dJ_td^{*_t}\left(|\f(t)|_{g(t)}^2\f(t) \right) = \Fz^2\,h(t)^3\, d\Jz d^{*_0} \fz = \Fz^2\,h(t)^3\, \Delta_{\gz}\fz =  \Fz^2\,h(t)^3\,c\,\fz.
\]
Consequently, under our Ansatz, the Type IIA flow starting at $\fz$ is equivalent to the following initial value problem for $h(t)$ 
\[
\begin{cases}
\frac{d}{dt}h(t) =  \Fz^2\,c\,h(t)^3, \\
h(0)=1.
\end{cases}
\]
From this, the thesis follows. 
\end{proof}

Examples where the previous theorem applies include the homogeneous spaces $\SU(2,1)/\mathrm{T}^2$ and  $\SO_{\sst0}(4,1)/\U(2)$
with their invariant SHF structures described in \cite{PoRa1}, and the Lie algebra $\fre(1,1)\oplus \fre(1,1)$ with the SHF structure given in \cite[Thm.~3.5]{ToVe}. 
The solution to the Type IIA flow on this last space was investigated in \cite[Sect.~9.3.2]{FPPZ}, where the authors considered a family of SHF structures including the one with Hermitian Ricci tensor. 

\smallskip

We now focus on the more general case where $(\omega,\fz)$ is a special Type IIA geometry. 
 Let $(\omega,\psip_{\sst0})$ be the special SHF structure corresponding to $(\omega,\fz)$. 
Then, $d\psim_{\sst0} = \Wd\W\omega = -*_{g_0}\Wd$, where the torsion form $\Wd$ is given by \eqref{w2special} 
and it satisfies the conditions $\Delta_{g_0}\Wd = c\,\Wd$,  $d\Wd \W \Wd=0$, and $|d\Wd|^2_{\sst0} = c\, |\Wd|_{\sst0}^2$. 
According to the discussion in Section \ref{LemmaSect}, 
the component of $d\Wd$ in $\Omega^3_{12}(M)$ is of the form $S_*\psip$, for some section $S$ of the bundle $\Sym^{\sst-}(TM)$ of $g_{\sst0}$-symmetric endomorphisms anticommuting with $\Jz.$ 

Assuming that $(M,\omega,\fz)$ is locally homogeneous, we get the additional condition $|\Wd|_{\sst0}\in\R_{\sst+}$  
and we see that $S_x\in\Sym^{\sst-}(T_xM)$ has the same spectrum at every point $x$ of $M.$  
As we observed in Section \ref{LemmaSect}, all known examples of this type have some restrictions on $c$ and on $S$. 
In detail, if $S$ is not zero, then its rank is either 2 or 6 and it has two non-zero eigenvalues $\pm\mu$, for some $\mu\in\R\smallsetminus\{0\}$. 
Moreover, $c$, $\mu$ and $|\Wd|_{\sst0}$ are related by one of the equations \eqref{murank2} and \eqref{murank6}. 
The possible values of $c$ are $|\Wd|^2_{\sst0}$, $\tfrac12 |\Wd|^2_{\sst0}$ and $\tfrac13 |\Wd|^2_{\sst0}$, and the corresponding values of $\mu$ are $-\tfrac14|\Wd|^2_{\sst0}$, $\pm\tfrac14|\Wd|^2_{\sst0}$,   
and  $\tfrac{1}{12}|\Wd|^2_{\sst0}$, respectively. 
On the other hand, $S$ is zero if and only if the SHF structure $(\omega,\psip_{\sst0})$ has Hermitian Ricci tensor, and thus $c=\tfrac14|\Wd|^2_{\sst0}$. 

Motivated by Theorem \ref{JHermFlow}, we look for a suitable Ansatz that allows us to determine the solution of the flow \eqref{IIAflow} when $(M,\omega,\fz)$ is locally homogeneous and 
the special SHF structure satisfies one of the previous conditions. 
We consider the following 
\begin{equation}\label{solspec}
\f(t) = \fz + \frac{a(t)}{\Fz}\, \Delta_{g_0}\fz, 
\end{equation}
where $\Fz \coloneqq |\fz|_{\sst0}$ and $a(t)$ is a real valued smooth function defined in some connected interval $I\subseteq \R$ containing $0$ and such that $a(0)=0$. 
Notice that the solution determined in Theorem \ref{JHermFlow} can be written in this form by choosing 
\begin{equation}\label{aJRic}
a(t)  = \frac{\Fz}{c}\left(h(t) -1\right) =  \frac{\Fz}{c}\left( \frac{1}{\sqrt{1-2c\Fz^2 t}} - 1\right). 
\end{equation}

Since $\fz$ is closed and its norm $\Fz$ is constant, using \eqref{w2special} we can rewrite \eqref{solspec} as follows
\[
\f(t) = \fz + \frac{a(t)}{\Fz}\, \Delta_{g_0}\fz = \fz + \frac{a(t)}{2} \, d\Wd. 
\]
Thus, $\f(t)$ is closed and primitive with respect to $\omega$ (cfr.~Lemma \ref{dwdlemma}).  
Moreover, $\f(t)$ is positive when $t$ is sufficiently close to $0$, as $\fz$ is positive, $a(0)=0$, and being positive is an open condition.
Therefore, the pair $(\omega,\f(t))$ defines a Type IIA geometry such that $\f(0)=\fz$, for $t$ sufficiently close to $0$. 
In particular, we can consider the corresponding SU(3)-structure $(\omega,\psip_t)$. 

Let $F_t\coloneqq |\f(t)|_{g(t)}$. By definition, we have
\[
\psip_t = \frac{2}{F_t}\,\f(t) =  \frac{\Fz}{F_t}\,\psip_{\sst0} + \frac{a(t)}{F_t}\, d\Wd =  \frac{\Fz}{F_t}\left(\psip_{\sst0} + \frac{a(t)}{\Fz}\, d\Wd\right).
\]
Since $(M,\omega,\fz)$ is locally homogeneous, $F_t$ is a function of $t$ only and, consequently, $\psip_t$ is closed. Thus, the SU(3)-structure $(\omega,\psip_t)$ is SHF.  

The restrictions on $c$ and $S$ mentioned above allow us to determine the expression of $F_t$ and $\psim_t = J_t\psip_t$ with the help of the results discussed in Section \ref{LemmaSect}.  
In detail, we have the following. 
\begin{proposition}\label{lemmapsimt}
Assume that at some point $x$ of $M$ (and thus at each point of the manifold) the endomorphism $S\in\Sym^{\sst-}(T^*_xM)$ satisfies one of the following conditions:
\begin{enumerate}[a)]
\item\label{caseA} $S$ has rank $2$ and spectrum $(0,0,0,0,\mu,-\mu)$, with either $\mu = \frac14 |\Wd|_{\sst0}^2$ or $\mu = - \frac14 |\Wd|_{\sst0}^2$; 
\item\label{caseB} $S$ has rank $6$ and spectrum $(\mu,-\mu,\mu,-\mu,\mu,-\mu)$, with either $\mu=-\frac14 |\Wd|_{\sst0}^2$ or $\mu=\frac{1}{12} |\Wd|_{\sst0}^2$, 
and there exists a basis of $T_xM$ formed by eigenvectors of $S$ that is adapted to the special SHF structure $(\omega,\psip_{\sst0})$. 
\end{enumerate} 
Then, 
\begin{equation}\label{Fteq}
F_t = \Fz \left(1 + \frac{c}{\Fz}\,a(t) \right)^{\mbox{$\frac{|\Wd|^2_{\sst0}}{4c}$}}, 
\end{equation}
and
\begin{equation}\label{psimteq}
\psim_t = J_t\psip_t =  \frac{F_t}{\Fz}\left(\psim_{\sst0} - \frac{a(t)}{\Fz + c\,a(t)} *_{g_0} d\Wd \right), 
\end{equation}
where the constants $c$, $\mu$ and $|\Wd|_{\sst0}^2$ are related by equation \eqref{murank2} in the case \ref{caseA}) and by equation \eqref{murank6} in the case \ref{caseB}). 
\end{proposition}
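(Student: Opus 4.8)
The plan is to establish \eqref{Fteq} and \eqref{psimteq} by a pointwise computation carried out in a $g_{\sst0}$-orthonormal coframe $(e^1,\dots,e^6)$ that is adapted to $(\omega,\psip_{\sst0})$ and at the same time diagonalizes $S$; as recalled before Lemma \ref{eigenconstr}, such a coframe exists under either hypothesis \ref{caseA}) or \ref{caseB}). Setting $b\coloneqq a(t)/\Fz$ and using $\f(t)=\fz+\tfrac{a(t)}{2}d\Wd=\tfrac{\Fz}{2}\big(\psip_{\sst0}+b\,d\Wd\big)$ together with the decomposition $d\Wd=\tfrac{|\Wd|_{\sst0}^2}{4}\psip_{\sst0}+S_*\psip_{\sst0}$ from Lemma \ref{dwdlemma}, I would first expand $\psip_{\sst0}+b\,d\Wd$ in the chosen coframe. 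Because $S$ is diagonal with eigenvalues $(\mu_1,-\mu_1,\mu_2,-\mu_2,\mu_3,-\mu_3)$, one has $S_*e^{ijk}=-(\lambda_i+\lambda_j+\lambda_k)e^{ijk}$, so the four monomials $e^{135},e^{146},e^{236},e^{245}$ of $\psip_{\sst0}$ pick up coefficients $A_1,A_2,A_3,A_4$ that are affine in $b$; these reduce to $(P,-Q,-Q,-P)$ in case \ref{caseA}) and to $(P,-Q,-Q,-Q)$ in case \ref{caseB}), where $P,Q$ are the two affine-in-$b$ expressions built from $\mu$ and $|\Wd|_{\sst0}^2$. Since $A_1>0$ and $A_2,A_3,A_4<0$ near $t=0$, the form $\f(t)$ stays definite there, so $J_t$, $g_t$ and $F_t$ are well defined.

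The key step is to recognize $\f(t)$ as a multiple of a \emph{standard} real part $\psip$ in a suitably rescaled coframe. I would introduce $\te^{i}=c_i\,e^{i}$ with $c_1c_2=c_3c_4=c_5c_6=1$, so that $\omega=\te^{12}+\te^{34}+\te^{56}$ is unchanged, and fix the $c_i$ so that the four rescaled coefficients coincide; the resulting coframe is then $g_t$-orthonormal and special unitary for the SU(3)-structure $(\omega,\psip_t)$, which at once identifies $J_t$, the metric $g_t$, and the normalization. Reading off the overall scale gives the clean formula $F_t=\Fz\,|A_1A_2A_3A_4|^{1/4}$, i.e. $F_t=\Fz\sqrt{PQ}$ in case \ref{caseA}) and $F_t=\Fz\,P^{1/4}Q^{3/4}$ in case \ref{caseB}). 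Imposing the eigenvalue relations \eqref{murank2} and \eqref{murank6} is exactly what collapses these to the single power law \eqref{Fteq}: in case \ref{caseA}) they force $PQ=1+\tfrac{c}{\Fz}a$ together with $\tfrac{|\Wd|_{\sst0}^2}{4c}=\tfrac12$, while in case \ref{caseB}) they force one of $P,Q$ to equal $1$ and the other to equal $1+\tfrac{c}{\Fz}a$, with the surviving exponent ($\tfrac14$ or $\tfrac34$) matching $\tfrac{|\Wd|_{\sst0}^2}{4c}$.

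For \eqref{psimteq} I would use that in the special unitary rescaled coframe $\psim_t=*_{g_t}\psip_t$ is just the standard $\te^{136}+\te^{145}+\te^{235}-\te^{246}$, and then re-express it in the original coframe through the $c_i$. To compare with the right-hand side of \eqref{psimteq}, I would compute $*_{g_0}d\Wd=\tfrac{|\Wd|_{\sst0}^2}{4}\psim_{\sst0}+\Jz\big(S_*\psip_{\sst0}\big)$, using $*_{g_0}\rho=\Jz\rho$ for $\rho\in\Omega^3_{12}(M)$ and evaluating $\Jz(S_*\psip_{\sst0})$ monomial-by-monomial via the induced action $e^i\mapsto e^i\circ\Jz$ on covectors. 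Matching the coefficients of $e^{136},e^{145},e^{235},e^{246}$ on the two sides then reduces to elementary identities in $P$, $Q$ and $b$, and the cancellations that make them hold are precisely those produced by the eigenvalue constraints (for instance, in case \ref{caseB}) either $\tfrac{|\Wd|_{\sst0}^2}{4}+\mu$ or $-\tfrac{|\Wd|_{\sst0}^2}{4}+3\mu$ vanishes, according to the sign of $\mu$). Since $(M,\omega,\fz)$ is locally homogeneous the eigenvalues of $S$ are constant on $M$, so $F_t$ depends on $t$ alone and the pointwise identities just verified hold as global tensor identities.

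The main obstacle is organizational rather than conceptual: one must carry the two admissible signs of $\mu$ in each of the cases \ref{caseA}) and \ref{caseB}) through the rescaling and the coefficient-matching for $\psim_t$ without slips, and confirm that the same closed forms \eqref{Fteq}--\eqref{psimteq} emerge in all four sub-cases. A secondary point needing care is the explicit factorization of $P$ and $Q$ forced by \eqref{murank2} and \eqref{murank6}: it is this factorization---rather than any generic identity---that turns the a priori quartic expression $|A_1A_2A_3A_4|^{1/4}$ into the stated power of $1+\tfrac{c}{\Fz}a$, and likewise produces the single coefficient $\tfrac{a}{\Fz+ca}$ in \eqref{psimteq}.
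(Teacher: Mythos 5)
Your proposal is correct in substance and follows the same overall strategy as the paper --- a pointwise computation in a unitary eigenbasis of $S$, starting from the expansion $\f(t)=\tfrac{\Fz}{2}\bigl(\psip_{\sst0}+b\,d\Wd\bigr)$ with $d\Wd=\tfrac{|\Wd|^2_{\sst0}}{4}\psip_{\sst0}+S_*\psip_{\sst0}$ --- but the mechanism you use to extract the induced structure is genuinely different. The paper computes $F_t$ from Hitchin's quartic invariant via the identity $\sqrt{-P(\f)}=\tfrac12|\f|^2$, then computes $J_t$ explicitly from the endomorphism $S_{\f}$ and finally evaluates $\psim_t=J_t\psip_t$ term by term. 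You instead absorb the coefficients $A_1,\dots,A_4$ into a diagonal rescaling $\te^i=c_ie^i$ with $c_1c_2=c_3c_4=c_5c_6=1$, so that $\omega$ and $\psip_t$ are \emph{simultaneously} standard in the new coframe; since the standard pair determines the standard $J$ and $g$ (and $\prod_i c_i=1$ preserves the orientation), this coframe is automatically adapted to $(\omega,\psip_t)$, and $F_t$, $g_t$, $J_t$ and $\psim_t=*_{g_t}\psip_t$ are all read off at once. This is cleaner --- it avoids both the quartic polynomial and the explicit $J_t$ --- and your identifications $F_t=\Fz\sqrt{PQ}$ (case a)) and $F_t=\Fz\,P^{1/4}Q^{3/4}$ (case b)), together with the collapse to $\bigl(1+\tfrac{c}{\Fz}a\bigr)^{|\Wd|^2_{\sst0}/4c}$ forced by \eqref{murank2}--\eqref{murank6}, agree exactly with the paper's formulas.

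One point needs an extra line of justification. In case a) the proposition does \emph{not} hypothesize an adapted eigenbasis (unlike case b), where it is assumed precisely because it is not automatic), and the remark preceding Lemma \ref{eigenconstr} only records this property for the known examples; the paper accordingly works with a merely unitary eigenbasis and carries the phase $\theta$ of \eqref{u3frame} through the computation. Your assumption is nonetheless legitimate: since $S$ has rank $2$, its kernel is a $J_{\sst0}$-invariant $4$-plane on which any unitary basis is an eigenbasis, and the $\U(2)$-freedom there lets you rotate the determinant so that $\theta=0$. Either insert this observation or, as in the paper, keep $\theta$ general (your rescaling argument goes through unchanged, since a constant phase rotation of $\Psi$ alters neither $J_t$ nor $g_t$). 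With that addendum the proof is complete.
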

\begin{proof}
The identities can be shown via a pointwise computation with respect to a suitable basis of the tangent space. 

In the case \ref{caseA}), we have $c=\frac12|\Wd|^2_{\sst0} = \pm2\mu$, 
and we consider a basis $\mathcal{B}=(e_1,\ldots,e_6)$ of $T_xM$ formed by eigenvectors of $S$ that is unitary with respect to the almost Hermitian structure $(g_{\sst0},\Jz)$.  
It is not restrictive to assume that $Se_1=\mu e_1$, $Se_2 =-\mu e_2$ and $Se_k=0$ otherwise. 
With respect to the dual basis $\mathcal{B}^*=(e^1,\ldots,e^6)$ of $\mathcal{B}$, the 3-form $\psip_{\sst0}$ can be written as in \eqref{u3frame} for some $\theta\in[0,2\pi)$. 
We can then determine the expression of $d\Wd = \frac14  |\Wd|_{\sst0}^2\psip_{\sst0} + S_*\psip_{\sst0}$ with respect to $\mathcal{B}^*$ in the same fashion as in the proof of Lemma \ref{eigenconstr}.  

Since $\f(t) = \fz + \frac{a(t)}{2} \, d\Wd = \frac{\Fz}{2}\,\psip_{\sst0} + \frac{a(t)}{2} \, d\Wd$, at $x$ we have
\[
\begin{array}{rcl}
\f(t) 	&=&  \frac{\cos(\theta)}{2}\left(  \left(\Fz + a(t)\left(\frac{|\Wd|^2_{\sst0}}{4}-\mu \right)  \right) \left( e^{135}-e^{146}\right) -  \left(\Fz + a(t)\left(\frac{|\Wd|^2_{\sst0}}{4}+\mu \right)  \right) \left( e^{236}+e^{245}\right) \right) 
\vspace{0.1cm} \\ 
& &  +\frac{\sin(\theta)}{2}\left(  \left(\Fz + a(t)\left(\frac{|\Wd|^2_{\sst0}}{4}-\mu \right)  \right)  \left( e^{136}+e^{145}\right)+ \left(\Fz + a(t)\left(\frac{|\Wd|^2_{\sst0}}{4}+\frac{c}{2} \right)  \right)   \left( e^{235} - e^{246}\right)  \right). 
\end{array}
\] 
Using the general identity from \cite[Lemma 4]{FPPZ}
\begin{equation}\label{lemma4FPPZ}
\sqrt{-P(\f)} = \frac12 |\f|^2, 
\end{equation}
and the definition of the polynomial $P(\f)$, we see that
\[
F_t = \sqrt{  \left(\Fz + a(t)\left(\frac{|\Wd|^2_{\sst0}}{4}-\mu \right)  \right)   \left(\Fz + a(t)\left(\frac{|\Wd|^2_{\sst0}}{4}+\mu \right)  \right) }\,, 
\]
and thus \eqref{Fteq} is verified when $c=\frac12|\Wd|^2_{\sst0} = \pm2\mu$.   
We then compute the almost complex structure $J_t$ induced by $(\omega,\psip_t)$ using the definition recalled in Section \ref{preliminaries}, obtaining  
\[
J_t e_1 = \frac{\Fz + a(t)\left(\frac{|\Wd|^2_{\sst0}}{4}-\mu \right) }{\Fz + a(t)\left(\frac{|\Wd|^2_{\sst0}}{4}+\mu \right) } e_2, \quad 
J_te_2 = - \frac{\Fz + a(t)\left(\frac{|\Wd|^2_{\sst0}}{4}+\mu \right) }{\Fz + a(t)\left(\frac{|\Wd|^2_{\sst0}}{4}-\mu \right)} e_1,  
\] 
and $J_t e_{2k-1} = e_{2k}$, for $k=2,3$. 
It is now possible to compute the expression of $\psim_t = J_t\psip_t$ with respect to $\mathcal{B}^*$ and conclude that \eqref{psimteq} holds when $c=\frac12|\Wd|^2_{\sst0} = \pm2\mu$. 

In the case \ref{caseB}), we have either $c = |\Wd|^2_{\sst0} = -4\mu$ or $c = \frac13|\Wd|^2_{\sst0} = 4\mu$. 
Moreover, by hypothesis there exists a basis $\mathcal{B}=(e_1,\ldots,e_6)$ of $T_xM$ formed by eigenvectors of $S$ that is adapted to $(\omega,\psip_{\sst0})$. 
Thus, we have 
\[
Se_{2k-1}=\mu e_{2k-1},\quad Se_{2k} = -\mu e_{2k}, \quad k=1,2,3, 
\]
and we can write $\psip$ and $\psim$ as in \eqref{adaptedfr} with respect to the dual basis of $\mathcal{B}$. We then compute
\[
F_t = \sqrt[4]{  \left(\Fz + a(t)\left(\frac{|\Wd|^2_{\sst0}}{4}+\mu \right)  \right)^3   \left(\Fz + a(t)\left(\frac{|\Wd|^2_{\sst0}}{4}-3\mu \right)  \right) }\,,
\]
from which we see that \eqref{Fteq} holds. 
We can then proceed in a similar way as in the case \ref{caseA}) computing the expression of $J_t$ and  then checking \eqref{psimteq}, so we omit the detail. 
\end{proof}

Notice that the identities \eqref{Fteq} and \eqref{psimteq} also hold for the solution to the Type IIA flow considered in Theorem \ref{JHermFlow}, 
as in that case $a(t)$ is given by \eqref{aJRic} and $c=\frac14|\Wd|^2_{\sst0}$. 
Thus, the same conclusions of Proposition \ref{lemmapsimt} hold when $S=0$. 

\begin{remark}
The equations \eqref{murank2} and \eqref{murank6} are a consequence of Lemma \ref{eigenconstr}, and thus of the identity $|d\Wd|^2_{\sst0} = c\, |\Wd|_{\sst0}^2$ of Definition \ref{specialSHF}.
When $\psim_t$ is given by \eqref{psimteq}, the normalization condition for the SU(3)-structure $(\omega,\psip_t)$ follows from this identity.    
Indeed, using it together with the general identities \eqref{iddw2}, we have
\[
\begin{split}
\psip_t \W \psim_t 	&= \psip_{\sst0} \W \psim_{\sst0}  - \frac{a(t)}{\Fz + c\,a(t)}\, \psip_{\sst0} \W *_{g_0} d\Wd  + \frac{a(t)}{\Fz}\, d\Wd \W \psim_{\sst0}  - \frac{(a(t))^2}{\Fz(\Fz + c\,a(t))} |d\Wd|_{\sst0}^2\vol_g \\
				&= \psip_{\sst0} \W \psim_{\sst0} +\left( - \frac{a(t)}{\Fz + c\,a(t)}  + \frac{a(t)}{\Fz} - \frac{(a(t))^2}{\Fz(\Fz + c\,a(t))} \,c \right) |\Wd|_{\sst0}^2 \vol_g =  \frac23\, \omega^3. 
\end{split}
\]
 \end{remark}

\smallskip

In the next theorem, we show that the Type IIA flow is equivalent to an initial value problem for the function $a(t)$ under the Ansatz \eqref{solspec} whenever $\psim_t$ is given by \eqref{psimteq}. 
This will allow us to solve the flow starting at any of the known examples of (locally) homogeneous special Type IIA geometries. 
As we will see in the proof, the expression of $F_t$ given in Proposition \ref{lemmapsimt} also follows from the flow equation.

\begin{theorem}\label{SolThm}
Let $(M,\omega,\fz)$ be a locally homogeneous $6$-manifold with an invariant special Type IIA geometry. Consider the $3$-form 
\[
\f(t) = \fz + \frac{a(t)}{\Fz}\, \Delta_{g_0}\fz, 
\]
where $a(t)$ is a real valued function defined in some neighborhood of $0\in\R$ and such that $a(0)=0$, 
and let $(\omega,\psip_t)$ be the SHF structure corresponding to the Type IIA geometry $(\omega,\f(t))$. 
Assume that 
\[
\psim_t = \frac{F_t}{\Fz}\left(\psim_{\sst0} - \frac{a(t)}{\Fz + c\,a(t)} *_{g_0} d\Wd \right).
\]
Then, $\f(t)$ solves the source-free Type IIA flow starting at $\fz$ 
if and only if $a(t)$ solves the initial value problem 
\begin{equation}\label{sysspec}
\left\{
\begin{split}
\frac{d}{dt} a(t) &= \Fz^3\left(1+\frac{c}{\Fz} a(t) \right)^{\mbox{ $\frac{|\Wd|_{\sst0}^2}{c}-1$ }}, \\
a(0) &= 0.
\end{split} \right.
\end{equation}
Moreover, this is the unique solution to the flow starting at $\fz$ if $M$ is compact. 
\end{theorem}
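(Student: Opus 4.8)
The plan is to substitute the Ansatz \eqref{solspec} into the flow equation \eqref{IIAflow} and reduce both sides to multiples of the single fixed $3$-form $d\Wd$, so that the PDE collapses to a scalar ODE for $a(t)$. First I would compute the left-hand side: since $\fz$ is closed with constant norm, \eqref{w2special} gives $\Delta_{g_0}\fz = dd^{*_0}\fz = \tfrac{\Fz}{2}\,d\Wd$, so the Ansatz reads $\f(t) = \fz + \tfrac{a(t)}{2}\,d\Wd$ and hence $\frac{\partial}{\partial t}\f(t) = \tfrac{\dot a(t)}{2}\,d\Wd$. For the right-hand side I would use that $\f(t) = \tfrac{F_t}{2}\,\psip_t$ and $|\f(t)|_{g(t)}^2 = F_t^2$, so $|\f(t)|^2_{g(t)}\f(t) = \tfrac{F_t^3}{2}\,\psip_t$. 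Local homogeneity makes $F_t$ constant in space, so $d^{*_t}$ passes through the scalar and produces the torsion form of $(\omega,\psip_t)$, namely $d^{*_t}\psip_t\in\Omega^2_8(M)$; being $J_t$-invariant it is fixed by $J_t$, whence the right-hand side equals $\tfrac{F_t^3}{2}\,d\,(d^{*_t}\psip_t)$.

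The crux is to evaluate the time-$t$ torsion form $d^{*_t}\psip_t$. I would exploit the SHF characterization $d\psim_t = (d^{*_t}\psip_t)\wedge\omega$ and differentiate the assumed expression \eqref{psimteq} for $\psim_t$. Here $d\psim_0 = \Wd\wedge\omega$, while the remaining term requires $d*_{g_0}d\Wd$; this is where the special condition enters decisively. Combining the coclosedness $d^{*_0}\Wd=0$ from Lemma \ref{dwdlemma} with $\Delta_{g_0}\Wd = c\,\Wd$ gives $d^{*_0}d\Wd = c\,\Wd$, and applying $*_{g_0}$ (using $*_{g_0}\Wd = -\Wd\wedge\omega$ on $\Omega^2_8(M)$) yields $d*_{g_0}d\Wd = c\,\Wd\wedge\omega$. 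After collecting terms and simplifying $1 - \tfrac{c\,a}{\Fz+c\,a} = \tfrac{\Fz}{\Fz+c\,a}$, I would obtain $(d^{*_t}\psip_t)\wedge\omega = \tfrac{F_t}{\Fz+c\,a}\,\Wd\wedge\omega$, and by the Lefschetz isomorphism $\wedge\,\omega:\Lambda^2\to\Lambda^4$ conclude $d^{*_t}\psip_t = \tfrac{F_t}{\Fz+c\,a}\,\Wd$. I expect this torsion-form computation to be the main obstacle, both because it is the point where the defining properties of a special structure must cooperate and because one must keep track that $F_t$ and $\psim_t$ genuinely have the forms supplied by Proposition \ref{lemmapsimt}.

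It then remains to assemble the two sides and to settle uniqueness. The right-hand side becomes $\tfrac{F_t^4}{2(\Fz+c\,a)}\,d\Wd$; equating with the left-hand side and cancelling the nonzero form $d\Wd$ gives $\dot a = \tfrac{F_t^4}{\Fz+c\,a}$, and substituting $F_t$ from \eqref{Fteq} together with $\Fz+c\,a = \Fz(1+\tfrac{c}{\Fz}a)$ produces exactly the differential equation in \eqref{sysspec}. Since $a(0)=0$ is built into the Ansatz and every step above is reversible, this establishes the stated equivalence in both directions.

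Finally, the initial value problem \eqref{sysspec} has a unique local solution by Picard--Lindel\"of, its right-hand side being smooth in $a$ near $a=0$ (where $1+\tfrac{c}{\Fz}a=1>0$). On a compact manifold the source-free Type IIA flow is well-posed with unique solutions by \cite{FPPZ}; as the Ansatz built from this unique $a(t)$ furnishes a solution starting at $\fz$, it must coincide with the unique flow solution, which yields the uniqueness assertion.
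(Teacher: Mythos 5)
Your reduction of the flow equation to the scalar ODE $\dot a = F_t^4/(\Fz + c\,a)$ matches the paper's argument: you compute $d*_{g_0}d\Wd = c\,\Wd\wedge\omega$ from $\Delta_{g_0}\Wd = c\,\Wd$ and the coclosedness of $\Wd$, differentiate the assumed $\psim_t$ to get $d\psim_t = \frac{F_t}{\Fz+c\,a}\,\Wd\wedge\omega$, and identify the new torsion form as $\Wd(t)=\frac{F_t}{\Fz+c\,a}\,\Wd$. Your identification via injectivity of $\wedge\,\omega$ on $2$-forms is a legitimate shortcut past the paper's check (which instead verifies, using $d\Wd\wedge\Wd=0$, that $\Wd$ lies in $\Omega^2_8(M)$ for the time-$t$ structure and then invokes uniqueness of the torsion form); both give the same conclusion, and the $J_t$-invariance needed to pass $J_t$ through on the right-hand side is indeed automatic for the torsion form of an SHF structure.

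There is, however, a genuine gap at the last step: you pass from $\dot a = F_t^4/(\Fz+c\,a)$ to \eqref{sysspec} by ``substituting $F_t$ from \eqref{Fteq}'', but \eqref{Fteq} is not among the hypotheses of the theorem and Proposition \ref{lemmapsimt} only establishes it under the additional spectral assumptions a) or b) on $S$, which the theorem does not assume. The quantity $F_t=|\f(t)|_{g(t)}$ is an algebraic function of $a(t)$ (a fourth root of a quartic in $a$, via $F_t^2=2\sqrt{-P(\f(t))}$), and there is no a priori reason it collapses to $\Fz\bigl(1+\tfrac{c}{\Fz}a\bigr)^{|\Wd|^2_{\sst0}/(4c)}$ in the stated generality. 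The paper closes exactly this hole inside the proof: it invokes the evolution equation for $u=\log F_t^2$ from \cite[(7.28)]{FPPZ}, which under local homogeneity reduces to $\dot u = e^{u}\,|N_{J_t}|^2$, uses $|N_{J_t}|^2=\tfrac12|\Wd(t)|^2$ together with the already-computed $\Wd(t)$ to obtain $\frac{d}{dt}F_t = \tfrac14\,\frac{F_t^5}{(\Fz+c\,a)^2}\,|\Wd|^2_{\sst0}$, and combines this with $\dot a = F_t^4/(\Fz+c\,a)$ and $a(0)=0$ to \emph{derive} \eqref{Fteq}. Without this (or an equivalent pointwise verification as in Proposition \ref{lemmapsimt}), the exponent $\frac{|\Wd|_{\sst0}^2}{c}-1$ in \eqref{sysspec} is unjustified, and the ``reversibility'' you invoke for the converse implication also rests on \eqref{Fteq} holding as an algebraic identity in $a$. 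The remainder of your argument (Picard--Lindel\"of for local existence of $a$, and uniqueness on compact $M$ via the well-posedness theorem of \cite{FPPZ}) agrees with the paper.
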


\begin{proof}
We begin computing the expression of the torsion form $\Wd(t)$ of the SHF structure $(\omega,\psip_t)$. 
Since $\Delta_{g_0}\Wd = c\,\Wd$, we have $ d *_{g_0} d \Wd = -c\,*_{g_0}\Wd = c\,\Wd\W\omega$. Therefore
\[
d\psim_t	= \frac{F_t}{\Fz}\left(d\psim_{\sst0} - \frac{a(t)}{\Fz + c\,a(t)} d*_{g_0} d\Wd \right) = \frac{F_t}{\Fz + c\,a(t)} \, \Wd\W\omega. 
\]
The assumption $d\Wd\W\Wd =0$ ensures that $\Wd\in\Omega^2_8(M)$, where this space is now determined by the SU(3)-structure $(\omega,\psip_t)$. 
Indeed, $\Wd\W\omega^2=0$ and $\Wd\W\psip_t = 0$, whence it follows that $J_t\Wd =\Wd$.  
By the uniqueness of the torsion form $\Wd(t)$, we conclude that 
\[
\Wd(t) = \frac{F_t}{\Fz + c\,a(t)} \, \Wd. 
\]

Now, since $F_t$ depends only on $t$, and $d^{*_t}\f(t) = \frac{F_t}{2}\,\Wd(t)$, we see that the RHS of the flow equation in \eqref{IIAflow} becomes 
\[
dJ_td^{*_t}\left(F_t^2\f(t) \right) = \frac{F_t^3}{2}\,d\Wd(t) = \frac12 \frac{F_t^4}{\Fz + c\,a(t)} \, d\Wd,
\]
as $\Wd(t)$ is $J_t$-invariant. On the other hand, we have
\[
\ddt \f(t) = \frac12\,\frac{d}{dt}a(t)\,d\Wd. 
\]
Therefore, the 3-form $\f(t)$ given by \eqref{solspec} solves the  source-free Type IIA flow equation if and only if $a(t)$ solves the ODE
\begin{equation}\label{atfirst}
\frac{d}{dt}a(t) =  \frac{F_t^4}{\Fz + c\,a(t)}.
\end{equation}

We now show that the expression of $F_t$ given in Proposition \ref{lemmapsimt} can also be obtained from the flow equation. 
We know from \cite[(7.28)]{FPPZ} that the function $u(t) = \log F_t^2$ evolves as follows along the  source-free Type IIA flow
\[
\ddt u(t) = e^{u(t)} \left(\Delta_{g(t)} u(t) +2\,|d u(t)|_{g(t)}^2 + |N_{J_t}|^2_{g(t)}\right).
\]
Under our assumptions, $u(t)$ is a function of $t$ only, and this equation becomes 
\begin{equation}\label{flowut}
\ddt u(t) = e^{u(t)}  |N_{J_t}|^2_{g(t)}. 
\end{equation}
From \eqref{NJSHF}, we know that $|N_{J_t}|^2_{g(t)}  =\frac12\, |\Wd(t)|_{g(t)}^2$. 
Since $\vol_{g(t)} = \frac16\, \omega^3 = \vol_{g_0}$ and $\Wd(t) \in\Omega^2_8(M)$, we can compute the norm of $\Wd(t)$ as follows 
\[
\begin{split}
|\Wd(t)|_{g(t)}^2 \vol_{g(t)}	&= \Wd(t) \W *_{g(t)} \Wd(t) = - \Wd(t) \W \Wd(t) \W \omega \\
					&= -\left( \frac{F_t}{\Fz + c\,a(t)}\right)^2 \, \Wd \W \Wd\W\omega = \left( \frac{F_t}{\Fz + c\,a(t)}\right)^2 |\Wd|^2_{\sst0} \vol_{g_0}. 
\end{split}
\] 
Substituting this last expression into the equation \eqref{flowut}, we then obtain
\begin{equation}\label{Ftflow}
\frac{d}{dt} F_t = \frac14\,\frac{F_t^5}{(\Fz+c\, a(t))^2}\,|\Wd|^2_{\sst0}. 
\end{equation}
Combining the ODEs \eqref{atfirst} and \eqref{Ftflow} and using that $a(0)=0$, we get   
\[
F_t = \Fz \left(1 + \frac{c}{\Fz}\,a(t) \right)^{\mbox{$\frac{|\Wd|^2_{\sst0}}{4c}$}}, 
\]
that is precisely \eqref{Fteq}. Consequently, we have 
\[
\frac{d}{dt} a(t) = \frac{F_t^4}{\Fz + c\,a(t)} = \Fz^3\left(1+\frac{c}{\Fz} a(t) \right)^{\mbox{ $\frac{|\Wd|_{\sst0}^2}{c}-1$ }}. 
\]
Finally, the uniqueness of the solution when $M$ is compact follows from \cite[Thm.\td2]{FPPZ}. 
\end{proof}

\smallskip

\begin{remark}
In the statement of Theorem \ref{SolThm} it is possible to replace the hypothesis on $(M,\omega,\fz)$ with one of the following: 
\begin{enumerate}[1)]
\item $(M,\omega,\fz)$ is compact locally homogeneous and the SHF structure $(\omega,\psip_{\sst0})$ satisfies the conditions \ref{ispec}) and \ref{iispec}) of Definition \ref{specialSHF}; 
\item $M=\G$ is a unimodular Lie group with a left-invariant Type IIA geometry $(\omega,\fz)$ 
and the SHF structure $(\omega,\psip_{\sst0})$ satisfies the conditions \ref{ispec}) and \ref{iispec}) of Definition \ref{specialSHF}.
\end{enumerate}
Indeed, Proposition \ref{specialrelations} ensures that the identity $|d\Wd|^2_{\sst0} = c\, |\Wd|_{\sst0}^2$ holds, and the thesis follows from the same argument used in the proof of Theorem \ref{SolThm}. 
\end{remark}

From the proof of Theorem \ref{SolThm}, we see that the norm of the Nijenhuis tensor of the almost complex structure $J_t$ is 
\[
|N_{J_t}|_{g(t)}^2 =\frac12 |\Wd(t)|^2_{g(t)} = \frac{|\Wd|_{\sst0}^2}{2} \left(1+\frac{c}{\Fz}\, a(t) \right)^{\mbox{$\frac{|\Wd|^2_{\sst0}}{2c}$}-2}. 
\]

\smallskip

It is clear that the expression of the solution to the initial value problem \eqref{sysspec} depends on the relation between the constants $c$ and $|\Wd|_{\sst0}^2$. 
In fact, there are three relevant cases giving rise to different types of solutions. 
We describe each possible case separately in the following corollaries, whose proofs follow from direct computations using the results obtained so far. 
As we will see, all possibilities occur in the known examples. 

\begin{corollary}\label{CorEternal}
If $c = \frac12 |\Wd|_{\sst0}^2$, the solution to \eqref{sysspec} is given by 
\[
a(t) = \frac{\Fz}{c} \left( e^{{c\Fz^2t}} - 1 \right).  
\] 
Consequently, the norm of the solution \eqref{solspec} to the Type IIA flow is
\[
|\f(t)|_{g(t)}  = \Fz\, e^{\frac12 c\Fz^2t}, 	 
\] 
and $\f(t)$ exists for all real times, i.e., it is eternal. Moreover, the norm of the Nijenhuis tensor of $J_t$ is
 \[
|N_{J_t}|_{g(t)}^2 = \frac{|\Wd|_{\sst0}^2}{2}\, e^{-c\Fz^2t}. 
\]
\end{corollary}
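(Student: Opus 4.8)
The plan is to specialize the initial value problem \eqref{sysspec} from Theorem \ref{SolThm} to the relation $c = \frac12|\Wd|_{\sst0}^2$. This is the one simplification that makes everything explicit: under this hypothesis the exponent $\frac{|\Wd|_{\sst0}^2}{c}-1$ appearing in \eqref{sysspec} collapses to $1$, so the governing ODE for $a(t)$ degenerates from a genuinely nonlinear power law into the linear equation
\[
\frac{d}{dt}a(t) = \Fz^3\left(1 + \frac{c}{\Fz}\,a(t)\right), \qquad a(0) = 0.
\]
First I would introduce the auxiliary function $b(t) \coloneqq 1 + \frac{c}{\Fz}\,a(t)$, which satisfies $b(0) = 1$ and, by the chain rule together with the ODE above, $\frac{d}{dt}b(t) = \frac{c}{\Fz}\,\Fz^3\,b(t) = c\Fz^2\,b(t)$. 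Separating variables gives $b(t) = e^{c\Fz^2 t}$, and unwinding the substitution yields exactly $a(t) = \frac{\Fz}{c}\bigl(e^{c\Fz^2 t}-1\bigr)$, as claimed.

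Next I would feed this back into the closed-form expressions already produced in Theorem \ref{SolThm} and Proposition \ref{lemmapsimt}. Since $\frac{|\Wd|_{\sst0}^2}{4c} = \frac12$ in the present case, the identity \eqref{Fteq} together with $1+\frac{c}{\Fz}a(t) = e^{c\Fz^2 t}$ gives at once $|\f(t)|_{g(t)} = F_t = \Fz\,e^{\frac12 c\Fz^2 t}$. For the maximal interval of existence I would observe that $b(t) = e^{c\Fz^2 t}$ is strictly positive for every $t \in \R$, so the factor $1+\frac{c}{\Fz}a(t)$ never vanishes; hence $\f(t)$ remains positive and defines a Type IIA geometry for all real times, establishing that the solution is eternal. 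Finally, substituting $b(t) = e^{c\Fz^2 t}$ into the formula for $|N_{J_t}|_{g(t)}^2$ recorded immediately after Theorem \ref{SolThm}, and noting that here the exponent $\frac{|\Wd|_{\sst0}^2}{2c}-2$ equals $-1$, yields $|N_{J_t}|_{g(t)}^2 = \frac{|\Wd|_{\sst0}^2}{2}\,e^{-c\Fz^2 t}$.

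I expect no genuinely hard step: the whole statement reduces to solving a single linear first-order ODE and evaluating formulas derived earlier, with the arithmetic on the exponents being the only thing to track carefully. The one point deserving a word of justification is the claim of eternal existence, namely that the maximal time interval is all of $\R$ rather than being truncated at a time where $1+\frac{c}{\Fz}a(t)$ might reach zero (which would destroy positivity of $\f(t)$); but since the exponential $e^{c\Fz^2 t}$ is everywhere strictly positive, this obstruction simply does not arise, and eternality is automatic.
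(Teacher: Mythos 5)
Your proposal is correct and follows exactly the route the paper intends: the paper states that the corollary "follows from direct computations using the results obtained so far," namely specializing the exponent in \eqref{sysspec} to $1$, solving the resulting linear ODE, and substituting into \eqref{Fteq} and the Nijenhuis-norm formula, with eternality justified (as in the remark after Corollary \ref{CorImmortal}) by the non-vanishing of $|\f(t)|_{g(t)}$. All the arithmetic on the exponents checks out.
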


\begin{corollary}\label{CorAncient}
If $\frac14 |\Wd|_{\sst0}^2  \leq c < \frac12 |\Wd|_{\sst0}^2$, the solution to \eqref{sysspec} is given by 
\[
a(t) = \frac{\Fz}{c}\left[\left(2c-|\Wd|_{\sst0}^2 \right)\Fz^2 t +1 \right]^{ \mbox{ $\frac{ c }{2c-|\Wd|_{\sst0}^2}$ } } -  \frac{\Fz}{c}.
\] 
Consequently, the norm of the solution \eqref{solspec} to the Type IIA flow is
\[
|\f(t)|_{g(t)}  = \Fz \left[\left(2c-|\Wd|_{\sst0}^2 \right)\Fz^2 t +1 \right]^{ \mbox{ $\frac{ |\Wd|_{\sst0}^2 }{4\left(2c-|\Wd|_{\sst0}^2\right)}$ } },
\]
and $\f(t)$ exists for $t\in \left( -\infty, \frac{1}{\left(|\Wd|_{\sst0}^2 -2c\right)\Fz^2 }\right)$, i.e., it is ancient. Moreover, the norm of the Nijenhuis tensor of $J_t$ is
\[
|N_{J_t}|_{g(t)}^2 = \frac{|\Wd|_{\sst0}^2}{2} \left[\left(2c-|\Wd|_{\sst0}^2 \right)\Fz^2 t +1 \right]^{ \mbox{ $\frac{ |\Wd|_{\sst0}^2 -4c }{2\left(2c-|\Wd|_{\sst0}^2\right)}$ } }.
\]
\end{corollary}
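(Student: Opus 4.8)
The plan is to integrate the initial value problem \eqref{sysspec} explicitly, since it is an autonomous first-order separable ODE, and then to read off the remaining quantities from the formulas already established earlier in this section. First I would introduce the auxiliary function $y(t) \coloneqq 1 + \frac{c}{\Fz}\,a(t)$, which satisfies $y(0)=1$ and, by \eqref{sysspec}, the equation $\frac{d}{dt}y = c\,\Fz^2\,y^{|\Wd|_{\sst0}^2/c-1}$. Under the hypothesis $c<\frac12|\Wd|_{\sst0}^2$ the exponent $k \coloneqq \frac{|\Wd|_{\sst0}^2}{c}-1$ satisfies $k>1$, so in particular $k\neq 1$ and separation of variables applies without meeting the logarithmic exceptional case.

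Separating variables gives $y^{1-k} = (1-k)\,c\,\Fz^2\,t + 1$, where I have used $y(0)=1$ to fix the constant. Since $1-k = \frac{2c-|\Wd|_{\sst0}^2}{c}$, this reads $y^{(2c-|\Wd|_{\sst0}^2)/c} = (2c-|\Wd|_{\sst0}^2)\Fz^2 t + 1$, and raising both sides to the reciprocal power $\frac{c}{2c-|\Wd|_{\sst0}^2}$ yields the stated expression for $y$, hence for $a(t)=\frac{\Fz}{c}(y-1)$. The norm $|\f(t)|_{g(t)}=F_t$ then follows by substituting this $y$ into \eqref{Fteq}, namely $F_t=\Fz\,y^{|\Wd|_{\sst0}^2/(4c)}$, and multiplying exponents; likewise the formula for $|N_{J_t}|_{g(t)}^2$ follows by inserting $y$ into the Nijenhuis identity $|N_{J_t}|^2=\frac{|\Wd|_{\sst0}^2}{2}\,y^{|\Wd|_{\sst0}^2/(2c)-2}$ recorded after Theorem \ref{SolThm}. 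Each of these is a single exponent multiplication, so I expect no difficulty here.

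The step requiring genuine care is the determination of the maximal interval of existence and the verification that $\f(t)$ remains a Type IIA geometry throughout. Since $c\geq\frac14|\Wd|_{\sst0}^2>0$ and $2c-|\Wd|_{\sst0}^2<0$, I would write $\alpha\coloneqq|\Wd|_{\sst0}^2-2c>0$, so that the base equals $1-\alpha\Fz^2 t$ and the exponent $\frac{c}{2c-|\Wd|_{\sst0}^2}=-\frac{c}{\alpha}$ is negative. As $t\to T^-$ with $T=\frac{1}{\alpha\Fz^2}$ the base tends to $0^+$ and, the exponent being negative, $y\to+\infty$, so $a(t)$ blows up and $T$ is the forward blow-up time. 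In the backward direction the base tends to $+\infty$ as $t\to-\infty$ and $y\to 0^+$; since $k>1$, the equation $\frac{d}{dt}y=c\Fz^2 y^k$ does not reach $y=0$ in finite backward time, so the solution extends to all $t\in(-\infty,T)$, which is precisely the ancient property. Finally, $y(t)>0$ on this whole interval guarantees $1+\frac{c}{\Fz}a(t)>0$, so that $F_t$ is well-defined and positive and $\f(t)$ stays positive; together with closedness and primitivity (already noted in the derivation of \eqref{solspec}) this confirms that $(\omega,\f(t))$ is a Type IIA geometry for every $t\in(-\infty,T)$. As a consistency check, the boundary value $c=\frac14|\Wd|_{\sst0}^2$ reduces all three formulas to the self-similar solution of Theorem \ref{JHermFlow}.
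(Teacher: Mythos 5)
Your proposal is correct and follows exactly the route the paper intends: the paper omits the proof of Corollary \ref{CorAncient}, stating only that it "follows from direct computations using the results obtained so far," and your explicit integration of the separable ODE \eqref{sysspec} via $y=1+\frac{c}{\Fz}a(t)$, the substitution into \eqref{Fteq} and the Nijenhuis formula, and the identification of the maximal interval as the set where the base $\left(2c-|\Wd|_{\sst0}^2\right)\Fz^2 t+1$ stays positive are precisely those computations. The consistency check against Theorem \ref{JHermFlow} at $c=\tfrac14|\Wd|_{\sst0}^2$ matches the remark the paper makes after the corollary.
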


\begin{corollary}\label{CorImmortal}
If $c > \frac12 |\Wd|_{\sst0}^2$, the expressions of $a(t)$, $|\f(t)|_{g(t)}$ and $|N_{J_t}|_{g(t)}^2$ are those given in Corollary \ref{CorAncient}. 
In this case, the solution $\f(t)$ exists for $t\in \left( \frac{1}{\left(|\Wd|_{\sst0}^2 -2c\right)\Fz^2 } , +\infty\right)$, i.e., it is immortal. 
\end{corollary}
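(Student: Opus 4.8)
The plan is to reduce the statement to the initial value problem \eqref{sysspec} furnished by Theorem \ref{SolThm} and to observe that its integration is insensitive to the sign of $2c - |\Wd|_{\sst0}^2$; consequently the closed forms coincide verbatim with those of Corollary \ref{CorAncient}, and only the analysis of the maximal interval of existence changes, being governed precisely by that sign. Throughout I write $W := |\Wd|_{\sst0}^2$ for brevity.

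First I would integrate \eqref{sysspec} by separation of variables. Setting $u(t) := 1 + \frac{c}{\Fz}\,a(t)$, so that $u(0) = 1$ and $\frac{d}{dt}u = \frac{c}{\Fz}\frac{d}{dt}a = c\,\Fz^2\, u^{\frac{W}{c}-1}$, the equation becomes $u^{1 - \frac{W}{c}}\,du = c\,\Fz^2\,dt$, and integrating from $0$ with $u(0)=1$ gives
\[
u(t) = \left[(2c - W)\,\Fz^2\, t + 1\right]^{\frac{c}{2c - W}},
\]
which is legitimate since $c \neq \tfrac12 W$ forces $2c - W \neq 0$. Recovering $a(t) = \frac{\Fz}{c}\bigl(u(t) - 1\bigr)$ reproduces the expression stated in Corollary \ref{CorAncient}. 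Substituting $u(t)$ into the formula \eqref{Fteq}, namely $F_t = \Fz\,u(t)^{\frac{W}{4c}}$, and into the Nijenhuis-norm formula $|N_{J_t}|_{g(t)}^2 = \frac{W}{2}\,u(t)^{\frac{W}{2c}-2}$ from the remark following Theorem \ref{SolThm}, then yields the expressions for $|\f(t)|_{g(t)}$ and $|N_{J_t}|_{g(t)}^2$. None of these substitutions involves the sign of $2c - W$, which is exactly why the three displayed quantities agree with those of Corollary \ref{CorAncient}; this justifies the first sentence of the statement.

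Next I would pin down the maximal interval. The hypothesis $c > \tfrac12 W$ gives $2c - W > 0$, so the base $(2c - W)\Fz^2 t + 1$ is positive exactly when $t > \frac{1}{(W - 2c)\Fz^2}$, and this threshold is negative. On this interval $u(t) > 0$, hence $F_t = \Fz\,u(t)^{\frac{W}{4c}} > 0$ and $\f(t)$ remains a positive, non-degenerate Type IIA geometry; as $t \to +\infty$ the base tends to $+\infty$ and (the exponent being positive) so does $u(t)$, so the flow persists for all large times, whereas as $t \to \bigl(\frac{1}{(W - 2c)\Fz^2}\bigr)^{+}$ the base tends to $0^{+}$ and $F_t \to 0$, so the structure degenerates. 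This identifies the maximal existence interval as $\left(\frac{1}{(W - 2c)\Fz^2}, +\infty\right)$ and shows $\f(t)$ is immortal.

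The one point that deserves care is the meaning of ``exists'': the solution of the underlying ODE may a priori extend past the positivity interval. Indeed, for $c = W$ --- the relation satisfied by every known immortal example (the Lie algebras $\frg_{5,1}\oplus\R$, $\frg_{6,N3}$ and $\frg_{6,38}^{0}$) --- the exponent $\frac{W}{c}-1$ vanishes and \eqref{sysspec} degenerates to $\frac{d}{dt}a = \Fz^3$, whose solution $a(t) = \Fz^3\,t$ is defined for all $t\in\R$. The left endpoint of the existence interval is therefore dictated not by breakdown of the ODE but by the geometric requirement that $\f(t)$ stay positive and non-degenerate, i.e. $F_t > 0$; this is the only genuine subtlety, the remaining steps being the routine integration and sign bookkeeping already indicated.
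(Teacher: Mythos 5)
Your proposal is correct and follows essentially the same route the paper intends: the paper states that the proofs of Corollaries \ref{CorEternal}--\ref{CorImmortal} ``follow from direct computations,'' i.e.\ separation of variables in \eqref{sysspec} followed by substitution into \eqref{Fteq} and the Nijenhuis-norm formula, and it settles the existence interval exactly as you do, via the remark after the corollaries that the maximal interval is the maximal connected interval containing $0$ on which $|\f(t)|_{g(t)}\neq 0$ (justified by \eqref{lemma4FPPZ}). Your closing observation that the ODE for $a(t)$ may extend past the left endpoint (e.g.\ when $c=|\Wd|_{\sst0}^2$) while the geometric structure degenerates there is precisely the subtlety the paper addresses in that remark, so nothing is missing.
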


In the previous corollaries, the maximal time interval where the solution $\f(t)$ exists is the maximal connected real interval containing $0$ where $|\f(t)|_{g(t)}$ is different from zero. 
Indeed, the general identity from \cite[Lemma 4]{FPPZ} recalled in \eqref{lemma4FPPZ} ensures that the primitive 3-form $\f(t)$ is positive as long as $|\f(t)|_{g(t)}$ does not pass through zero. 

\begin{remark}
The conclusion of Corollary \ref{CorAncient} when $c=\tfrac14|\Wd|^2_{\sst0}$ is consistent with the results of Theorem \ref{JHermFlow}, which was proved under weaker assumptions. 
\end{remark}

\smallskip

When $c \geq \frac12 |\Wd|_{\sst0}^2$, from corollaries \ref{CorEternal} and \ref{CorImmortal} we deduce the next result, which is consistent with the general result \cite[Cor.~2]{FPPZ}. 
\begin{corollary}
Let $\f(t)$ be the solution \eqref{solspec} to the source-free Type IIA flow starting at a special Type IIA geometry $(\omega,\fz)$ such that $c \geq \frac12 |\Wd|_{\sst0}^2$. 
Then, $\f(t)$ exists for all positive times and 
\[
\lim_{t\rightarrow+\infty} |N_{J_t}|_{g(t)}^2 = 0. 
\]
In particular, the symplectic form $\omega$ admits compatible almost complex structures with arbitrary small Nijenhuis tensor. 
\end{corollary}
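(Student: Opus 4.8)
The plan is to reduce the statement entirely to the explicit solutions already recorded in Corollaries \ref{CorEternal} and \ref{CorImmortal}, splitting according to whether $c=\tfrac12|\Wd|_{\sst0}^2$ or $c>\tfrac12|\Wd|_{\sst0}^2$. In both cases I first record that $c>0$ and $\Fz>0$: for a special SHF structure $\Wd$ is non-vanishing, so $|\Wd|_{\sst0}^2>0$, and the hypothesis gives $c\geq\tfrac12|\Wd|_{\sst0}^2>0$, while $\Fz=|\fz|_{\sst0}>0$ because $\fz$ is positive. These positivity facts are what make the relevant exponentials decay and the relevant bases diverge.

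In the borderline case $c=\tfrac12|\Wd|_{\sst0}^2$, Corollary \ref{CorEternal} tells me the solution is eternal, hence defined for all positive times, and that $|N_{J_t}|_{g(t)}^2=\tfrac{|\Wd|_{\sst0}^2}{2}\,e^{-c\Fz^2 t}$; since the exponent $-c\Fz^2$ is strictly negative, this tends to $0$ as $t\to+\infty$. In the case $c>\tfrac12|\Wd|_{\sst0}^2$, Corollary \ref{CorImmortal} gives existence on $\bigl(\tfrac{1}{(|\Wd|_{\sst0}^2-2c)\Fz^2},+\infty\bigr)$, and here the key observation is that $|\Wd|_{\sst0}^2-2c<0$ forces the left endpoint to be negative, so the interval contains all positive times. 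For the Nijenhuis tensor I invoke the formula from Corollary \ref{CorAncient}: the base $(2c-|\Wd|_{\sst0}^2)\Fz^2 t+1$ diverges to $+\infty$ (as $2c-|\Wd|_{\sst0}^2>0$), while the exponent $\tfrac{|\Wd|_{\sst0}^2-4c}{2(2c-|\Wd|_{\sst0}^2)}$ has negative numerator (since $c>\tfrac12|\Wd|_{\sst0}^2$ yields $4c>|\Wd|_{\sst0}^2$) and positive denominator, hence is itself negative; a divergent base raised to a negative power tends to $0$. This settles $\lim_{t\to+\infty}|N_{J_t}|_{g(t)}^2=0$ in both cases, recovering the general result \cite[Cor.~2]{FPPZ} for our explicit family.

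For the final assertion I would note that each $J_t$ is, by construction, an almost complex structure compatible with the \emph{fixed} symplectic form $\omega$: it arises from the SU(3)-structure $(\omega,\psip_t)$, so $\omega$ is of type $(1,1)$ and positive with respect to $J_t$, and $g(t)=\omega(\cdot,J_t\cdot)$ is its associated Riemannian metric. Crucially, the volume form is preserved along the flow, $\vol_{g(t)}=\tfrac16\omega^3=\vol_{g_0}$, so the norms $|N_{J_t}|_{g(t)}$ are all computed against metrics of one and the same volume; their decay to $0$ is therefore genuine and not an artifact of rescaling. Letting $t\to+\infty$ thus exhibits $\omega$-compatible almost complex structures whose Nijenhuis tensors have arbitrarily small norm. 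The computations here are elementary, and the only point deserving care is the correct interpretation of ``arbitrarily small Nijenhuis tensor'' when the background metric varies; the conceptual safeguard is precisely the volume-preservation of $g(t)$, and this is the one place where I would make the normalization explicit to forestall a reader's objection.
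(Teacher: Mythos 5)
Your proposal is correct and follows essentially the same route as the paper, which deduces the corollary directly from Corollaries \ref{CorEternal} and \ref{CorImmortal} by checking the sign of the exponents and the existence intervals in the two cases $c=\tfrac12|\Wd|_{\sst0}^2$ and $c>\tfrac12|\Wd|_{\sst0}^2$. Your added remark on the volume normalization of $g(t)$ is a reasonable sanity check on the meaning of ``arbitrarily small Nijenhuis tensor'' but is not needed for, and does not alter, the argument.
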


From Proposition \ref{SpecialFR}, Proposition \ref{lemmapsimt} and Appendix \ref{appendix}, we see that  
Corollary \ref{CorEternal} holds for the examples of special SHF structures on the Lie algebras $A_{5,7}^{-1,-1,1}\oplus\R$, $A_{5,17}^{a,-a,1}\oplus\R$ and $\frg_{6,118}^{0,-1,-1}$,  
Corollary \ref{CorAncient} holds for the examples on $\fre(1,1)\oplus\fre(1,1)$ and $\frg_{6,54}^{0,-1}$, 
and Corollary \ref{CorImmortal} holds for the examples on $\frg_{5,1}\oplus\R$, $\frg_{6,N3}$ and $\frg_{6,38}^{0}$. 
As a consequence, all compact locally homogeneous spaces corresponding to these Lie algebras admit a solution to the Type IIA flow which is described by Theorem \ref{SolThm} and its corollaries.

\begin{remark}
The solution to the type IIA flow on the nilmanifold studied in  \cite[Sect.~9.3.2]{FPPZ} corresponds to the example of special SHF structure on the nilpotent Lie algebra $\frg_{5,1}\oplus\R$.  
In that case, $\Fz=2$ and $c =  |\Wd|_{\sst0}^2$, whence $a(t) = \Fz^3\, t = 8t$. 
\end{remark}

\bigskip\noindent
{\bf Acknowledgements.}
The author was supported by GNSAGA of INdAM, and by the project PRIN 2017  ``Real and Complex Manifolds: Topology, Geometry and Holomorphic Dynamics''.  
He would like to thank Anna Fino, Fabio Podest\`a, Luigi Vezzoni and Lucio Bedulli for useful conversations.

\appendix
\section{Examples of special SHF structures on Lie algebras}\label{appendix}
In this appendix, we describe the special SHF structures on unimodular Lie algebras considered in \cite{FiRa} and mentioned in Proposition \ref{SpecialFR}. 
For each Lie algebra $\frg$, we consider the structure equations given in Theorem \ref{solvableSHF} with respect to a basis $\mathcal{B}^* = (e^1,\ldots,e^6)$ of $\frg^*$, 
and we use this basis to write the forms $\omega$, $\psip$, $\Wd$ and the metric $g$. 
The expression of the almost complex structure $J$ can be deduced from the identity $\omega = g(J\cdot,\cdot)$, while the expression of $\psim$ can be obtained computing $\psim = *_g\psip$. 

According to \eqref{dWdexpr}, the Chevalley-Eilenberg differential of $\Wd = -*_g d *_g \psip$ is given by
\[
d\Wd = \frac{\left|\Wd\right|^2}{4}\,\psip +\gamma,
\]
where $\gamma\in \Lambda^3_{12}\frg^*$ is of the form $\gamma = S_*\psip$, for a certain $S\in\Sym^{\sst-}(\frg)$.  
We write both $\gamma$ and the matrix representation of $S$ with respect to the basis $\mathcal{B} = (e_1,\ldots,e_6)$.  
For the examples having $S\neq0$, we also specify a basis of $\frg^*$ that is adapted to the special SHF structure. \medskip

\begin{enumerate}[$\bullet$]

\item $\fre(1,1)\oplus\fre(1,1)	=	(0,-e^{13},-e^{12},0,-e^{46},-e^{45})$:  \vspace{0.1cm}
\begin{enumerate}[]
\item $\omega	= e^{14}+e^{23}+2\,e^{56}, \quad   \psip	= e^{125}-e^{126}-e^{135}-e^{136}+e^{245}+e^{246}+e^{345}-e^{346}, $ \vspace{0.1cm}
\item $g	= (e^1)^2+(e^2)^2+(e^3)^2+(e^4)^2+2\,(e^5)^2+2\,(e^6)^2,$ \vspace{0.1cm}
\item $\Wd= 2\left(e^{26}+e^{25}+e^{36}-e^{35}\right),$ \vspace{0.1cm}
\item $\gamma	= 0,\quad  S = 0.$ 
\end{enumerate}
\medskip

\item $\frg_{5,1}\oplus\R	=	(0,0,0,0,e^{12},e^{13})$: 
\vspace{0.1cm}
\begin{enumerate}[]
\item $\omega	= e^{14} +e^{26}+e^{35}, \quad \psip	= e^{123}+e^{156}+e^{245}-e^{346},$ \vspace{0.1cm}
\item $g	= (e^1)^2+(e^2)^2+(e^3)^2+(e^4)^2+(e^5)^2+(e^6)^2,$ \vspace{0.1cm}
\item $\Wd = e^{26}-e^{35},$ \vspace{0.1cm}
\item $\gamma	= \tfrac32\,e^{123} -\tfrac12\,e^{156}-\tfrac12\,e^{245} +\tfrac12\,e^{346}, \quad S= \diag\left(-\tfrac12,-\tfrac12,-\tfrac12,\tfrac12,\tfrac12,\tfrac12\right),$ \vspace{0.1cm}
\item adapted basis $\left(e^1,e^4,-e^3,-e^5,e^2,e^6\right)$.
\end{enumerate} 
\medskip

\item $A_{5,7}^{-1,-1,1}\oplus\R	= (e^{15},-e^{25},-e^{35},e^{45},0,0)$:
\vspace{0.1cm}
\begin{enumerate}[]
\item $\omega	=  -e^{13}+e^{24}+e^{56}, \quad \psip	= -e^{126}-e^{145}-e^{235}-e^{346},$ \vspace{0.1cm}
\item $g	= (e^1)^2+(e^2)^2+(e^3)^2+(e^4)^2+(e^5)^2+(e^6)^2,$ \vspace{0.1cm}
\item $\Wd = 2\left(e^{14}-e^{23}\right), $ \vspace{0.1cm}
\item $\gamma	= 2\left(e^{126}-e^{145}-e^{235}+e^{346}\right), \quad S	= \diag\left(0,0,0,0,-2,2\right),$\vspace{0.1cm}
\item adapted basis $\left(e^3,e^1,e^2,e^4,e^5,e^6 \right)$. 
\end{enumerate}
\medskip

\item $A_{5,17}^{a,-a,1}\oplus\R = (a e^{15}+e^{25},-e^{15}+a e^{25},-a e^{35}+e^{45},-e^{35}-a e^{45},0,0),~a>0$:
\vspace{0.1cm}
\begin{enumerate}[]
\item $\omega	= e^{13}+e^{24}+e^{56}, \quad \psip	=  e^{125}-e^{146}+e^{236}-e^{345},$ \vspace{0.1cm}
\item $g	= (e^1)^2+(e^2)^2+(e^3)^2+(e^4)^2+(e^5)^2+(e^6)^2,$ \vspace{0.1cm}
\item $\Wd = -2a\left(e^{12}+e^{34}\right),$ \vspace{0.1cm}
\item $\gamma	= 2a^2\left(e^{125} + e^{146} - e^{236} - e^{345} \right), \quad S =\diag\left(0,0,0,0,-2a^2,2a^2\right),$ \vspace{0.1cm}
\item adapted basis $\left(e^1,e^3,e^2,e^4,e^5,e^6 \right)$. 
\end{enumerate}
\medskip

\item $\frg_{6,N3}	= (0,0,0,e^{12},e^{13},e^{23})$: 
\vspace{0.1cm}
\begin{enumerate}[]
\item $\omega	= 2\, e^{16}+e^{25}-e^{34}, \quad \psip = -e^{123}+e^{145}-2e^{246}-2\,e^{356},$ \vspace{0.1cm}
\item $g	= (e^1)^2+(e^2)^2+(e^3)^2+(e^4)^2+(e^5)^2+4\,(e^6)^2,$ \vspace{0.1cm}
\item $\Wd = 4\,e^{16}-e^{25}+e^{34},$ \vspace{0.1cm}
\item $\gamma	= -\tfrac92\,e^{123} - \tfrac32\, e^{145} - \tfrac32\,e^{246}-\tfrac32\,e^{356}, \quad S	= \diag\left(-\tfrac32,-\tfrac32,-\tfrac32,\tfrac32,\tfrac32,\tfrac32 \right),$ \vspace{0.1cm}
\item adapted basis $\left(e^1,2\,e^6,e^3,-e^4,e^2,e^5 \right)$.
\end{enumerate}
\medskip

\item $\frg_{6,38}^{0}		= (e^{23},-e^{36},e^{26},e^{26}-e^{56},e^{36}+e^{46},0)$:
\vspace{0.1cm}
\begin{enumerate}[]
\item $\omega	=  -2\,e^{16}+e^{34}-e^{25}, \quad \psip =  -2\,e^{135}-2\,e^{124}+e^{236}-e^{456}, $ \vspace{0.1cm}
\item $g	= 4\,(e^1)^2+(e^2)^2+(e^3)^2+(e^4)^2+(e^5)^2+(e^6)^2, $ \vspace{0.1cm}
\item $\Wd = 4\,e^{16}-e^{25}+e^{34},$ \vspace{0.1cm}
\item $\gamma	= 3\,e^{124}+3\,e^{135} +\tfrac92\,e^{236}+\tfrac32\,e^{456}, \quad S = \diag\left(\tfrac32,-\tfrac32,-\tfrac32,\tfrac32,\tfrac32,-\tfrac32 \right),$ \vspace{0.1cm}
\item adapted basis $\left(e^6,2\,e^1,e^3,e^4,-e^2,e^5 \right)$.
\end{enumerate}
\medskip

\item $\frg_{6,54}^{0,-1}	= (e^{16} + e^{35}, -e^{26} + e^{45}, e^{36}, -e^{46}, 0, 0)$: 
\vspace{0.1cm}
\begin{enumerate}[]
\item $\omega	= e^{14}+e^{23}+\sqrt{2}\,e^{56}, \quad \psip = e^{125}-\sqrt{2}\,e^{136}+\sqrt{2}\,e^{246}+e^{345},$ \vspace{0.1cm}
\item $g	= (e^1)^2+(e^2)^2+(e^3)^2+(e^4)^2+(e^5)^2+2\,(e^6)^2,$ \vspace{0.1cm}
\item $\Wd =  \sqrt{2}\,e^{13}-e^{14}+e^{23}+\sqrt{2}\,e^{24},$ \vspace{0.1cm}
\item $\gamma	= -\tfrac32\,e^{125} - \tfrac{\sqrt{2}}{2}\, e^{136} +  \tfrac{\sqrt{2}}{2}\, e^{246} + \tfrac12\,e^{345}, \quad S = \diag\left(\tfrac12,\tfrac12,-\tfrac12,-\tfrac12,\tfrac12,-\tfrac12 \right),$ \vspace{0.1cm}
\item adapted basis $\left(e^1,e^4,e^2,e^3,e^5,\sqrt{2}\,e^6 \right)$.
\end{enumerate}
\medskip

\item $\frg_{6,118}^{0,-1,-1}	= (-e^{16} +e^{25},-e^{15} -e^{26}, e^{36} -e^{45}, e^{35} +e^{46}, 0, 0)$:
\vspace{0.1cm}
\begin{enumerate}[]
\item $\omega	= e^{14}+e^{23}-e^{56},  \quad \psip	= e^{126}-e^{135}+e^{245}+e^{346},$ \vspace{0.1cm}
\item $g = (e^1)^2+(e^2)^2+(e^3)^2+(e^4)^2+(e^5)^2+(e^6)^2,$ \vspace{0.1cm}
\item $\Wd = 2\left(e^{12}-e^{34}\right),$ \vspace{0.1cm}
\item $\gamma	= 2\left(e^{126} + e^{135} - e^{245} + e^{346} \right), \quad  S= \diag(0,0,0,0,2,-2),$ \vspace{0.1cm}
\item adapted basis $\left(e^1,e^4,e^3,-e^2,-e^5,e^6 \right)$.
\end{enumerate}

\end{enumerate}


\end{document}